\newtheorem{theorem}{Theorem}[section]
\newtheorem{lemma}[theorem]{Lemma}
\newtheorem{corollary}[theorem]{Corollary}
\newenvironment{proof}[1][Proof]{\begin{trivlist}
\item[\hskip \labelsep {\bfseries #1}]}{\end{trivlist}}
\newcommand{\qed}{\nobreak \ifvmode \relax \else
      \ifdim\lastskip<1.5em \hskip-\lastskip
      \hskip1.5em plus0em minus0.5em \fi \nobreak
      \vrule height0.75em width0.5em depth0.25em\fi}
\title{Variational Multiscale Modeling with Discontinuous Subscales:\\ Analysis and Application to Scalar Transport}
\author{Christopher Coley and John A. Evans$^{*}$ \\
\textit{\small Ann and H.J. Smead Department of Aerospace Engineering Sciences, University of Colorado Boulder}\\
$^*$ \small Corresponding author.  \textit{E-mail address:} john.a.evans@colorado.edu}
\date{}
\begin{document}

\maketitle

\section*{Abstract}

\noindent We examine a variational multiscale method in which the unresolved fine-scales are approximated element-wise using a discontinuous Galerkin method.  We establish stability and convergence results for the methodology as applied to the scalar transport problem, and we prove that the method exhibits optimal convergence rates in the SUPG norm and is robust with respect to the P\'{e}clet number if the discontinuous subscale approximation space is sufficiently rich.  We apply the method to isogeometric NURBS discretizations of steady and unsteady transport problems, and the corresponding numerical results demonstrate that the method is stable and accurate in the advective limit even when low-order discontinuous subscale approximations are employed.\\

\section{Introduction}

Multiscale phenomena are ubiquitous in science and engineering applications.  For instance, turbulent fluid flows are characterized by a continuum of spatial and temporal scales, and even laminar fluid flows exhibit multiscale behavior in the form of boundary and shear layers.  Due to the widespread presence and impact of multiscale phenomena, there is a great demand for numerical methods which account for multiscale effects on the numerical solution.

The variational multiscale method was originally introduced as a theoretical framework for incorporating missing fine-scale effects into numerical problems governing coarse-scale behavior \cite{Bazilevs07b,Hughes95a,Hughes98,Hughes99,Hughes07}.  Construction of the variational multiscale method is simple: decompose the solution to a partial differential equation into a sum of coarse-scale and fine-scale components, determine the fine-scale component analytically in terms of the coarse-scale component, and solve for the coarse-scale component numerically.  The above scale decomposition is uniquely specified by identifying a projector from the space of all scales onto the coarse-scale subspace.  As a consequence, the coarse-scale component is guaranteed to best-fit the solution in a variational sense.

The primary challenge in the variational multiscale method is determining the fine-scale component in terms of the coarse-scale component.  Namely, the problem governing the behavior of the fine-scales is infinite-dimensional and nearly as difficult to solve as the original problem of interest.  Consequently,  in practice, the fine-scale problem must be approximated.  The simplest approximations are based on algebraic models which express the fine-scale component in terms of an algebraic expression of the coarse-scale residual.  In fact, classical stabilization approaches such as the streamline upwind Petrov Galerkin (SUPG) method \cite{Brooks82}, the Galerkin Least Squares (GLS) method \cite{Hughes86,Hughes89}, and the Douglas-Wang method \cite{Douglas89} may be viewed as algebraic models.  While these models have proven to be quite successful in capturing the mean effect of the unresolved fine-scales on coarse-scale behavior, they typically are unable to account for higher-order moments of fine-scale components \cite{Calo05}.  For example, it has been shown that algebraic models yield inaccurate representations of the subgrid stress tensor for turbulent incompressible flows \cite{Wang10}.

A more accurate approximation of the fine-scale problem may be obtained using differential models wherein a model differential equation is solved for the fine-scale component.  Perhaps the simplest differential model is the method of residual-free bubbles \cite{Brezzi92,Brezzi97,Brezzi94,Brezzi02,Sangalli03}.  In this approach, the fine-scale solution space is approximated by the space of bubbles over each element.  This yields an element-wise problem which can then be solved for the fine-scale solution field over the element.  It should be noted that these element-wise problems are still infinite-dimensional, so they must in turn be solved using a numerical method.  Moreover, the fine-scale solution over each element exhibits multiscale features including the presence of boundary layers over element boundaries, so any proposed method must be able to account for such features.  In prior work, researchers have turned to Galerkin's method to solve these problems with either (i) polynomial bubble functions and subgrid viscosity \cite{Brezzi00,Guermond99} or (ii) piecewise polynomial functions over Shiskin submeshes \cite{Brezzi03,Brezzi98}.  The first approach leads to a simple implementation, though the exact value of the required subgrid viscosity is problem-dependent and polynomial bubble functions are unable to accurately represent layers.  The second approach leads to a more complex implementation, but it is better able to accurately capture boundary layer phenomena.

In this paper, we examine an alternative approach, which we refer to henceforth as the method of discontinuous subscales, in which the fine-scales in the method of residual-free bubbles are approximated element-wise using a discontinuous Galerkin method \cite{Arnold02}.  As the discontinuous subscales are not required to meet the residual-free bubble boundary conditions in a strong sense, they are able to represent layers without resorting to a complicated submesh.  Herein, we employ the symmetric interior penalty method to approximate diffusive fluxes and the upwind method to approximate advective fluxes on element boundaries in the element-wise fine-scale problems \cite{Arnold82,Shu2009}.  This yields a stable methodology for transport and incompressible fluid flow even in the advective limit, though it should be mentioned that alternative discontinuous Galerkin methods may be employed \cite{Cockburn98,Cockburn01,Nguyen09}.  To examine the effectiveness of the method of discontinuous subscales, we conduct a theoretical stability and convergence analysis of the method as applied to the scalar transport problem.  Though the scalar transport problem is linear, it exhibits multiscale behavior in the form of boundary and internal layers and thus serves as a simplified vehicle to study the more complicated Navier-Stokes equations.  Through our analysis, we find that the method of discontinuous subscales exhibits optimal convergence rates and is robust with respect to the P\'{e}clet number if the discontinuous subscale approximation space is sufficiently rich.  We further explicitly identify the required richness in the context of affine finite elements.  We finish this paper by applying the method to isogeometric discretizations \cite{Hughes05} of steady and unsteady transport problems.  Our motivation for examining isogeometric discretizations is that they exhibit improved stability and accuracy properties as compared with classical finite elements when applied to difficult transport \cite{Bazilevs06}, laminar and turbulent fluid flow \cite{Bazilevs07b,Evans13a,Evans13b,vanOpstal17}, and fluid-structure interaction \cite{Bazilevs08} applications.  Surprisingly, we find that the method of discontinuous subscales is stable and accurate in the advective limit even when lowest-order discontinuous subscale approximations are employed as a companion fine-scale model for an isogeometric discretization.  

It should be mentioned that the method of discontinuous subscales is related to many other multiscale methods in the literature, and in particular, it is closely aligned with the discontinuous residual-free bubble method of Sangalli \cite{Sangalli04}, the multiscale discontinuous Galerkin method of Hughes \textit{et al.} \cite{Bochev06,Buffa06,Hughes06}, and the parameter-free variational multiscale method of Cottrell \cite{Cottrell07}.  In the discontinuous residual-free bubble method, one also employs a discontinuous Galerkin method to approximate fine-scale solution behavior, though alternative boundary conditions are employed at the element boundaries.  Moreover, no theoretical stability or convergence results exist for the method.  In the multiscale discontinuous Galerkin method, variational multiscale analysis and an interscale transfer operator are employed to essentially reduce the computational complexity of a discontinuous Galerkin method to that of a continuous Galerkin method.  While the multiscale discontinuous Galerkin method has a firm theoretical foundation, it is fairly difficult to implement, and it is not possible to employ different polynomial degrees for the coarse- and fine-scale components in the method.  The parameter-free variational multiscale method is perhaps most closely aligned with the method of discontinuous subscales.  In this approach, the discontinuous Galerkin method is employed to approximate the so-called fine-scale Green's function as opposed to the residual-free bubble directly.  No theoretical stability or convergence results exist for the parameter-free variational multiscale method, though we believe that the analysis carried out here can be easily extended to this approach.

An outline of the remainder of the paper is as follows.  In Section 2, we provide an overview of the scalar transport problem.  In Section 3, we introduce the variational multiscale approach, the method of residual-free bubbles, and the method of discontinuous subscales.  In Section 4, we present a theoretical analysis of the method of discontinuous subscales, and in Section 5, we apply the method to a selection of steady and unsteady scalar transport problems.  Finally, in Section 6, we provide concluding remarks.

\section{The Governing Equations of Scalar Transport}
We limit our discussion in this paper to the scalar transport problem, also referred to as the advection-diffusion or drift-diffusion problem in the literature.  Though we consider both steady and unsteady transport throughout, we only state the strong and weak forms of the unsteady problem here as the corresponding forms for the steady problem are implied.  With the above in mind, the strong form of the unsteady scalar transport problem reads as follows: Find $u: \overline{\Omega} \times \left[0,T\right) \rightarrow \mathbb{R}$ such that:
\begin{equation}\label{eq:ad_strong}
\begin{aligned}
\frac{\partial u}{\partial t} + \bm{a} \cdot \nabla u - \nabla \cdot \left( \kappa \nabla u \right)  & = f & \text{in}\ \Omega \times \left(0,T\right) \\
u & = g & \text{on}\ \Gamma_D \times \left(0,T\right) \\
\kappa \nabla u \cdot \bm{n} & = h & \text{on}\ \Gamma_N \times \left(0,T\right) \\
u & = u_0 & \text{in}\ \Omega \times \{0\}
\end{aligned}
\end{equation}
where $\Omega \subset \mathbb{R}^d$ is the spatial domain of the problem, $d$ is the number of spatial dimensions, $\Gamma$ is the boundary of the domain, $\Gamma_D$ is the Dirichlet part of the boundary, $\Gamma_N$ is the Neumann part of the boundary, $\bm{n}$ is the unit outward boundary normal, $u$ is the scalar being transported,  $\bm{a}: \Omega \rightarrow \mathbb{R}^d$ is the advective velocity, $\kappa: \Omega \rightarrow \mathbb{R}_+$ is the diffusivity, $f: \Omega \rightarrow \mathbb{R}$ is the applied forcing, $g: \Gamma_D \rightarrow \mathbb{R}$ is the prescribed Dirichlet data, $h: \Gamma_N \rightarrow \mathbb{R}$ is the prescribed Neumann data, and $u_0 : \Omega \rightarrow \mathbb{R}$ is the prescribed initial data.  In order for the scalar transport problem to be well-posed, we require that $\Gamma = \overline{\Gamma_D \cap \Gamma_N}$ and that $a_n = \bm{a} \cdot \bm{n} \geq 0$ on $\Gamma_N$. Note that to simplify our presentation, we have assumed that the advective velocity, diffusivity, forcing, and prescribed data are all independent of time.  We further assume throughout that these quantities are all smooth and that the advective velocity is divergence-free.

To establish a weak form of the unsteady scalar transport problem, we must first define a test space and set of trial functions.  With this in mind, let:
\begin{equation}
\mathcal{V} := \left\{ v \in H^1\left(\Omega\right): v|_{\Gamma_D} = 0 \right\}
\end{equation}
and:
\begin{equation}
\mathcal{S} := \left\{ u \in H^1\left(\Omega\right): u|_{\Gamma_D} = g \right\}
\end{equation}
denote the time-instaneous test space and set of trial functions, respectively, where $H^1\left(\Omega\right)$ is the Sobolev space of square-integrable functions with square-integrable derivatives.  We further define the space-time set of trial functions as:
\begin{equation}
\mathcal{S}_T := C([0,T];\mathcal{S})
\end{equation}
which is the set of continuous functions $u: [0,t] \rightarrow \mathcal{S}$ with:
$$\max_{0 \leq t \leq T} \| u(t) \|_{H^1(\Omega)} < \infty.$$
The weak form of the unsteady advection-diffusion problem then reads as follows: Find $u \in \mathcal{S}_T$ such that $u(\cdot,0) = u_0(\cdot)$ and:
\begin{equation}\label{eq:ad_weak}
\begin{aligned}
\int_\Omega \frac{\partial u}{\partial t} v -  \int_\Omega u \bm{a} \cdot \nabla v +  \int_\Omega \kappa \nabla u \cdot \nabla v + \int_{\Gamma_N} a_n u v = \int_\Omega f v + \int_{\Gamma_N} h v
\end{aligned}
\end{equation}
\noindent for all $v \in \mathcal{V}$ and almost every $t \in (0,T)$.  It is well-known that the strong and weak forms of the advection-diffusion problem admit a unique solution that depends smoothly on the prescribed data \cite{Evans10}.

To simplify our later discussion, we define the following operator:
\begin{equation}\label{eq:l_operators}
\begin{aligned}
\mathcal{L}_t & := \frac{\partial}{\partial t} + \bm{a} \cdot \nabla - \nabla \cdot \kappa \nabla
\end{aligned}
\end{equation}
Note that $\mathcal{L}_t$ is precisely the differential operator associated with the unsteady scalar transport problem.

\section{Variational Multiscale Analysis, Residual-Free Bubbles, and the Method of Discontinuous Subscales}
Now that we have defined the governing equations for scalar transport, we are ready to present our numerical method.  Before doing so, however, we first review the variational multiscale method followed by the residual-free bubble method.

\subsection{The Variational Multiscale Method}
In the variational multiscale method, the solution to a partial differential equation is split into a finite-dimensional, coarse-scale component and an infinite-dimensional, fine-scale component through the use of variational projection \cite{Hughes07}.  As such, the coarse-scale component is \textit{a priori} guaranteed to best-fit the exact solution in a variational sense.

To proceed forward, we must first define a finite-dimensional, coarse-scale test space $\bar{\mathcal{V}} \subset \mathcal{V}$ and a corresponding continuous, linear projection operator $\mathcal{P}: \mathcal{V} \rightarrow \bar{\mathcal{V}}$.  The projection operator naturally splits the test space into coarse-scale and fine-scale components as exhibited by the decomposition:
$$\mathcal{V} = \bar{\mathcal{V}} \oplus \mathcal{V}'$$
where $\mathcal{V}' = \ker(\mathcal{P})$ is the infinite-dimensional, fine-scale test space.  Consequently, each test function $v \in \mathcal{V}$ is uniquely represented as the sum of a coarse-scale test function $\bar{v} = \mathcal{P}v \in \bar{\mathcal{V}}$ and a fine-scale test function $v' = v - \bar{v} \in \mathcal{V}$.  Associated with the coarse-scale test space is a corresponding set of trial functions of the form $\bar{\mathcal{S}} = \bar{g} + \bar{\mathcal{V}}$ where $\bar{g} \in \mathcal{S}$ satisfies the non-homogeneous Dirichlet boundary condition $\bar{g}|_{\Gamma_D} = g$.  This inspires a similar split of the set of trial functions into coarse-scale and fine-scale components:
$$\mathcal{S} = \bar{\mathcal{S}} + \mathcal{S}'$$
where $\mathcal{S}'$ is the infinite-dimensional set of fine-scale trial functions.  Therefore, the solution to the scalar transport problem $u \in \mathcal{S}$ is uniquely represented as the sum of a coarse-scale component $\bar{u} = \bar{g} + \mathcal{P}u \in \bar{\mathcal{S}}$ and a fine-scale component $u' = u - \bar{u} \in \mathcal{S}'$.  Since the coarse-scale trial functions satisfy the required non-homogeneous boundary condition, the fine-scale trial functions satisfy homogeneous boundary conditions, and thus we have $\mathcal{S}' = \mathcal{V}'$.  

Heretofore, we have discussed how to split the solution to the scalar transport problem into coarse-scale and fine-scale components, but we have not discussed how one may obtain said components via a numerical method.  To do so, we simply use the decomposition $\mathcal{V} = \bar{\mathcal{V}} \oplus \mathcal{V}'$ and bilinearity to perform a scale splitting of the scalar transport problem.  The corresponding variational problem takes the form: Find $\bar{u} \in C\left([0,T];\bar{\mathcal{S}}\right)$ and $u' \in C\left([0,T];\mathcal{S}'\right)$ such that:
\begin{equation}\label{eq:ad_coarse}
\begin{aligned}
\int_\Omega \frac{\partial \left(\bar{u} + u'\right)}{\partial t} \bar{v} -  \int_\Omega \left(\bar{u} + u'\right) \bm{a} \cdot \nabla \bar{v} +  \int_\Omega \kappa \nabla \left(\bar{u} + u'\right) \cdot \nabla \bar{v} + \int_{\Gamma_N} a_n \left(\bar{u} + u'\right) \bar{v} = \int_\Omega f \bar{v} + \int_{\Gamma_N} h \bar{v}
\end{aligned}
\end{equation}
for all $\bar{v} \in \bar{\mathcal{V}}$ and almost every $t \in (0,T)$ and:
\begin{equation}\label{eq:ad_fine}
\begin{aligned}
\int_\Omega \frac{\partial \left(\bar{u} + u'\right)}{\partial t} v' -  \int_\Omega \left(\bar{u} + u'\right) \bm{a} \cdot \nabla v' +  \int_\Omega \kappa \nabla \left(\bar{u} + u'\right) \cdot \nabla v' + \int_{\Gamma_N} a_n \left(\bar{u} + u'\right) v' = \int_\Omega f v' + \int_{\Gamma_N} h v'
\end{aligned}
\end{equation}
for all $v' \in \mathcal{V}'$ and almost every $t \in (0,T)$.  Eq. \eqref{eq:ad_coarse} is referred to as the coarse-scale problem and Eq. \eqref{eq:ad_fine} is referred to as the fine-scale problem.  One can solve the fine-scale problem for $u' \in \mathcal{S}'$ in terms of the so-called coarse-scale residual:
\begin{equation}
\textup{Res}\left(\bar{u}\right) := f - \mathcal{L}_t \bar{u}
\end{equation}
and insert the resulting solution back into the coarse-scale problem in order to arrive at a final finite-dimensional system for the coarse-scale solution $\bar{u} \in \bar{\mathcal{S}}$.

The primary issue associated with the variational multiscale method is that $\mathcal{V}'$ is an infinite-dimensional space and thus solving the fine-scale problem is an intractable task.  Fortunately, for most problems of interest, it is sufficient to approximate the effect of the fine-scales on the coarse-scale solution in order to produce stable and accurate numerical solutions \cite{Cottrell07}.  With this in mind, we now turn our attention to differential approaches for modeling the fine-scale problem with specific reference to the method of residual-free bubbles.

\subsection{Differential Fine-Scale Modeling with Residual-Free Bubbles}

Assume that we have a decomposition of the spatial domain into a mesh of elements $\mathcal{M} = \left\{ \Omega_K \right\}_{K=1}^{n_{el}}$ satisfying $\overline{\cup_K \Omega_K} = \overline{\Omega}$.  We associate the decomposition with the finite element mesh in the finite element setting, and we associate the decomposition with the B\'{e}zier mesh in the context of isogeometric analysis \cite{Borden11}.  In the residual-free bubble approach \cite{Brezzi92,Brezzi97,Brezzi94}, we simply replace the fine-scale test space and set of trial functions in the variational multiscale method by the space of $H^1$-conforming bubbles over each element:
\begin{equation}
\mathcal{V}'_{bubble} = \mathcal{S}'_{bubble} := \left\{ v \in \mathcal{V}: v \in H^1_0(\Omega_K) \textup{ for each element } \Omega_K \subset \Omega \right\}.
\end{equation}
Since the functions in $\mathcal{V}'_{bubble}$ and $\mathcal{S}'_{bubble}$ are zero-valued on the boundary of each element, the fine-scale problem applies element-by-element.  Thus, we have:
\begin{equation}\label{eq:ad_split_RFB_element}
\int_{\Omega_K} \frac{\partial \left(\bar{u} + u'\right)}{\partial t} v' -  \int_{\Omega_K} \left(\bar{u} + u'\right) \bm{a} \cdot \nabla v' +  \int_{\Omega_K} \kappa \nabla \left(\bar{u} + u'\right) \cdot \nabla v' = \int_{\Omega_K} f v'
\end{equation}
for every $v' \in H^1_0(\Omega_K)$ and each element $\Omega_K \subset \Omega$. By integration-by-parts, the above implies the following element-wise problem:

\begin{dmath} \label{eq:element_fine_RFB}
\begin{aligned}
\mathcal{L}_t u'_K & = \textup{Res}\left(\bar{u}\right)\ \text{in}\ \Omega_K \\
u'_K & = 0\ \text{on}\ \Gamma_K,
\end{aligned}
\end{dmath}

\noindent where $u'_K = u'|_{\Omega_K}$ and  $\Gamma_K$ is the boundary of the element $\Omega_K$.  Residual-free bubbles are defined as the functions $u'_K$ which satisfy the above problem strongly.

Note that the governing equations for the residual-free bubbles are still infinite-dimensional.  While these equations are easier to solve than the fine-scale problem associated with the variational multiscale method, they are still nearly as difficult to solve as the original scalar transport problem.  Thus, in practice, one must turn to a numerical method to approximate the residual-free bubbles.  Additionally, the residual-free bubbles exhibit multiscale behavior such as boundary layers, so any method must be able to account for such behavior.  As mentioned in the introduction, a number of approaches have been proposed in prior work based on the continuous Galerkin method.  While these approaches have been shown to be closely related to the SUPG method, they exhibit complex features such as problem-dependent subgrid diffusivity \cite{Brezzi00} or advection-aligned Shishkin subgrid meshes \cite{Brezzi03} in order to overcome the advective instabilities associated with the continuous Galerkin method and account for the multiscale features in the residual-free bubbles.  We instead turn to another means of approximating the residual-free bubbles in the next subsection.

\subsection{Approximation of the Residual-Free Bubbles with Discontinuous Subscales}

Recall that the residual-free bubbles exhibit the same sorts of multiscale phenomena as the solution to the scalar transport problem,  including boundary layers along element boundaries.  This inspires us to weaken the enforcement of element-wise homogeneous Dirichlet boundary conditions via a discontinuous Galerkin methodology.  We refer to this approach as the method of discontinuous subscales as the corresponding residual-free bubble approximations are discontinuous between adjacent elements.

To set the stage, we need to first define a finite-dimensional test space and set of trial functions for the discontinuous subscales.  As with the method of residual-free bubbles, these two sets collapse to the same space, and we simply require that the test and trial functions are $H^1$-conforming over each element.  We denote the discontinuous subscale approximation space over element $\Omega_K$ as $\mathcal{V}'_{disc,K} = \mathcal{S}^'_{disc,K} \subset H^1\left(\Omega_K\right)$ and the approximation space over the entire domain as:
\begin{equation}
\mathcal{V}'_{disc} = \mathcal{S}'_{disc} := \left\{ v \in L^2(\Omega): v|_{\Omega_K} \in \mathcal{V}'_{disc,K} \textup{ for every } \Omega_K \in \mathcal{M} \right\}.
\end{equation}
There are many potential candidates for the discontinuous subscale approximation space, and we consider polynomial basis functions of a particular degree later in this paper.

It remains now to discretize the governing residual-free bubble equation using our discontinuous subscale approximation space.  We turn to a discontinuous Galerkin method in which the symmetric interior penalty method is employed to approximate the diffusive fluxes on element boundaries and the upwind method is employed to approximate the advective fluxes on element boundaries \cite{Shu2009}.  This approach is known to be stable even in the advective limit, and it results in the following element-wise problem:
\begin{equation}
\begin{aligned}
\int_{\Omega_K} \frac{\partial u'}{\partial t} v' -  \int_{\Omega_K} u' \bm{a} \cdot \nabla v' + \int_{\Omega_K} \kappa \nabla u' \cdot \nabla v' \\ + \int_{\Gamma_K} u' v' a_n^+ - \int_{\Gamma_K} \kappa \nabla u' \cdot \bm{n} v' - \int_{\Gamma_K} u' \kappa \nabla v' \cdot \bm{n} + \int_{\Gamma_K} \frac{C_{pen} \kappa}{h_K} u' v' &= \int_{\Omega_K} \textup{Res}\left(\bar{u}\right) v'
\end{aligned}
\end{equation}
for every $v' \in \mathcal{V}'_{disc,K}$ where $a^+_n = \left(\bm{a} \cdot \bm{n}\right)^+ = \max\left\{ \bm{a} \cdot \bm{n}, 0 \right\}$.  Above, $h_K$ indicates a measure of the size of element $\Omega_K$ (e.g., $h_K$ could be the diameter of the smallest ball encompassing the element), and $C_{pen} > 0$ is a stabilization parameter that must be specifically chosen such that the resulting methodology is stable.  It should be noted that the exact value of the stabilization parameter should depend on both the coarse-scale and discontinuous subscale approximation spaces, and we discuss how to choose the stabilization parameter in the next section.

At this juncture, we have arrived at a fully specified method.  However, we should highlight a few modifications that yield a more robust and streamlined methodology.  First of all, we have found that the optional inclusion of an artificial diffusivity $\kappa_{art}$ in the subscale governing equation generally improves stability as well as the conditioning of the resulting linear system.  Guidelines for how to choose the artificial diffusivity are provided in the next section.  Second of all, we have found that neglecting the influence of the subscale solution field in the unsteady and diffusive terms appearing in the coarse-scale governing equation also improves stability.  It should be mentioned that these terms also vanish if the projection operator is specially chosen as observed in previous works \cite{Hughes07}.  Finally, one may choose to ignore the time-history of the subscale solution field by removing the time-derivative of the subscales appearing in the subscale governing equation.  This approach, which we refer to as the quasi-static model, leads to a simpler implementation than the corrresponding dynamic model.  However, both the dynamic and quasi-static models exhibit nearly the same computational cost.  It should be noted that the dynamic and quasi-static labels are inspired by the Dynamic Subscales (DSS) and Quasi-Static Subscales (QSS) methods of Codina \textit{et al.} \cite{Codina07}, though the DSS and QSS methods do not attempt to solve for the residual-free bubbles.

Collecting all of our governing equations together, we obtain the following:\\

\noindent \textbf{Coarse-Scale Governing Equation:}
\begin{equation}
\begin{aligned}
\int_\Omega \frac{\partial \bar{u}}{\partial t} \bar{v} -  \int_\Omega \left(\bar{u} + u'\right) \bm{a} \cdot \nabla \bar{v} +  \int_\Omega \kappa \nabla \bar{u} \cdot \nabla \bar{v} + \int_{\Gamma_N} a_n \bar{u} \bar{v} = \int_\Omega f \bar{v} + \int_{\Gamma_N} h \bar{v} \\ \textup{ for all } \bar{v} \in \bar{\mathcal{V}}
\end{aligned}
\end{equation}

\noindent \textbf{Dynamic Discontinuous Subscale Model:}
\begin{equation}
\begin{aligned}
\int_{\Omega_K} \frac{\partial u'}{\partial t} v' -  \int_{\Omega_K} u' \bm{a} \cdot \nabla v' + \int_{\Omega_K} \left( \kappa + \kappa_{art} \right) \nabla u' \cdot \nabla v' \\ + \int_{\Gamma_K} u' v' a_n^+ - \int_{\Gamma_K} \kappa \nabla u' \cdot \bm{n} v' - \int_{\Gamma_K} u' \kappa \nabla v' \cdot \bm{n} + \int_{\Gamma_K} \frac{C_{pen} \kappa}{h_K} u' v' &= \int_{\Omega_K} \textup{Res}\left(\bar{u}\right) v' \\ & \textup{ for all } v' \in \mathcal{V}'_{disc,K} \textup{ and } \Omega_K \in \mathcal{M}
\end{aligned}
\end{equation}

\noindent \textbf{Quasi-Static Discontinuous Subscale Model:}
\begin{equation}
\begin{aligned}
\int_{\Omega_K} u' \bm{a} \cdot \nabla v' +  \int_{\Omega_K} \left(\kappa + \kappa_{art} \right) \nabla u' \cdot \nabla v' \cdot \bm{n}\\ + \int_{\Gamma_K} u' v' a_n^+ - \int_{\Gamma_K} \kappa \nabla u' \cdot \bm{n} v' - \int_{\Gamma_K} u' \kappa \nabla v' \cdot \bm{n} + \int_{\Gamma_K} \frac{C_{pen} \kappa}{h_K} u' v' &= \int_{\Omega_K} \textup{Res}\left(\bar{u}\right) v' \\ & \textup{ for all } v' \in \mathcal{V}'_{disc,K} \textup{ and } \Omega_K \in \mathcal{M}
\end{aligned}
\end{equation}

The governing semi-discrete equations may be discretized in time using any particular time-integrator of interest, yielding a linear system to be solved at each time-step.  Note that in both the dynamic and quasi-static discontinuous subscale models, the subscale solution fields on each element are decoupled.  Consequently, an element-wise static condensation procedure can be employed to remove the subscale degrees-of-freedom from the discrete system at each time-step, yielding a reduced linear system for the coarse-scale solution field \cite{Wilson74}.  Moreover, as the subscale solution fields on each element are decoupled, this reduced linear system has exactly the same sparsity pattern as that associated with either Galerkin's method or the SUPG method.

\section{Analysis of the Method of Discontinuous Subscales} \label{analysis}

\noindent Now that we have presented our methodology for solving the scalar transport problem, we establish stability and convergence results.  To simplify exposition, we deal solely with the steady advection-diffusion problem with homogeneous Dirichlet boundary conditions.  With this in mind, let us define the group vector space $\bm{\mathcal{V}}^h = \overline{\mathcal{V}} \times \mathcal{V}'_{disc}$, the group variables $\textbf{U}^h = \left( \bar{u}, u' \right) \in \bm{\mathcal{V}}^h$ and $\textbf{V}^h = \left( \bar{v}, v'\right) \in \bm{\mathcal{V}}^h$, the group bilinear form:
\begin{equation}
\mathbf{B}\left(\textbf{U}^h,\textbf{V}^h\right) := \bar{B}(\bar{u},\bar{v}) + \bar{C}(u',\bar{v}) + C'(\bar{u},v') + B'(u',v')
\end{equation}
where:
\begin{eqnarray}
\bar{B}(\bar{u},\bar{v}) & := & -\int_{\Omega} \bar{u} \bm{a} \cdot \nabla \bar{v} + \int_{\Omega} \kappa \nabla \bar{u} \cdot \nabla \bar{v} \nonumber \\
\bar{C}(u',\bar{v}) & := & -\sum_{K=1}^{n_{el}} \int_{\Omega_K} u' \bm{a} \cdot \nabla \bar{v} \nonumber \\
C'(\bar{u},v') & := & \sum_{K=1}^{n_{el}} \int_{\Omega_K} v' \left( \bm{a} \cdot \nabla \bar{u} - \nabla \cdot \left( \kappa \nabla \bar{u} \right) \right) \nonumber \\
B'(u',v') & := & \sum_{K=1}^{n_{el}} \left( -\int_{\Omega_K} u' \bm{a} \cdot \nabla v' + \int_{\Omega_K} \left( \kappa + \kappa_{art} \right) \nabla u' \cdot \nabla v' + \int_{\Gamma_K} u' v' a_n^+ \right. \nonumber \\
& & \left. - \int_{\Gamma_K} \kappa \nabla u' \cdot \bm{n} v' - \int_{\Gamma_K} u' \kappa \nabla v' \cdot \bm{n} + \int_{\Gamma_K} \frac{C_{pen} \kappa}{h_K} u' v' \right), \nonumber
\end{eqnarray}
and the group linear form:
\begin{equation}
\mathbf{L}\left(\textbf{W}^h\right) := \bar{L}(\bar{v}) + L'(v')
\end{equation}
where:
\begin{eqnarray}
\bar{L}(\bar{v}) & := & \int_{\Omega} f \bar{v} \nonumber \\
L'(v') & := & \int_{\Omega} f v' \nonumber.
\end{eqnarray}
With the above notation established, our discrete problem is as follows: Find $\textbf{U}^h \in \bm{\mathcal{V}}^h$ such that for all $\textbf{V}^h \in \bm{\mathcal{V}}^h$:
\begin{equation}
\mathbf{B}\left(\textbf{U}^h,\textbf{V}^h\right) = \mathbf{L}\left(\textbf{V}^h\right).
\end{equation}
Throughout, we assume that $\overline{\mathcal{V}}$ is a finite-dimensional space of continuous piecewise polynomial or tensor-product polynomial functions of degree $p$ defined over a given mesh $\mathcal{M}$ of simplices (triangles and tetrahedra) or parallelotopes (quadrilaterals and hexahedra).  We also assume that $\mathcal{V}'_{disc}$ is a finite dimensional-space of discontinuous piecewise polynomial or tensor-product polynomials of degree $p_f$ defined over the same mesh $\mathcal{M}$.  While our analysis only strictly covers the setting of affinely-mapped finite elements, it easily extends to the more general settings of curvilinear finite elements and isogeometric analysis.

Throughout this section, we make use of the classical Lebesgue spaces $L^q(D)$ endowed with the norm $\| \cdot \|_{L^q(D)}$ where $1 \leq q \leq \infty$ and $D \subset \mathbb{R}^d$ is a generic open domain for integer $d \geq 1$.  We will also utilize the Sobolev spaces $W^{k,q}(D)$ for $k$ a non-negative integer and $1 \leq q \leq \infty$, endowed with the norm:
\begin{equation}
\begin{aligned}
\| u \|_{k,q,D} &:=& \left( \sum_{\alpha_1 + \ldots + \alpha_d \leq k} \left\| \frac{\partial^{\alpha_1}}{\partial x^{\alpha_1}_1} \ldots \frac{\partial^{\alpha_d}}{\partial x^{\alpha_d}_d} u \right\|^2_{L^q(D)} \right)^{1/2}
\end{aligned}
\end{equation}
and semi-norm:
\begin{equation}
\begin{aligned}
| u |_{k,q,D} &:=& \left( \sum_{\alpha_1 + \ldots + \alpha_d = k} \left\| \frac{\partial^{\alpha_1}}{\partial x^{\alpha_1}_1} \ldots \frac{\partial^{\alpha_d}}{\partial x^{\alpha_d}_d} u \right\|^2_{L^q(D)} \right)^{1/2}.
\end{aligned}
\end{equation}
In the setting when $q = 2$, the Sobolev spaces $W^{k,q}(D)$ become $H^k(D)$ and we use the simplified notation $\| \cdot \|_{k,D} \equiv \| \cdot \|_{k,2,D}$ and $| \cdot |_{k,D} \equiv | \cdot |_{k,2,D}$.  Sobolev norms are defined over boundaries of open domains in an analogous manner.

To proceed forward, we must make certain assumptions regarding the form of the penalty constant $C_{pen}$ and the artificial diffusivity $\kappa_{art}$.  In particular, we assume the following:\\

\noindent \textbf{Assumption 1:} \textit{The penalty constant $C_{pen}$ satisfies $C_{pen} \geq 8 C_{trace}$ where $C_{trace} > 0$ is a sufficiently large positive constant such that:}
\begin{eqnarray}
\| \bar{v} \|^2_{0,\Gamma_K} \leq \frac{C_{trace}}{h_K} \| \bar{v} \|^2_{0,\Omega_K} \hspace{15pt} \textup{and} \hspace{15pt} \| \nabla \bar{v} \cdot \bm{n} \|^2_{0,\Gamma_K} \leq \frac{C_{trace}}{h_K} | \bar{v} |^2_{1,\Omega_K} \nonumber \\
\| v' \|^2_{0,\Gamma_K} \leq \frac{C_{trace}}{h_K} \| v' \|^2_{0,\Omega_K} \hspace{9pt} \textup{and} \hspace{9pt} \| \nabla v' \cdot \bm{n} \|^2_{0,\Gamma_K} \leq \frac{C_{trace}}{h_K} | v' |^2_{1,\Omega_K} \nonumber
\end{eqnarray}
\textit{for every $\bar{v} \in \overline{\mathcal{V}}$, $v' \in \mathcal{V}'_{disc}$, and element $\Omega_K \in \mathcal{M}$.}\\

\noindent \textbf{Assumption 2:} \textit{The artificial diffusivity $\kappa_{art}$ takes the form:}
\begin{equation}
\kappa_{art} = C_{art} \left( \textup{max}\left\{ \frac{\| \bm{a} \|_{0,\infty,\Omega_K}}{h_K}, \frac{C_{inv} \kappa}{h^2_K} \right \}\right)^{-1} \| \bm{a} \|^2_{\infty,\Omega_K} \nonumber
\end{equation}
\textit{where $C_{art} \geq 0$ is an arbitrary non-negative constant and $C_{inv} > 0$ is a sufficiently large positive constant such that:}
\begin{eqnarray}
| \bar{v} |^2_{1,\Omega_K} \leq \frac{C_{inv}}{h^2_K} \| \bar{v} \|^2_{0,\Omega_K} \hspace{15pt} \textup{and} \hspace{15pt} \| \Delta \bar{v} \|^2_{0,\Omega_K} \leq \frac{C_{inv}}{h^2_K} | \bar{v} |^2_{1,\Omega_K} \nonumber \\
| v' |^2_{1,\Omega_K} \leq \frac{C_{inv}}{h^2_K} \| v' \|^2_{0,\Omega_K}  \hspace{9pt} \textup{and} \hspace{9pt} \| \Delta v' \|^2_{0,\Omega_K} \leq \frac{C_{inv}}{h^2_K} | v' |^2_{1,\Omega_K} \nonumber
\end{eqnarray}
\textit{for every $\bar{v} \in \overline{\mathcal{V}}$, $v' \in \mathcal{V}'_{disc}$, and element $\Omega_K \in \mathcal{M}$.}\\

\noindent One can readily find the trace and inverse constants associated with Assumptions 1 and 2 by solving element-wise eigenproblems.  Alternatively, explicit bounds for these constants are available both in the setting of finite elements and isogeometric analysis \cite{Bazilevs06,Evans13,Harari92,Warburton03}.

The following result shows that the group bilinear form is coercive, and hence the group variable solution is unique.

\begin{theorem} \label{theorem1}
The coercivity result
\begin{equation}
\textup{\textbf{B}}(\textup{\textbf{V}}^h,\textup{\textbf{V}}^h) \geq \frac{1}{4} \left( \kappa | \bar{v} |^2_{1,\Omega} + \sum_{K=1}^{n_{el}} \left( \left( \kappa + \kappa_{art} \right) | v' |^2_{1,\Omega_K} + \frac{C_{pen}\kappa}{h_K} \| v' \|^2_{0,\Gamma_K} + \| a_n^{1/2} v' \|^2_{0,\Gamma_K} \right) \right) \nonumber
\end{equation}
holds for all $\textup{\textbf{V}}^h \in \bm{\mathcal{V}}^h$ provided Assumption 1 is satisfied.
\end{theorem}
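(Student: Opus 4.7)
The plan is to expand $\mathbf{B}(\mathbf{V}^h,\mathbf{V}^h)=\bar B(\bar v,\bar v)+\bigl[\bar C(v',\bar v)+C'(\bar v,v')\bigr]+B'(v',v')$, reduce the diagonal pieces to clean positive squares matching the target right-hand side, and then absorb the two remaining indefinite contributions -- the symmetric interior penalty consistency pair inside $B'$ and the VMS cross pair -- via Cauchy--Schwarz, the trace bounds of Assumption~1, and Young's inequality with carefully tuned weights.

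The reductions proceed in three short steps. For $\bar B(\bar v,\bar v)$, integrating by parts the advective term and using $\nabla\!\cdot\!\bm{a}=0$ with $\bar v|_\Gamma=0$ yields $\bar B(\bar v,\bar v)=\kappa|\bar v|^2_{1,\Omega}$. For $B'(v',v')$, applying the same integration by parts element-wise and combining the resulting $-\tfrac12\int_{\Gamma_K}a_n(v')^2$ with the upwind flux via $a_n^+-\tfrac12 a_n=\tfrac12|a_n|$ produces the advective positivity $\tfrac12\sum_K\|a_n^{1/2}v'\|^2_{0,\Gamma_K}$, leaving the positive pair $(\kappa+\kappa_{art})\sum_K|v'|^2_{1,\Omega_K}+\sum_K\tfrac{C_{pen}\kappa}{h_K}\|v'\|^2_{0,\Gamma_K}$ plus the indefinite SIP pair $-2\sum_K\int_{\Gamma_K}\kappa\nabla v'\cdot\bm{n}\,v'$. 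For the cross pair, the advective contributions of $\bar C$ and $C'$ cancel identically, collapsing the sum to the purely diffusive residue $-\sum_K\int_{\Omega_K}v'\,\nabla\!\cdot(\kappa\nabla\bar v)$, which I would then integrate by parts element-wise into a domain piece $\sum_K\int_{\Omega_K}\kappa\nabla v'\cdot\nabla\bar v$ and a boundary piece $-\sum_K\int_{\Gamma_K}v'\kappa\nabla\bar v\cdot\bm{n}$.

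Every remaining indefinite term now has the same structure, and I would bound each by Cauchy--Schwarz, the relevant trace inequality from Assumption~1, and Young's inequality with a free weight. The SIP pair gives a subtraction $\alpha\,\kappa|v'|^2_{1,\Omega_K}+\beta\,\tfrac{C_{pen}\kappa}{h_K}\|v'\|^2_{0,\Gamma_K}$ in which the optimal Young's weight ties $\alpha\beta=C_{trace}/C_{pen}$, so Assumption~1 allows any pair with $\alpha\beta\ge 1/8$; the cross-domain and cross-boundary pieces similarly produce bilinear subtractions spread among the three budgets (coarse $H^1$ semi-norm, fine $H^1$ semi-norm, penalty). Taking SIP weights $\alpha=\tfrac14,\beta=\tfrac12$, cross-domain Young's weight $\delta_1=1$, and cross-boundary weight consuming $\tfrac14$ of the penalty, one verifies that the total subtractions remove $3\kappa/4$ from the fine-scale semi-norm (leaving $\tfrac\kappa4+\kappa_{art}\ge\tfrac{\kappa+\kappa_{art}}4$), exactly $3/4$ of the penalty, and at most $5\kappa/8$ of the coarse diffusion (leaving $3\kappa/8\ge\kappa/4$), with the advective term untouched, yielding the asserted $1/4$ coercivity.

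The main obstacle is exactly this simultaneous satisfaction of three budgets -- coarse diffusion, fine diffusion, and penalty -- with Young's products that are fixed. A naive direct estimate of the cross term by $|\sum_K\int v'\,\Delta\bar v|\le\sum_K\|v'\|_{0,\Omega_K}\|\Delta\bar v\|_{0,\Omega_K}$ plus the inverse inequality from Assumption~2 would introduce an uncontrolled $\|v'\|^2_{0,\Omega_K}$ term that is not part of the coercivity norm and cannot be bounded by any element-wise Poincar\'e inequality (since the discontinuous subscales are not required to vanish on $\Gamma_K$), which is why the preliminary element-wise integration by parts on the diffusive cross residue is essential; conversely, the threshold $C_{pen}\ge 8C_{trace}$ in Assumption~1 is precisely sharp enough to close the three-budget balancing with surplus $1/4$ on every term.
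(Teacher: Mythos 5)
Your proposal is correct and is essentially the paper's own argument: the same scale-wise expansion with $\bar B(\bar v,\bar v)=\kappa|\bar v|^2_{1,\Omega}$, the same element-wise treatment of $B'(v',v')$ producing $\tfrac12\sum_K\|a_n^{1/2}v'\|^2_{0,\Gamma_K}$ plus the indefinite SIP pair, the same advective cancellation and element-wise integration by parts of the diffusive cross residue $-\sum_K\int_{\Omega_K}v'\,\nabla\cdot(\kappa\nabla\bar v)$, and the same Cauchy--Schwarz/trace/Young bookkeeping under $C_{pen}\geq 8C_{trace}$. Even your weight choices reproduce the paper's final budget (surviving coefficients $3/8$ on the coarse diffusion, $1/4$ on the fine diffusion and penalty terms, $1/2$ on the advective trace term), so the two proofs agree in substance as well as in outline.
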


\begin{proof}
For the coarse-scale bilinear form, we evaluate:
\begin{eqnarray}
\bar{B}(\bar{v},\bar{v}) & = & -\int_{\Omega} \bar{v} \bm{a} \cdot \nabla \bar{v} + \int_{\Omega} \kappa \nabla \bar{v} \cdot \nabla \bar{v} \nonumber \\
& = & - \int_{\Omega} \frac{1}{2} \nabla \cdot \left( \bm{a} \bar{v}^2 \right) + \kappa | \bar{v} |^2_{1,\Omega} \hspace{15pt} \textup{(by the product rule)} \nonumber \\
& = & \kappa | \bar{v} |^2_{1,\Omega}. \hspace{102pt} \textup{(by the divergence theorem)} \label{eq:th1_begin}
\end{eqnarray}
Similarly, for the fine-scale bilinear form, we evaluate:
\begin{eqnarray}
B'(v',v') & = & \sum_{K=1}^{n_{el}} \left( -\int_{\Omega_K} v' \bm{a} \cdot \nabla v' + \int_{\Omega_K} \left( \kappa + \kappa_{art} \right) \nabla v' \cdot \nabla v' + \int_{\Gamma_K} v' v' a_n^+ \right. \nonumber \\
& & \left. - \int_{\Gamma_K} \kappa \nabla v' \cdot \bm{n} v' - \int_{\Gamma_K} v' \kappa \nabla v' \cdot \bm{n} + \int_{\Gamma_K} \frac{C_{pen} \kappa}{h_K} v' v' \right) \nonumber \\
 & = & \sum_{K=1}^{n_{el}} \left( \left( \kappa + \kappa_{art} \right) | v' |^2_{1,\Omega_K} + \frac{C_{pen}\kappa}{h_K} \| v' \|^2_{0,\Gamma_K} \right. \nonumber \\
& & \left. -\int_{\Omega_K} \frac{1}{2} \nabla \cdot \left( \bm{a} \left(v'\right)^2 \right) + \int_{\Gamma_K} a_n^+ \left(v'\right)^2 - 2 \int_{\Gamma_K} \kappa \nabla v' \cdot \bm{n} v' \right) \nonumber \\
 & = & \sum_{K=1}^{n_{el}} \left( \left( \kappa + \kappa_{art} \right) | v' |^2_{1,\Omega_K} + \frac{C_{pen}\kappa}{h_K} \| v' \|^2_{0,\Gamma_K} \right. \nonumber \\
& & \left. -\int_{\Gamma_K} \frac{1}{2} a_n \left(v'\right)^2 + \int_{\Gamma_K} a_n^+ \left(v'\right)^2 - 2 \int_{\Gamma_K} \kappa \nabla v' \cdot \bm{n} v' \right) \nonumber \\
 & = & \sum_{K=1}^{n_{el}} \left( \left( \kappa + \kappa_{art} \right) | v' |^2_{1,\Omega_K} + \frac{C_{pen}\kappa}{h_K} \| v' \|^2_{0,\Gamma_K} + \frac{1}{2} \| a_n^{1/2} v' \|^2_{0,\Gamma_K} - 2 \int_{\Gamma_K} \kappa \nabla v' \cdot \bm{n} v' \right). \nonumber \\
\end{eqnarray}
For the bilinear forms coupling the coarse-scales and fine-scales, we evaluate:
\begin{eqnarray}
\bar{C}(v',\bar{v}) + C'(\bar{v},v') & = & -\sum_{K=1}^{n_{el}} \int_{\Omega_K} v' \nabla \cdot \left( \kappa \nabla \bar{v} \right) \nonumber \\
& = & \sum_{K=1}^{n_{el}} \left( \int_{\Omega_K} \kappa \nabla \bar{v} \cdot \nabla v' - \int_{\Gamma_K} \kappa \nabla \bar{v} \cdot \bm{n} v' \right).
\end{eqnarray}
To continue, we recognize that, for every element $\Omega_K \in \mathcal{M}$:
\begin{eqnarray}
\left|\int_{\Gamma_K} \kappa \nabla v' \cdot \bm{n} v' \right| \leq \frac{1}{2} \left( \frac{h_K}{4 C_{trace}} \kappa \| \nabla v' \cdot \bm{n} \|^2_{0,\Gamma_K} + \frac{4 C_{trace}}{h_K} \kappa \| v' \|^2_{0,\Gamma_K}  \right) \nonumber
\end{eqnarray}
by Young's inequality, and:
\begin{eqnarray}
\left|\int_{\Gamma_K} \kappa \nabla v' \cdot \bm{n} v' \right| \leq \frac{1}{2} \left( \frac{1}{4} \kappa | v' |^2_{1,\Omega_K} + \frac{4 C_{trace}}{h_K} \kappa \| v' \|^2_{0,\Gamma_K} \right)
\end{eqnarray}
by the trace inequality.  By a similar argument, we have:
\begin{eqnarray}
\left|\int_{\Gamma_K} \kappa \nabla \bar{v} \cdot \bm{n} v' \right| \leq \frac{1}{2} \left( \frac{1}{4} \kappa | \bar{v} |^2_{1,\Omega_K} + \frac{4 C_{trace}}{h_K} \kappa \| v' \|^2_{0,\Gamma_K} \right)
\end{eqnarray}
for every element $\Omega_K \in \mathcal{M}$.  Finally, by the triangle inequality, we have:
\begin{eqnarray}
\left| \int_{\Omega_K} \kappa \nabla \bar{v} \cdot \nabla v' \right| \leq \frac{1}{2} \left( \kappa | \bar{v} |^2_{1,\Omega_K} + \kappa | v' |^2_{1,\Omega_K} \right) \label{eq:th1_end}
\end{eqnarray}
for every element $\Omega_K \in \mathcal{M}$.  Collecting our results contained in \eqref{eq:th1_begin}-\eqref{eq:th1_end}, we obtain:
\begin{eqnarray}
\bar{B}(\bar{v},\bar{v}) & = & \kappa | \bar{v} |^2_{1,\Omega} \\
B'(v',v') & \geq & \sum_{K=1}^{n_{el}} \left( \left( \kappa + \kappa_{art} \right) | v' |^2_{1,\Omega_K} + \frac{C_{pen}\kappa}{h_K} \| v' \|^2_{0,\Gamma_K} + \frac{1}{2} \| a_n^{1/2} v' \|^2_{0,\Gamma_K} \right. \nonumber \\ && \left. - \frac{1}{4} \kappa | v' |^2_{1,\Omega_K} - \frac{4 C_{trace}}{h_K} \kappa \| v' \|^2_{0,\Gamma_K} \right) \\
\bar{C}(v',\bar{v}) + C'(\bar{v},v') & \geq & - \sum_{K=1}^{n_{el}} \frac{1}{2} \left( \kappa | \bar{v} |^2_{1,\Omega_K} + \kappa | v' |^2_{1,\Omega_K} \right. \nonumber \\ && \left. + \frac{1}{4} \kappa | \bar{v} |^2_{1,\Omega_K} + \frac{4 C_{trace}}{h_K} \kappa \| v' \|^2_{0,\Gamma_K} \right)
\end{eqnarray}
The desired expression follows by adding the above inequalities and invoking Assumption 1. \qed
\end{proof}

The coercivity result provided in Theorem 4.1 ensures that the group variable solution is unique, but it does not ensure that the corresponding methodology is stable.  In fact, note that in the limit $\kappa \rightarrow 0$, the coercivity result suggests that the methodology loses control of the coarse-scale solution entirely.  However, we are able to show that the methodology satisfies a different notion of stability, namely inf-sup stability, in this limit.  To proceed forward, let us define the norm:
\begin{eqnarray}
\| \textbf{V}^h \|^2_S &:=& \kappa | \bar{v} |^2_{1,\Omega} + \sum_{K=1}^{n_{el}} \left( \left( \kappa + \kappa_{art} \right) | v' |^2_{1,\Omega_K} + \frac{C_{pen}\kappa}{h_K} \| v' \|^2_{0,\Gamma_K} + \| a_n^{1/2} v' \|^2_{0,\Gamma_K} \right) \nonumber \\ && + \sum_{K=1}^{n_{el}} \| \tau_K^{1/2} \mathcal{P}'_K \left( \bm{a} \cdot \nabla \left(\bar{v} + v'\right) \right) \|^2_{0,\Omega_K}
\end{eqnarray}
for every $\textbf{V}^h \in \bm{\mathcal{V}}^h$ where $\mathcal{P}'_K$ is the $L^2$-projector onto the space of discontinuous subscales $\mathcal{V}'_{disc,K}$ on element $\Omega_K \in \mathcal{M}$ and $\tau_K$ is the element-wise constant:
\begin{equation}
\tau_K := C_{\tau} \left( \textup{max}\left\{ \frac{\| \bm{a} \|_{0,\infty,\Omega_K}}{h_K}, \frac{C_{inv} \kappa}{h^2_K} \right \}\right)^{-1} \nonumber
\end{equation}
where $C_{\tau} > 0$ is an arbitrary positive constant.  With the above norm defined, we have the following inf-sup stability result.

\begin{theorem}
The inf-sup stability result
\begin{equation}
\inf_{\textup{\textbf{U}}^h \in \bm{\mathcal{V}}^h} \sup_{\textup{\textbf{V}}^h \in \bm{\mathcal{V}}^h} \frac{\textup{\textbf{B}}(\textup{\textbf{U}}^h,\textup{\textbf{V}}^h)}{ \| \textup{\textbf{U}}^h \|_S \| \textup{\textbf{V}}^h \|_S } \geq \beta > 0 \nonumber
\end{equation}
holds provided Assumptions 1 and 2 are satisfied where $\beta$ is a positive constant independent of the problem parameters $\bm{a}$ and $\kappa$ and the mesh size $h = \max_{K} h_K$.
\end{theorem}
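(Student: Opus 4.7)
The plan is to combine the coercivity result of Theorem \ref{theorem1} with the classical Schatz-like trick for inf-sup stability in SUPG-type analyses: given $\textbf{U}^h = (\bar{u}, u') \in \bm{\mathcal{V}}^h$, construct an auxiliary perturbation $\textbf{W}^h \in \bm{\mathcal{V}}^h$ for which $\textbf{B}(\textbf{U}^h, \textbf{W}^h)$ reproduces the missing streamline term, and then test with $\textbf{V}^h = \textbf{U}^h + \delta \textbf{W}^h$ for a sufficiently small $\delta > 0$. Theorem \ref{theorem1} already controls the first two lines of $\|\textbf{U}^h\|_S^2$ (coarse and fine diffusion, interior penalty, outflow trace), so the only quantity missing from the bound $\textbf{B}(\textbf{U}^h, \textbf{U}^h) \gtrsim \|\textbf{U}^h\|_S^2$ is the SUPG-like contribution $\sum_K \|\tau_K^{1/2} \mathcal{P}'_K(\bm{a}\cdot\nabla(\bar{u}+u'))\|_{0,\Omega_K}^2$.

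The natural candidate is $\textbf{W}^h := (0, w')$ with $w'|_{\Omega_K} := \tau_K\, \mathcal{P}'_K(\bm{a}\cdot\nabla(\bar{u}+u')) \in \mathcal{V}'_{disc,K}$. When evaluating $\textbf{B}(\textbf{U}^h, \textbf{W}^h)$ the contributions $\bar{B}(\bar{u}, 0)$ and $\bar{C}(u', 0)$ vanish; integrating the advective term $-\int_{\Omega_K} u'\,\bm{a}\cdot\nabla w'$ in $B'(u', w')$ by parts (using $\nabla\cdot\bm{a} = 0$) and combining it with the coarse advective piece of $C'(\bar{u}, w')$ yields $\sum_K \int_{\Omega_K} w'\, \bm{a}\cdot\nabla(\bar{u}+u')$. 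Since $w'|_{\Omega_K}\in \mathcal{V}'_{disc,K}$, replacing the integrand by its $L^2$-projection onto $\mathcal{V}'_{disc,K}$ is free, and this quantity is precisely $\sum_K \tau_K \|\mathcal{P}'_K(\bm{a}\cdot\nabla(\bar{u}+u'))\|_{0,\Omega_K}^2$, i.e.\ the target SUPG norm.

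The remaining contributions to $\textbf{B}(\textbf{U}^h, \textbf{W}^h)$ --- the coarse diffusive residual $-\int_{\Omega_K} w'\, \nabla\cdot(\kappa\nabla\bar{u})$, the bulk fine diffusion $\int_{\Omega_K}(\kappa+\kappa_{art})\nabla u'\cdot \nabla w'$, the advective trace $\int_{\Gamma_K} u'w' a_n$ produced by integration by parts, and the interior penalty traces --- must be absorbed by an $\epsilon$-fraction of the SUPG term plus a constant multiple of the terms already controlled in Theorem \ref{theorem1}, via Cauchy--Schwarz, Young's inequality, and the trace and inverse estimates in Assumptions 1 and 2. The definition of $\tau_K$ is essential: the inequalities $\tau_K\, \|\bm{a}\|_{0,\infty,\Omega_K} \le C_\tau h_K$ and $C_{inv}\, \tau_K\, \kappa \le C_\tau h_K^2$ allow each spurious quantity to be re-expressed, after an inverse/trace estimate converts $|w'|_{1,\Omega_K}$ and $\|w'\|_{0,\Gamma_K}$ into $h_K^{-1}\|w'\|_{0,\Omega_K}$, as a constant times $\tau_K^{1/2}\|\mathcal{P}'_K(\bm{a}\cdot\nabla(\bar{u}+u'))\|_{0,\Omega_K}$, with constants uniform in $\kappa$ and $h$. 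The same chain of estimates gives the companion bound $\|\textbf{W}^h\|_S \le C \|\textbf{U}^h\|_S$ directly from the norm definition.

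Assembling everything, $\textbf{B}(\textbf{U}^h, \textbf{U}^h + \delta\textbf{W}^h) \ge (\tfrac14 - \delta C_1)\, \mathcal{E}(\textbf{U}^h) + \delta(1 - C_2\epsilon)\, \mathcal{S}(\textbf{U}^h)$, where $\mathcal{E}(\textbf{U}^h)$ denotes the coercive lower bound from Theorem \ref{theorem1} and $\mathcal{S}(\textbf{U}^h)$ the SUPG term; first choosing $\epsilon$ and then $\delta$ small enough yields a lower bound proportional to $\|\textbf{U}^h\|_S^2$, and dividing by $\|\textbf{U}^h + \delta\textbf{W}^h\|_S \le (1+\delta C)\|\textbf{U}^h\|_S$ produces the claimed $\beta$ independent of $\bm{a}$, $\kappa$, and $h$. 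I expect the principal obstacle to be the Young's-inequality bookkeeping required to remain uniform in both the diffusive ($\kappa \gg h_K\|\bm{a}\|$) and advective ($\kappa \ll h_K\|\bm{a}\|$) limits: the two branches of $\tau_K$, the penalty constant $C_{pen}$ from Assumption 1, and the artificial diffusivity $\kappa_{art}$ from Assumption 2 must all balance so that after absorption no residual factor carries a P\'eclet-number or mesh-size dependence.
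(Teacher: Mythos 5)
Your proposal is correct and follows essentially the same route as the paper's own proof: the paper defines exactly your perturbation $z'|_{\Omega_K} = \tau_K \mathcal{P}'_K\left(\bm{a}\cdot\nabla(\bar{u}+u')\right)$, tests with $\bar{v}=\bar{u}$, $v'=u'+C_{lin}z'$, recovers the SUPG term via the same integration by parts and projection identity, and absorbs the spurious diffusive, penalty, and trace contributions through Young's inequality with the trace/inverse estimates of Assumptions 1 and 2 and both branches of $\tau_K$, before combining with the coercivity of Theorem \ref{theorem1} and the bound $\|\textbf{Z}^h\|_S \leq C_z\|\textbf{U}^h\|_S$. No substantive differences to report.
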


\begin{proof}
Let $\textbf{U}^h = (\bar{u},u')$ be an arbitrary member of $\bm{\mathcal{V}}^h$.  It suffices to show that there exists some $\textbf{V}^h = (\bar{v},v') \in \bm{\mathcal{V}}^h$ such that $\textbf{B}(\textbf{U}^h,\textbf{V}^h) \geq \beta \| \textbf{U}^h \|_S \| \textbf{V}^h \|_S$.  In this direction, let $z' \in \mathcal{V}'_{disc}$ be defined such that $z'|_{\Omega_K} = \tau_K \mathcal{P}_K' \left( \bm{a} \cdot \nabla \left( \bar{u} + u' \right) \right)$.  Note that we have the following inequalities bounding the element-wise $L^2$-norm of $z'$:
\begin{eqnarray}
\| z' \|_{0,\Omega_K} &\leq& \tau_K \| \bm{a} \cdot \nabla \left( \bar{u} + u' \right) \|_{0,\Omega_K} \nonumber \\
&\leq& \tau_K \| \bm{a} \|_{0,\infty,\Omega_K} | \bar{u} + u' |_{1,\Omega_K} \nonumber \\
&\leq& \tau_K C^{1/2}_{inv} h_K^{-1} \| \bm{a} \|_{0,\infty,\Omega_K} \| \bar{u} + u' \|_{0,\Omega_K} \nonumber \\
&\leq& C_{\tau} C^{1/2}_{inv}  \| \bar{u} + u' \|_{0,\Omega_K} \nonumber \\
&\leq& C_{\tau} C^{1/2}_{inv}  \left( \| \bar{u} \|_{0,\Omega_K} +  \| u' \|_{0,\Omega_K} \right),
\end{eqnarray}
the element-wise $L^2$-norm of the gradient of $z'$:
\begin{eqnarray}
\kappa^{1/2} | z' |_{1,\Omega_K} &\leq& \kappa^{1/2} C^{1/2}_{inv} h_K^{-1} \| z' \|_{0,\Omega_K} \nonumber \\
&=& \tau^{1/2}_K \kappa^{1/2} C^{1/2}_{inv} h_K^{-1} \| \tau^{-1/2}_K z' \|_{0,\Omega_K} \nonumber \\
&\leq& C^{1/2}_{\tau} \| \tau^{1/2}_K \mathcal{P}_K' \left( \bm{a} \cdot \nabla \left( \bar{u} + u' \right) \right) \|_{0,\Omega_K},
\end{eqnarray}
again the element-wise $L^2$-norm of the gradient of $z'$:
\begin{eqnarray}
\kappa_{art}^{1/2} | z' |_{1,\Omega_K} &\leq& \kappa_{art}^{1/2} C^{1/2}_{inv} h_K^{-1} \| z' \|_{0,\Omega_K} \nonumber \\
&=& \tau^{1/2}_K \kappa_{art}^{1/2} C^{1/2}_{inv} h_K^{-1} \| \tau^{-1/2}_K z' \|_{0,\Omega_K} \nonumber \\
&=& \tau_K C^{-1/2}_{\tau} C^{1/2}_{art} \| \bm{a} \|_{0,\infty,\Omega_K} C^{1/2}_{inv} h_K^{-1} \| \tau^{-1/2}_K z' \|_{0,\Omega_K} \nonumber \\
&\leq& C^{1/2}_{\tau} C^{1/2}_{art} C^{1/2}_{inv} \| \tau^{1/2}_K \mathcal{P}_K' \left( \bm{a} \cdot \nabla \left( \bar{u} + u' \right) \right) \|_{0,\Omega_K},
\end{eqnarray}
the element-wise $L^2$-norm of the trace of $z'$:
\begin{eqnarray}
\| a^{1/2}_n z'\|_{0,\Gamma_K} &\leq&\| \bm{a} \|^{1/2}_{0,\infty,\Omega_K} \| z' \|_{0,\Gamma_K} \nonumber \\
&\leq& C^{1/2}_{trace} h_K^{-1/2} \| \bm{a} \|^{1/2}_{0,\infty,\Omega_K} \| \tau_K \mathcal{P}_K' \left( \bm{a} \cdot \nabla \left( \bar{u} + u' \right) \right) \|_{0,\Omega_K} \nonumber \\
&\leq& \tau^{1/2}_K C^{1/2}_{trace} h_K^{-1/2} \| \bm{a} \|^{1/2}_{0,\infty,\Omega_K} \| \tau^{1/2}_K \mathcal{P}_K' \left( \bm{a} \cdot \nabla \left( \bar{u} + u' \right) \right) \|_{0,\Omega_K} \nonumber \\
&\leq& C^{1/2}_{\tau} C^{1/2}_{trace} \| \tau^{1/2}_K \mathcal{P}_K' \left( \bm{a} \cdot \nabla \left( \bar{u} + u' \right) \right) \|_{0,\Omega_K},
\end{eqnarray}
again the element-wise $L^2$-norm of the trace of $z'$:
\begin{eqnarray}
C^{1/2}_{pen} h_K^{-1/2} \kappa^{1/2} \| z' \|_{0,\Gamma_K} &\leq& C^{1/2}_{pen} C^{1/2}_{trace} h_K^{-1} \kappa^{1/2} \| z' \|_{0,\Omega_K} \nonumber \\
&=& \tau_K^{1/2} C^{1/2}_{pen} C^{1/2}_{trace} h_K^{-1} \kappa^{1/2} \| \tau^{-1/2}_K z' \|_{0,\Omega_K} \nonumber \\
&\leq& C^{1/2}_{\tau} C^{1/2}_{pen} C^{1/2}_{trace} C^{-1/2}_{inv} \| \tau^{1/2}_K \mathcal{P}_K' \left( \bm{a} \cdot \nabla \left( \bar{u} + u' \right) \right) \|_{0,\Omega_K}
\end{eqnarray}
and the element-wise $L^2$-norm of the normal derivative trace of $z'$:
\begin{eqnarray}
h_K^{1/2} \kappa^{1/2} \| \nabla z' \cdot \bm{n} \|_{0,\Gamma_K} &\leq& C^{1/2}_{trace} \kappa^{1/2} | z' |_{1,\Omega_K} \nonumber \\
&\leq&  C^{1/2}_{inv} C^{1/2}_{trace} h_K^{-1} \kappa^{1/2} \| z' \|_{0,\Omega_K} \nonumber \\
&=& \tau_K^{1/2} C^{1/2}_{inv} C^{1/2}_{trace} h_K^{-1} \kappa^{1/2} \| \tau^{-1/2} z' \|_{0,\Omega_K} \nonumber \\
&\leq& C^{1/2}_{\tau} C^{1/2}_{trace} \| \tau^{1/2}_K \mathcal{P}_K' \left( \bm{a} \cdot \nabla \left( \bar{u} + u' \right) \right) \|_{0,\Omega_K}.
\end{eqnarray}
We need one more inequality for $z'$.  Notably, observe:
\begin{eqnarray}
\| \tau_K^{1/2} \mathcal{P}'_K \left( \bm{a} \cdot \nabla z' \right) \|_{0,\Omega_K} & \leq &  \tau_K^{1/2} \| \bm{a} \|_{0,\infty,\Omega_K} | z' |_{1,\Omega_K} \nonumber \\
& \leq & \tau^{1/2}_{K} C^{1/2}_{inv} h_K^{-1} \| \bm{a} \|_{0,\infty,\Omega_K} \| z' \|_{0,\Omega_K} \nonumber \\
& = & \tau^{1/2}_{K} C^{1/2}_{inv} h_K^{-1} \| \bm{a} \|_{0,\infty,\Omega_K} \| \tau_K \mathcal{P}'_K \left( \bm{a} \cdot \nabla \left( \bar{u} + u' \right) \right) \|_{0,\Omega_K} \nonumber \\
& \leq & \tau_{K} C^{1/2}_{inv} h_K^{-1} \| \bm{a} \|_{0,\infty,\Omega_K} \| \tau^{1/2}_K \mathcal{P}'_K \left( \bm{a} \cdot \nabla \left( \bar{u} + u' \right) \right) \|_{0,\Omega_K} \nonumber \\
& \leq & C_{\tau} C^{1/2}_{inv}  \| \tau_K \mathcal{P}'_K \left( \bm{a} \cdot \nabla \left( \bar{u} + u' \right) \right) \|_{0,\Omega_K}.
\end{eqnarray}
Let us define $\textbf{Z}^h = (0,z')$.  Note immediately that as a consequence of the above inequalities, the following inequality holds:
\begin{equation}
\| \textbf{Z}^h \|_S \leq C_z \| \textbf{U}^h \|_S
\end{equation}
where:
\begin{equation}
C_z = \sqrt{ C_{\tau} \left( 1 + C_{trace} + C_{trace} C_{pen} C^{-1}_{inv} + C_{art} C_{inv} + C_{\tau} C_{inv} \right) }.
\end{equation}
We now seek a lower bound for the quantity $\textbf{B}(\textbf{U}^h,\textbf{Z}^h)$.  We observe that:
\begin{eqnarray}
\textbf{B}(\textbf{U}^h,\textbf{Z}^h) &=& B'(u',z') + C'(\bar{u},z') \nonumber \\
&=& \sum_{K=1}^{n_{el}} \left( -\int_{\Omega_K} u' \bm{a} \cdot \nabla z' + \int_{\Omega_K} \left( \kappa + \kappa_{art} \right) \nabla u' \cdot \nabla z' + \int_{\Gamma_K} u' z' a_n^+ \right. \nonumber \\
& & - \int_{\Gamma_K} \kappa \nabla u' \cdot \bm{n} z' - \int_{\Gamma_K} u' \kappa \nabla z' \cdot \bm{n} + \int_{\Gamma_K} \frac{C_{pen} \kappa}{h_K} u' z' \nonumber \\
&& \left. + \int_{\Omega_K} z' \left( \bm{a} \cdot \nabla \bar{u} - \nabla \cdot \left( \kappa \nabla \bar{u} \right) \right) \right) \nonumber \\
&=& \sum_{K=1}^{n_{el}} \left( \int_{\Omega_K} \bm{a} \cdot \nabla \left( \bar{u} + u' \right) z' + \int_{\Omega_K} \left( \kappa + \kappa_{art} \right) \nabla u' \cdot \nabla z' \right. \nonumber \\
&& - \int_{\Gamma_K} u' z' a_n^- - \int_{\Gamma_K} \kappa \nabla u' \cdot \bm{n} z' - \int_{\Gamma_K} u' \kappa \nabla z' \cdot \bm{n} + \int_{\Gamma_K} \frac{C_{pen} \kappa}{h_K} u' z' \nonumber \\
&& \left. - \int_{\Omega_K} z' \left(\nabla \cdot \left( \kappa \nabla \bar{u} \right) \right) \right).
\end{eqnarray}
We deal with each of the expressions appearing above one-by-one.  First, we note that, by definition:
\begin{eqnarray}
\int_{\Omega_K} \bm{a} \cdot \nabla \left( \bar{u} + u' \right) z' &=& \tau_K \int_{\Omega_K} \bm{a} \cdot \nabla \left( \bar{u} + u' \right) \mathcal{P}'_K \left( \bm{a} \cdot \nabla \left( \bar{u} + u' \right) \right) \nonumber \\
&=& \| \tau_K^{1/2} \mathcal{P}'_K \left( \bm{a} \cdot \nabla \left( \bar{u} + u' \right) \right) \|^2_{0,\Omega_K}.
\end{eqnarray}
For the next term, we have:
\begin{eqnarray}
\left| \int_{\Omega_K} \kappa \nabla u' \cdot \nabla z' \right| & \leq & \kappa | u' |_{1,\Omega_K} | z' |_{1,\Omega_K} \nonumber \\
& \leq & C^{1/2}_{\tau} \kappa^{1/2} | u' |_{1,\Omega_K} \| \tau^{1/2}_K \mathcal{P}_K' \left( \bm{a} \cdot \nabla \left( \bar{u} + u' \right) \right) \|_{0,\Omega_K} \nonumber \\
& \leq & C^{1/2}_{\tau} \left( \frac{\kappa}{2\gamma_1} | u' |_{1,\Omega_K}^2 + \frac{\gamma_1}{2} \| \tau^{1/2}_K \mathcal{P}_K' \left( \bm{a} \cdot \nabla \left( \bar{u} + u' \right) \right) \|_{0,\Omega_K}^2 \right)
\end{eqnarray}
where $\gamma_1 > 0$ is an arbitrarily chosen positive number.  In analogous fashion, we have:
\begin{eqnarray}
\left| \int_{\Omega_K} \kappa_{art} \nabla u' \cdot \nabla z' \right| & \leq & C^{1/2}_{\tau} C^{1/2}_{art} C^{1/2}_{inv} \left( \frac{\kappa_{art}}{2\gamma_2} | u' |_{1,\Omega_K}^2 + \frac{\gamma_2}{2} \| \tau^{1/2}_K \mathcal{P}_K' \left( \bm{a} \cdot \nabla \left( \bar{u} + u' \right) \right) \|_{0,\Omega_K}^2 \right) \nonumber \\
\end{eqnarray}
\begin{eqnarray}
\left| \int_{\Gamma_K} u' z' a_n^- \right| & \leq & C^{1/2}_{\tau} C^{1/2}_{trace} \left( \frac{1}{2\gamma_3} \| a_n^{1/2} u' \|_{0,\Gamma_K}^2 + \frac{\gamma_3}{2} \| \tau^{1/2}_K \mathcal{P}_K' \left( \bm{a} \cdot \nabla \left( \bar{u} + u' \right) \right) \|_{0,\Omega_K}^2 \right) \nonumber \\
\end{eqnarray}
\begin{eqnarray}
\left| \int_{\Gamma_K} \frac{C_{pen} \kappa}{h_K} u' z' \right| & \leq & C^{1/2}_{\tau} C^{1/2}_{pen} C^{-1/2}_{trace} C^{-1/2}_{inv} \left( \frac{C_{pen} \kappa}{2\gamma_4 h_K} \| u' \|_{0,\Gamma_K}^2 + \frac{\gamma_4}{2} \| \tau^{1/2}_K \mathcal{P}_K' \left( \bm{a} \cdot \nabla \left( \bar{u} + u' \right) \right) \|_{0,\Omega_K}^2 \right) \nonumber \\
\end{eqnarray}
where again $\gamma_2, \gamma_3, \gamma_4 > 0$ are arbitrarily chosen positive numbers.  The remaining terms require slightly more care.  For the first remaining term, we have:
\begin{eqnarray}
\left| \int_{\Gamma_K} \kappa \nabla u' \cdot \bm{n} z' \right| & \leq & \left( C_{pen}^{-1/2} h_K^{1/2} \kappa^{1/2} \| \nabla u' \cdot \bm{n} \|_{0,\Gamma_K} \right) \left( C_{pen}^{1/2} \kappa^{1/2} h_K^{-1/2} \| z' \|_{0,\Gamma_K} \right) \nonumber \\
& \leq & \left( C_{trace}^{1/2} C_{pen}^{-1/2} \kappa^{1/2} | u' |_{1,\Omega_K} \right) \left( C_{pen}^{1/2} \kappa^{1/2} h_K^{-1/2} \| z' \|_{0,\Gamma_K} \right) \nonumber \\
& \leq & \left( \kappa^{1/2} | u' |_{1,\Omega_K} \right) \left( C_{pen}^{1/2} \kappa^{1/2} h_K^{-1/2} \| z' \|_{0,\Gamma_K} \right) \nonumber \\
& \leq & C^{1/2}_{\tau} C^{1/2}_{pen} C^{-1/2}_{trace} C^{-1/2}_{inv} \left( \frac{\kappa}{2\gamma_5} | u' |_{1,\Omega_K}^2 + \frac{\gamma_5}{2} \| \tau^{1/2}_K \mathcal{P}_K' \left( \bm{a} \cdot \nabla \left( \bar{u} + u' \right) \right) \|_{0,\Omega_K}^2 \right) \nonumber \\
\end{eqnarray}
where $\gamma_5 > 0$ is another arbitrarily chosen positive number.  For the second remaining term, we have:
\begin{eqnarray}
\left| \int_{\Gamma_K} u' \kappa \nabla z' \cdot \bm{n} \right| & \leq & \left( C_{pen}^{1/2} \kappa^{1/2} h_K^{-1/2} \| u' \|_{0,\Gamma_K} \right) \left( C_{pen}^{-1/2} h_K^{1/2} \kappa^{1/2} \| \nabla z' \cdot \bm{n} \|_{0,\Gamma_K} \right) \nonumber \\
& = & C_{\tau}^{1/2} \left( \frac{C_{pen} \kappa}{2\gamma_6 h_K} \| u' \|_{0,\Gamma_K}^2 + \frac{\gamma_6}{2} \| \tau^{1/2}_K \mathcal{P}_K' \left( \bm{a} \cdot \nabla \left( \bar{u} + u' \right) \right) \|_{0,\Omega_K}^2 \right) \nonumber \\
\end{eqnarray}
where $\gamma_6 > 0$ is yet another arbitrarily chosen positive number.  For the last remaining term, we integrate by parts, resulting in:
\begin{eqnarray}
\int_{\Omega_K} z' \left(\nabla \cdot \left( \kappa \nabla \bar{u} \right) \right) = - \int_{\Omega_K} \kappa \nabla \bar{u} \cdot \nabla z' + \int_{\Gamma_K} \kappa \nabla \bar{u} \cdot \bm{n} z'
\end{eqnarray}
The two terms on the right-hand-side are then easily bound as before, yielding the inequality:
\begin{eqnarray}
\left| \int_{\Omega_K} z' \left(\nabla \cdot \left( \kappa \nabla \bar{u} \right) \right) \right| & \leq & C^{1/2}_{\tau} \left( \frac{\kappa}{2\gamma_7} | \bar{u} |_{1,\Omega_K}^2 + \frac{\gamma_7}{2} \| \tau^{1/2}_K \mathcal{P}_K' \left( \bm{a} \cdot \nabla \left( \bar{u} + u' \right) \right) \|_{0,\Omega_K}^2 \right) \nonumber \\
&& + C^{1/2}_{\tau} C^{1/2}_{pen} C^{-1/2}_{trace} C^{-1/2}_{inv} \left( \frac{\kappa}{2\gamma_8} | \bar{u} |_{1,\Omega_K}^2 + \frac{\gamma_8}{2} \| \tau^{1/2}_K \mathcal{P}_K' \left( \bm{a} \cdot \nabla \left( \bar{u} + u' \right) \right) \|_{0,\Omega_K}^2 \right) \nonumber \\
\end{eqnarray}
where $\gamma_7 > 0, \gamma_8 > 0$ are two final arbitrarily chosen positive numbers.  Collecting all of the above inequalities, we obtain the composite inequality:
\begin{eqnarray}
\textbf{B}(\textbf{U}^h,\textbf{Z}^h) \geq C_1 \sum_{K=1}^{n_{el}} \| \tau_K^{1/2} \mathcal{P}'_K \left( \bm{a} \cdot \nabla \left(\bar{v} + v'\right) \right) \|^2_{0,\Omega_K} - C_2 \kappa | \bar{v} |^2_{1,\Omega} \nonumber \\ - \sum_{K=1}^{n_{el}} \left( \left( C_3 \kappa + C_4 \kappa_{art} \right) | v' |^2_{1,\Omega_K} + C_5 \frac{C_{pen}\kappa}{h_K} \| v' \|^2_{0,\Gamma_K} + C_6 \| a_n^{1/2} v' \|^2_{0,\Gamma_K} \right) \label{eqn:composite_inequality}
\end{eqnarray}
wherein:
\begin{align}
C_1 &= 1 - \frac{C_{\tau}^{1/2}}{2} \left( \left( \gamma_1 + \gamma_6 + \gamma_7 \right) + C_{art}^{1/2} C_{inv}^{1/2} \gamma_2 + C_{trace}^{1/2} \gamma_3 + C_{pen}^{1/2} C_{trace}^{-1/2} C_{inv}^{-1/2} \left( \gamma_4 + \gamma_5 + \gamma_8 \right) \right) \nonumber
\end{align}
\begin{align}
C_2 &= \frac{C_{\tau}^{1/2}}{2\gamma_2} + \frac{C_{\tau}^{1/2} C_{pen}^{1/2} C_{trace}^{-1/2} C_{inv}^{-1/2}}{2\gamma_8} \nonumber \\
C_3 &= \frac{C_{\tau}^{1/2}}{2\gamma_1} + \frac{C_{\tau}^{1/2} C_{pen}^{1/2} C_{trace}^{-1/2} C_{inv}^{-1/2}}{2\gamma_5} \nonumber \\
C_4 &= \frac{C_{\tau}^{1/2}C_{art}^{1/2}C_{inv}^{1/2}}{2\gamma_3} \nonumber \\
C_5 &= \frac{C_{\tau}^{1/2} C_{pen}^{1/2} C_{trace}^{-1/2} C_{inv}^{-1/2}}{2\gamma_4}+ \frac{C_{\tau}^{1/2}}{2\gamma_6}\nonumber \\
C_6 &=  \frac{C_{\tau}^{1/2}C_{trace}^{1/2}}{2\gamma_3}. \nonumber
\end{align}
We now assume that $\gamma_1$ through $\gamma_8$ are chosen sufficiently small to guarantee that $C_1 > 0$.  Note that we can choose such constants independent of the problem parameters and the mesh size.  This choice in turn defines the constants $C_2$ through $C_6$.  We are now in a position to define a suitable group test function $\textbf{V}^h = (\bar{v},v') \in \bm{\mathcal{V}}^h$.  Namely, we select $\bar{v} = \bar{u}$ and $v' = u' + C_{lin} z'$ where:
\begin{eqnarray}
C_{lin} = \frac{1}{8} \min_{2 \leq i \leq 6}\left\{\frac{1}{C_i}\right\}
\end{eqnarray}
Then, by Theorem \ref{theorem1} and (\ref{eqn:composite_inequality}), it follows that:
\begin{eqnarray}
\textbf{B}(\textbf{U}^h,\textbf{V}^h) \geq C_{bound} \| \textbf{U}^h \|^2_S
\end{eqnarray}
with:
\begin{eqnarray}
C_{bound} = \min \left\{ \frac{1}{8}, C_1 C_{lin} \right\}
\end{eqnarray}
and:
\begin{eqnarray}
\| \textbf{V}^h \|_S \leq C_v \| \textbf{U}^h \|_S
\end{eqnarray}
with $C_v = 1 + C_{lin} C_z$.  Thus the desired condition holds with $\beta = C_{bound}/C_v$ independent of the problem parameters $\bm{a}$ and $\kappa$ and the mesh size $h$. \qed
\end{proof}

We now introduce one more assumption.  This assumption guarantees that our methodology is at least as stable as the SUPG method for steady scalar transport, as is shown in Corollary 4.3.\\

\noindent \textbf{Assumption 3:} \textit{The following inequality holds for each element $\Omega_K \in \mathcal{M}$:
\begin{equation}
\| \tau_K^{1/2} \bm{a} \cdot \nabla \left(\bar{v} + v'\right) \|^2_{0,\Omega_K} \leq C_{SUPG} \left( \kappa_{art} | v' |^2_{1,\Omega_K} + \| \tau_K^{1/2} \mathcal{P}'_K \left( \bm{a} \cdot \nabla \left(\bar{v} + v'\right) \right) \|^2_{0,\Omega_K} \right) \nonumber
\end{equation}
where $C_{SUPG} > 0$ is a positive constant independent of the problem parameters $\bm{a}$ and $\kappa$ and the mesh size $h$.}

\begin{corollary}
Provided Assumptions 1, 2, and 3 hold, the following inf-sup stability result is satisfied:
\begin{equation}
\inf_{\textup{\textbf{U}}^h \in \bm{\mathcal{V}}^h} \sup_{\textup{\textbf{V}}^h \in \bm{\mathcal{V}}^h} \frac{\textup{\textbf{B}}(\textup{\textbf{U}}^h,\textup{\textbf{V}}^h)}{ \| \textup{\textbf{U}}^h \|_{SUPG} \| \textup{\textbf{V}}^h \|_{SUPG} } \geq \beta_{SUPG} > 0 \nonumber
\end{equation}
where:
\begin{eqnarray}
\| \textbf{\textup{V}}^h \|^2_{SUPG} &=& \kappa | \bar{v} |^2_{1,\Omega} + \sum_{K=1}^{n_{el}} \left( \left( \kappa + \kappa_{art} \right) | v' |^2_{1,\Omega_K} + \frac{C_{pen}\kappa}{h_K} \| v' \|^2_{0,\Gamma_K} + \| a_n^{1/2} v' \|^2_{0,\Gamma_K} \right) \nonumber \\ && + \sum_{K=1}^{n_{el}} \| \tau_K^{1/2} \bm{a} \cdot \nabla \left(\bar{v} + v'\right) \|^2_{0,\Omega_K} \nonumber
\end{eqnarray}
for every $\textbf{\textup{V}}^h \in \bm{\mathcal{V}}^h$ and $\beta_{SUPG}$ is a positive constant independent of the problem parameters $\bm{a}$ and $\kappa$ and the mesh size $h$.
\end{corollary}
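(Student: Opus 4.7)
The strategy is to argue that on $\bm{\mathcal{V}}^h$ the norms $\|\cdot\|_S$ from Theorem~4.2 and $\|\cdot\|_{SUPG}$ are uniformly equivalent, and then to transport the already-established inf-sup result of Theorem~4.2 across that equivalence. Inspecting the two norms side by side, I observe that they coincide term for term except in the last sum: the $S$-norm uses $\|\tau_K^{1/2}\mathcal{P}'_K(\bm{a}\cdot\nabla(\bar v+v'))\|_{0,\Omega_K}^2$, while the SUPG norm uses the full unprojected quantity $\|\tau_K^{1/2}\bm{a}\cdot\nabla(\bar v+v')\|_{0,\Omega_K}^2$. All of the ensuing work therefore concentrates on comparing these two streamline-derivative contributions.

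One direction of the equivalence, $\|\textbf{V}^h\|_S \leq \|\textbf{V}^h\|_{SUPG}$, is immediate: since $\mathcal{P}'_K$ is an $L^2$-orthogonal projector and hence a contraction on $L^2(\Omega_K)$, the projected streamline-derivative term is bounded element by element by its unprojected counterpart, and the rest of the two norms agrees verbatim. The reverse direction, $\|\textbf{V}^h\|_{SUPG}^2 \leq (1+C_{SUPG})\|\textbf{V}^h\|_S^2$, is exactly what Assumption~3 supplies: summing the assumed element-wise estimate over $K$ controls $\sum_K\|\tau_K^{1/2}\bm{a}\cdot\nabla(\bar v+v')\|_{0,\Omega_K}^2$ by $C_{SUPG}$ times the sum of $\kappa_{art}|v'|_{1,\Omega_K}^2$ and the projected streamline-derivative term, both of which are already dominated by the remaining pieces of $\|\textbf{V}^h\|_S^2$ (the former by the $(\kappa+\kappa_{art})|v'|_{1,\Omega_K}^2$ contribution, the latter by itself).

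With the equivalence $(1+C_{SUPG})^{-1/2}\|\textbf{V}^h\|_{SUPG}\leq\|\textbf{V}^h\|_S\leq\|\textbf{V}^h\|_{SUPG}$ in hand, the corollary reduces to bookkeeping. Fixing an arbitrary $\textbf{U}^h\in\bm{\mathcal{V}}^h$, Theorem~4.2 supplies some $\textbf{V}^h\in\bm{\mathcal{V}}^h$ satisfying $\textbf{B}(\textbf{U}^h,\textbf{V}^h)\geq\beta\|\textbf{U}^h\|_S\|\textbf{V}^h\|_S$. Dividing by $\|\textbf{V}^h\|_{SUPG}$ and inserting the equivalence on both factors yields $\textbf{B}(\textbf{U}^h,\textbf{V}^h)/\|\textbf{V}^h\|_{SUPG}\geq \beta(1+C_{SUPG})^{-1}\|\textbf{U}^h\|_{SUPG}$, so the desired inf-sup condition holds with $\beta_{SUPG}=\beta/(1+C_{SUPG})$. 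Since Theorem~4.2 guarantees that $\beta$ is independent of $\bm{a}$, $\kappa$, and $h$, and Assumption~3 stipulates the same of $C_{SUPG}$, the resulting constant $\beta_{SUPG}$ inherits this robustness. I do not anticipate a serious obstacle: the entire content of the corollary lies in recognizing that Assumption~3 is designed precisely to upgrade the projected streamline-derivative control already proved in Theorem~4.2 to the stronger unprojected control that characterizes the SUPG norm.
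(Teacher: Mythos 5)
Your proposal is correct and follows exactly the route the paper intends: the paper states this corollary without proof precisely because it is an immediate consequence of Theorem 4.2 via the norm equivalence $(1+C_{SUPG})^{-1/2}\,\| \cdot \|_{SUPG} \leq \| \cdot \|_{S} \leq \| \cdot \|_{SUPG}$ that you establish, with the lower bound coming from the $L^2$-contractivity of the projector $\mathcal{P}'_K$ (noting $\tau_K$ is constant on each element) and the upper bound from summing Assumption 3 over the elements and absorbing $\kappa_{art}|v'|^2_{1,\Omega_K}$ into the $(\kappa+\kappa_{art})|v'|^2_{1,\Omega_K}$ term. Your transfer of the inf-sup bound, giving $\beta_{SUPG} = \beta/(1+C_{SUPG})$ with the stated independence of $\bm{a}$, $\kappa$, and $h$, is exactly the bookkeeping the authors leave to the reader.
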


There remains the question of whether or not we can expect Assumption 3 to be satisfied for a given finite element discretization.  The following lemma demonstrates that Assumption 3 is indeed satisfied if the discontinuous subscale solution space is sufficiently rich and the artificial diffusivity is chosen in an intelligent manner.

\begin{lemma}
Assumption 3 is satisfied provided that one of the following two conditions is satisfied:
\vspace{3pt}

\textbf{\textup{C1:}} It holds that $\bm{a} \cdot \nabla \left(\bar{v} + v' \right) \in \mathcal{V}'_{disc}$ for every $(\bar{v},v') \in \bm{\mathcal{V}}^h$.
\vspace{3pt}

\textbf{\textup{C2:}} It holds that $\bm{a} \cdot \nabla \bar{v} \in \mathcal{V}'_{disc}$ for every $\bar{v} \in \overline{\mathcal{V}}$ and Assumption 2 is satisfied with $C_{art} > 0$.
\end{lemma}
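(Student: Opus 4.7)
The plan is to split $\bm{a}\cdot\nabla(\bar{v}+v')$ into its component in $\mathcal{V}'_{disc,K}$ and the orthogonal remainder, and show that in either case the remainder is controlled by $\kappa_{art}|v'|_{1,\Omega_K}^2$. Concretely, since $\mathcal{P}'_K$ is the $L^2$-orthogonal projector onto $\mathcal{V}'_{disc,K}$, I would start from the Pythagorean identity
\begin{equation}
\|\tau_K^{1/2}\bm{a}\cdot\nabla(\bar{v}+v')\|^2_{0,\Omega_K} = \|\tau_K^{1/2}\mathcal{P}'_K(\bm{a}\cdot\nabla(\bar{v}+v'))\|^2_{0,\Omega_K} + \|\tau_K^{1/2}(I-\mathcal{P}'_K)(\bm{a}\cdot\nabla(\bar{v}+v'))\|^2_{0,\Omega_K} \nonumber
\end{equation}
so the task reduces to bounding the second term on the right.

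Under condition \textbf{C1}, $\bm{a}\cdot\nabla(\bar{v}+v')$ itself lies in $\mathcal{V}'_{disc,K}$, so $(I-\mathcal{P}'_K)(\bm{a}\cdot\nabla(\bar{v}+v')) = 0$ and Assumption 3 holds with $C_{SUPG} = 1$ and the $\kappa_{art}$ term unused. Under condition \textbf{C2}, I would use linearity of $\mathcal{P}'_K$ together with $\bm{a}\cdot\nabla\bar{v}\in\mathcal{V}'_{disc,K}$ to reduce
\begin{equation}
(I-\mathcal{P}'_K)(\bm{a}\cdot\nabla(\bar{v}+v')) = (I-\mathcal{P}'_K)(\bm{a}\cdot\nabla v'), \nonumber
\end{equation}
then apply $L^2$-stability of the projector ($\|I-\mathcal{P}'_K\|\leq 1$) and a Cauchy--Schwarz bound to obtain
\begin{equation}
\|\tau_K^{1/2}(I-\mathcal{P}'_K)(\bm{a}\cdot\nabla v')\|^2_{0,\Omega_K} \leq \tau_K\|\bm{a}\|^2_{0,\infty,\Omega_K}|v'|^2_{1,\Omega_K}. \nonumber
\end{equation}

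The key identification that closes the argument is that, with $C_{art}>0$, the definitions of $\tau_K$ and $\kappa_{art}$ in Assumption 2 give $\tau_K\|\bm{a}\|^2_{0,\infty,\Omega_K} = (C_\tau/C_{art})\,\kappa_{art}$, because they share the same $\max\{\|\bm{a}\|_{0,\infty,\Omega_K}/h_K, C_{inv}\kappa/h_K^2\}^{-1}$ factor. Substituting into the Pythagorean identity yields Assumption 3 with $C_{SUPG} = \max\{1, C_\tau/C_{art}\}$, independent of $\bm{a}$, $\kappa$, and $h$.

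There is no genuine analytic obstacle here — the argument is essentially a bookkeeping exercise on the projection decomposition plus the algebraic match between $\tau_K$ and $\kappa_{art}$. The only place to be careful is ensuring that under C2 we do not try to project $\bm{a}\cdot\nabla v'$ itself (which need not lie in $\mathcal{V}'_{disc,K}$ when $\bm{a}$ is variable), and that the argument degenerates gracefully: when $C_{art}=0$ the bound on the $(I-\mathcal{P}'_K)$ piece blows up, which is exactly why C2 requires $C_{art}>0$, while C1 avoids the issue entirely because the projection error vanishes identically.
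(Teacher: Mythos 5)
Your proposal is correct and follows essentially the same route as the paper's proof: the $L^2$-orthogonal (Pythagorean) splitting via $\mathcal{P}'_K$, the vanishing remainder under C1 giving $C_{SUPG}=1$, and under C2 the reduction to $(I-\mathcal{P}'_K)(\bm{a}\cdot\nabla v')$ with projector stability and the algebraic identity $\tau_K\|\bm{a}\|^2_{0,\infty,\Omega_K} = (C_\tau/C_{art})\,\kappa_{art}$, yielding $C_{SUPG}=\max\{1,C_\tau/C_{art}\}$. If anything, your writeup is slightly cleaner than the paper's, which drops the squares on $\|\bm{a}\|_{0,\infty,\Omega_K}$ and $|v'|_{1,\Omega_K}$ in its final two displayed bounds (an evident typo that you implicitly correct).
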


\begin{proof}
We write:
\begin{equation}
\| \tau_K^{1/2} \bm{a} \cdot \nabla \left(\bar{v} + v'\right) \|^2_{0,\Omega_K} = \| \tau_K^{1/2} \mathcal{P}'_K \left( \bm{a} \cdot \nabla \left(\bar{v} + v'\right) \right) \|^2_{0,\Omega_K} + \| \tau_K^{1/2} \left( \mathcal{I} - \mathcal{P}'_K \right) \left( \bm{a} \cdot \nabla \left(\bar{v} + v'\right) \right) \|^2_{0,\Omega_K} \nonumber
\end{equation}
where $\mathcal{I}$ is the identity operator.  If Condition C1 holds, it follows that:
\begin{equation}
\| \tau_K^{1/2} \left( \mathcal{I} - \mathcal{P}'_K \right) \left( \bm{a} \cdot \nabla \left(\bar{v} + v'\right) \right) \|^2_{0,\Omega_K} = 0 \nonumber
\end{equation}
and consequently the lemma is satisfied with $C_{SUPG} = 1$.  If Condition C2 holds, we instead have:
\begin{align}
\| \tau_K^{1/2} \left( \mathcal{I} - \mathcal{P}'_K \right) \left( \bm{a} \cdot \nabla \left(\bar{v} + v'\right) \right) \|^2_{0,\Omega_K} &= \| \tau_K^{1/2} \left( \mathcal{I} - \mathcal{P}'_K \right) \left( \bm{a} \cdot \nabla v' \right) \|^2_{0,\Omega_K} \nonumber \\
&\leq \tau_K \| \bm{a} \cdot \nabla v' \|^2_{0,\Omega_K} \nonumber \\
&\leq \tau_K \| \bm{a} \|_{0,\infty,\Omega_K} | v' |_{1,\Omega_K}. \nonumber
\end{align}
As Assumption 2 is satisfied with $C_{art} > 0$, it follows that:
\begin{equation}
\| \tau_K^{1/2} \left( \mathcal{I} - \mathcal{P}'_K \right) \left( \bm{a} \cdot \nabla \left(\bar{v} + v'\right) \right) \|^2_{0,\Omega_K} \leq \frac{C_{\tau}}{C_{art}} \kappa_{art} | v' |_{1,\Omega_K} \nonumber
\end{equation}
and consequently the lemma is satisfied with $C_{SUPG} = \max\left\{1,\frac{C_{\tau}}{C_{art}}\right\}$.
\qed
\end{proof}

Suppose that the imposed velocity field is a polynomial function of degree $q$ over each element $\Omega_K \in \mathcal{M}$.  Then, for the case of a finite element mesh of simplices, we see that $\bm{a} \cdot \nabla \bar{v}$ is a polynomial function of degree $p + q - 1$ over each element and $\bm{a} \cdot \nabla v'$ is a polynomial function of degree $p_f + q - 1$ over each element.  Consequently, if $p_f \geq p + q - 1$ and $q \leq 1$, then Condition C1 of Lemma 4.4 is satisfied, and if only $p_f \geq p + q - 1$ is satisfied but $C_{art}$ is chosen to be a positive number, then Condition C2 of Lemma 4.4 is satisfied.  For a continuous piecewise linear finite element discretization with discontinuous piecewise linear subscales, we see that Condition C1 of Lemma 4.4 is satisfied for a piecewise linear velocity field.  Alternately, for a continuous piecewise quadratic finite element discretization with discontinuous piecewise linear subscales, we see that Condition C1 of Lemma 4.4 is satisfied for a piecewise constant velocity field.  By enriching the subscale space to discontinuous piecewise quadratic subscales, we find Condition C1 of Lemma 4.4 is satisfied again for a piecewise linear velocity field.

Lemma 4.4 provides a general guideline for how to choose the polynomial degree of the subscale space.  Nonetheless, we have observed our methodology often returns accurate and stable results even when the conditions of the lemma are not satisfied.  In particular, we have observed our methodology is stable if we employ smooth splines for our coarse-scale solution and discontinuous piecewise bi-linear finite elements for our subscale solution.  We anticipate that Lemma 4.4 holds in this case, though such an analysis is beyond the scope of the current work.

Our final theorem demonstrates that our method exhibits optimal convergence rates with respect to the SUPG norm provided Assumptions 1, 2, and 3 hold.

\begin{theorem}
Suppose that the exact solution satisfies the smoothness condition $u \in H^{p+1}(\Omega)$ and Assumptions 1, 2, and 3 hold.  Then, the error $\textup{\textbf{E}} = (u-\bar{u},-u')$ satisfies the \textit{a priori} estimate:
\begin{equation}
\| \textup{\textbf{E}} \|^2_{SUPG} \leq C_{a priori} \sum_{K=1}^{n_{el}} \left( \| \bm{a} \|_{0,\infty,\Omega_K} h^{2p+1}_K + \kappa h^{2p}_K \right) | u |^2_{p+1,\Omega_K}. \nonumber
\end{equation}
where $C_{a priori}$ is a positive constant independent of the problem parameters $\bm{a}$ and $\kappa$ and the mesh size $h$.
\end{theorem}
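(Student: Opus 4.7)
The plan is a standard Galerkin-orthogonality plus inf-sup stability argument, tailored to the group-variable bilinear form $\mathbf{B}$. First I would verify consistency: plugging $(u,0)$ into the coarse-scale part of $\mathbf{B}$ and integrating by parts (using divergence-freeness of $\bm{a}$, the homogeneous Dirichlet condition on $u$, and $\bar{v}|_{\partial\Omega}=0$) gives $\bar{B}(u,\bar{v}) = \int_\Omega f \bar{v} = \bar{L}(\bar{v})$, while $C'(u,v') = \sum_K \int_{\Omega_K} v'(\bm{a}\cdot\nabla u - \nabla\cdot(\kappa\nabla u)) = \int_\Omega f v' = L'(v')$ directly from the strong form. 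Subtracting from the discrete problem $\mathbf{B}(\mathbf{U}^h,\mathbf{V}^h)=\mathbf{L}(\mathbf{V}^h)$ yields Galerkin orthogonality $\mathbf{B}(\mathbf{E},\mathbf{V}^h)=0$ for every $\mathbf{V}^h \in \bm{\mathcal{V}}^h$.

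Next, I would pick a quasi-interpolant $\bar{u}^I \in \overline{\mathcal{V}}$ (e.g.\ Scott--Zhang) preserving the Dirichlet boundary condition and satisfying the standard estimates $\|u-\bar{u}^I\|_{0,\Omega_K} + h_K|u-\bar{u}^I|_{1,\Omega_K} + h_K^2|u-\bar{u}^I|_{2,\Omega_K} \leq C\, h_K^{p+1}|u|_{p+1,\Omega_K}$. Decompose $\mathbf{E} = \boldsymbol{\eta} + \boldsymbol{\xi}$ with $\boldsymbol{\eta}=(u-\bar{u}^I,\,0)$ (interpolation error) and $\boldsymbol{\xi}=(\bar{u}^I-\bar{u},\,-u') \in \bm{\mathcal{V}}^h$ (discrete error), so $\|\mathbf{E}\|_{SUPG} \leq \|\boldsymbol{\eta}\|_{SUPG} + \|\boldsymbol{\xi}\|_{SUPG}$. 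The first piece is immediate: since $v'\equiv 0$ in $\boldsymbol{\eta}$, only the $\kappa|\cdot|_1^2$ and $\tau_K\|\bm{a}\cdot\nabla\cdot\|^2_0$ contributions survive, and using $\tau_K \leq C_\tau h_K/\|\bm{a}\|_{0,\infty,\Omega_K}$ together with the approximation estimates produces $\|\boldsymbol{\eta}\|^2_{SUPG} \leq C\sum_K (\|\bm{a}\|_{0,\infty,\Omega_K} h_K^{2p+1} + \kappa h_K^{2p})|u|^2_{p+1,\Omega_K}$, already of the target form. For the second piece, Corollary 4.3 combined with Galerkin orthogonality $\mathbf{B}(\boldsymbol{\xi},\mathbf{V}^h) = -\mathbf{B}(\boldsymbol{\eta},\mathbf{V}^h)$ reduces the task to bounding $|\mathbf{B}(\boldsymbol{\eta},\mathbf{V}^h)|$ uniformly by $C^{1/2}_{\text{data}}\|\mathbf{V}^h\|_{SUPG}$, where $C_{\text{data}}$ has the advertised form.

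The core technical step is this uniform bound, carried out term by term on
\[
\mathbf{B}(\boldsymbol{\eta},\mathbf{V}^h) = -\int_\Omega \eta\,\bm{a}\cdot\nabla\bar{v} + \int_\Omega \kappa\nabla\eta\cdot\nabla\bar{v} + \sum_{K=1}^{n_{el}} \int_{\Omega_K} v'\bigl(\bm{a}\cdot\nabla\eta - \nabla\cdot(\kappa\nabla\eta)\bigr)
\]
with $\eta = u - \bar{u}^I$. The essential trick is to rewrite the advective coarse-scale term as $-\int_\Omega \eta\,\bm{a}\cdot\nabla(\bar{v}+v') + \int_\Omega \eta\,\bm{a}\cdot\nabla v'$; the first piece is Cauchy--Schwarz'd elementwise against the SUPG-norm contribution $\|\tau_K^{1/2}\bm{a}\cdot\nabla(\bar{v}+v')\|_{0,\Omega_K}$, producing the key factor $\tau_K^{-1}\|\eta\|^2_{0,\Omega_K} \leq C(\|\bm{a}\|_{0,\infty,\Omega_K}/h_K + \kappa/h_K^2)h_K^{2p+2}|u|^2_{p+1,\Omega_K}$ which collapses to exactly $C(\|\bm{a}\|_{0,\infty,\Omega_K} h_K^{2p+1} + \kappa h_K^{2p})|u|^2_{p+1,\Omega_K}$. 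The residual advective--$v'$ term is bounded using the $(\kappa+\kappa_{art})|v'|^2_{1,\Omega_K}$ control in the SUPG norm (Assumption 2 providing $\kappa_{art}\gtrsim h_K\|\bm{a}\|_{0,\infty,\Omega_K}$ in the advection-dominated regime). The diffusive $v'$ term is handled by element-wise integration by parts, converting $\int_{\Omega_K} v'\nabla\cdot(\kappa\nabla\eta)$ into a volume integral $-\int_{\Omega_K}\kappa\nabla\eta\cdot\nabla v'$ (bounded by $\kappa^{1/2}|v'|_{1,\Omega_K}$) plus a boundary term $\int_{\Gamma_K}\kappa\nabla\eta\cdot\bm{n}\,v'$, bounded via a trace inequality on $\eta$ paired with the penalty contribution $(C_{pen}\kappa/h_K)^{1/2}\|v'\|_{0,\Gamma_K}$ of the SUPG norm. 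The remaining coarse-scale diffusive cross term yields $\kappa h^{2p}$ by direct Cauchy--Schwarz on $\kappa^{1/2}|\bar{v}|_{1,\Omega}$.

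The main obstacle is the bookkeeping in this last step: each of the half-dozen interaction terms must be split so that one factor matches a component of the SUPG norm on $\mathbf{V}^h$ while the other, after summation and interpolation, contributes exactly to the advertised rate. The subtlety is that the single scaling $\tau_K^{-1}\sim\|\bm{a}\|_{0,\infty,\Omega_K}/h_K + \kappa/h_K^2$ is sharp simultaneously in the advection-dominated regime (recovering $\|\bm{a}\|h^{2p+1}$) and in the diffusion-dominated regime (recovering $\kappa h^{2p}$), so one genuinely needs Assumption 2 on $\tau_K$ to unify both limits into a single bound. Assumption 3 enters only indirectly, through Corollary 4.3, to upgrade the coercivity-based $\|\cdot\|_S$ norm of Theorem 4.2 to the stronger $\|\cdot\|_{SUPG}$ norm in which the final estimate is phrased.
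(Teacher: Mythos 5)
Your proposal follows the same skeleton as the paper's proof: consistency yielding Galerkin orthogonality, the split $\mathbf{E}=\boldsymbol{\eta}+\boldsymbol{\xi}$ into interpolation and discrete (method) errors, Corollary 4.3 applied to the discrete part, a term-by-term bound of $\mathbf{B}(\boldsymbol{\eta},\mathbf{V}^h)$ whose crux is pairing $\|\tau_K^{-1/2}\eta\|_{0,\Omega_K}$ against the streamline component of the SUPG norm so that $\tau_K^{-1}\sim \|\bm{a}\|_{0,\infty,\Omega_K}/h_K + \kappa/h_K^2$ produces both rates at once, and standard local interpolation estimates; the paper does exactly this, with a generic ``best'' interpolant in place of your Scott--Zhang operator, and your observation that Assumption 3 enters only through Corollary 4.3 is also how the paper uses it.

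However, your treatment of the advective interaction terms has a genuine gap. You split $-\int_\Omega \eta\,\bm{a}\cdot\nabla\bar{v}$ into the combined streamline piece plus a leftover $\int_\Omega \eta\,\bm{a}\cdot\nabla v'$, and you bound the leftover by the $\kappa_{art}|v'|^2_{1,\Omega_K}$ component of the SUPG norm, invoking $\kappa_{art}\gtrsim h_K\|\bm{a}\|_{0,\infty,\Omega_K}$ ``from Assumption 2.'' But Assumption 2 only requires $C_{art}\geq 0$: the theorem's hypotheses expressly permit $\kappa_{art}=0$ (Assumption 3 can hold with $C_{art}=0$ via Condition C1 of Lemma 4.4), in which case your bound degenerates and the argument fails; robustness in $\bm{a}$ and $\kappa$ cannot be salvaged this way. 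Relatedly, you never specify how the fine-scale advective term $\sum_K \int_{\Omega_K} v'\,\bm{a}\cdot\nabla\eta$ arising from $C'(\eta,v')$ is absorbed, and the SUPG norm gives no direct control of $\|v'\|_{0,\Omega_K}$, so a raw Cauchy--Schwarz on that term does not close either. The repair is precisely the paper's route: integrate $\int_{\Omega_K} v'\,\bm{a}\cdot\nabla\eta$ by parts (equivalently, combine it with your leftover using $\nabla\cdot\bm{a}=0$, since $\int_{\Omega_K}\eta\,\bm{a}\cdot\nabla v' + \int_{\Omega_K} v'\,\bm{a}\cdot\nabla\eta = \int_{\Gamma_K} a_n\,\eta\,v'$), so the volume terms merge into $-\int_{\Omega_K}\eta\,\bm{a}\cdot\nabla(\bar{v}+v')$, controlled by $\|\tau_K^{-1/2}\eta\|_{0,\Omega_K}\,\|\tau_K^{1/2}\bm{a}\cdot\nabla(\bar{v}+v')\|_{0,\Omega_K}$, plus the flux term $\int_{\Gamma_K} a_n\,\eta\,v'$, bounded by $\|a_n^{1/2}\eta\|_{0,\Gamma_K}\|a_n^{1/2}v'\|_{0,\Gamma_K}$ using the boundary component of the SUPG norm and yielding the $\|\bm{a}\|_{0,\infty,\Omega_K}h_K^{2p+1}$ contribution through the trace interpolation estimate. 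With that substitution no positivity of $\kappa_{art}$ is needed, and the rest of your argument (diffusive volume and boundary terms via the penalty weight, and the direct bound of $\|\boldsymbol{\eta}\|_{SUPG}$) matches the paper.
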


\begin{proof}
Let $\tilde{u} \in \overline{\mathcal{V}}$ be an interpolation function which we will define later, and let us split the error into two components, a method error defined as $\textbf{E}^h = (\tilde{u}-\bar{u},-u') = (e^h,e')$ and an interpolation error defined as $\boldsymbol{\eta} = (u-\tilde{u},0) = (\eta,0)$.  Our first objective is to bound the method error by the interpolation error.  By Corollary 4.3, we have that:
\begin{equation}
\beta_{SUPG} \| \textup{\textbf{E}}^h \|_{SUPG} \leq \sup_{\textup{\textbf{V}}^h \in \bm{\mathcal{V}}^h} \frac{\textup{\textbf{B}}(\textup{\textbf{E}}^h,\textup{\textbf{V}}^h)}{\| \textup{\textbf{V}}^h \|_{SUPG} }. \label{eq:apriori_1}
\end{equation}
Furthermore, it is readily shown that our method is consistent, that is, $\textup{\textbf{B}}(\textup{\textbf{E}},\textup{\textbf{V}}^h) = 0$, so we further have that:
\begin{equation}
\beta_{SUPG} \| \textup{\textbf{E}}^h \|_{SUPG} \leq \sup_{\textup{\textbf{V}}^h \in \bm{\mathcal{V}}^h} \frac{-\textup{\textbf{B}}(\boldsymbol{\eta},\textup{\textbf{V}}^h)}{\| \textup{\textbf{V}}^h \|_{SUPG} }. \label{eq:apriori_2}
\end{equation}
We now require a bound on the term $\textup{\textbf{B}}(\boldsymbol{\eta},\textup{\textbf{V}}^h)$.  Direct substitution results in:
\begin{equation}
\textup{\textbf{B}}(\boldsymbol{\eta},\textup{\textbf{V}}^h) = \bar{B}(\eta,\bar{v}) + C'(\eta,v'). \label{eq:apriori_3}
\end{equation}
We can bound the first term on the right hand side of \eqref{eq:apriori_3} as:
\begin{align}
|\bar{B}(\eta,\bar{v})| & \leq \left|\int_{\Omega} \eta \bm{a} \cdot \nabla \bar{v}\right| + \left|\int_{\Omega} \kappa \nabla \eta \cdot \nabla \bar{v}\right| \nonumber \\
& \leq \sum_{K=1}^{n_{el}} \left( \left\| \tau_K^{-1/2} \eta \right\|_{0,\Omega_K} \left\| \tau_K^{1/2} \bm{a} \cdot \nabla \bar{v} \right\|_{0,\Omega_K} + \kappa \left| \eta \right|_{1,\Omega_K} \left| \bar{v} \right|_{1,\Omega_K} \right). \label{eq:apriori_4}
\end{align}
The second term on the right hand side of \eqref{eq:apriori_3} requires more care.  We first integrate by parts to obtain:
\begin{align}
C'(\eta,v') &= \sum_{K=1}^{n_{el}} \int_{\Omega_K} v' \left( \bm{a} \cdot \nabla \eta  - \nabla \cdot \left( \kappa \nabla \eta \right) \right) \nonumber \\
&= \sum_{K=1}^{n_{el}} \left( -\int_{\Omega_K} \eta \bm{a} \cdot \nabla v' + \int_{\Gamma_K} \eta v' a_n  + \int_{\Omega_K} \kappa \nabla \eta \cdot \nabla v' - \int_{\Gamma_K} \kappa \nabla \eta \cdot \bm{n} v' \right). \label{eq:apriori_5}
\end{align}
To proceed, we require bounds for each of the four terms on the right hand side of \eqref{eq:apriori_5}.  The first and third terms are easily bounded as before, yielding:
\begin{align}
\left| \int_{\Omega_K} \eta \bm{a} \cdot \nabla v' \right| & \leq \left\| \tau_K^{-1/2} \eta \right\|_{0,\Omega_K} \left\| \tau_K^{1/2} \bm{a} \cdot \nabla v' \right\|_{0,\Omega_K} \label{eq:apriori_6} \\
\left| \int_{\Omega_K} \kappa \nabla \eta \cdot \nabla v' \right|  & \leq \kappa \left| \eta \right|_{1,\Omega_K} \left| v' \right|_{1,\Omega_K}. \label{eq:apriori_7}
\end{align}
Similarly, we can bound the second and fourth terms like:
\begin{align}
\left| \int_{\Gamma_K} \eta v' a_n \right| & \leq \left\| a_n^{1/2} \eta \right\|_{0,\Gamma_K} \left\| a_n^{1/2} v' \right\|_{0,\Gamma_K} \label{eq:apriori_8} \\
\left| \int_{\Gamma_K} \kappa \nabla \eta \cdot \bm{n} v' \right|  & \leq \kappa \left\| \nabla \eta \cdot \bm{n} \right\|_{0,\Gamma_K} \left\| v' \right\|_{0,\Gamma_K}. \label{eq:apriori_9}
\end{align}
Collecting \eqref{eq:apriori_3}-\eqref{eq:apriori_9} and applying the Cauchy-Schwarz inequality, we obtain the composite inequality:
\begin{align}
\frac{|\textup{\textbf{B}}(\boldsymbol{\eta},\textup{\textbf{V}}^h)|}{\| \textup{\textbf{V}}^h \|_{SUPG}} &\leq \left( \sum_{K=1}^{n_{el}} \left( 2 \left\| \tau_K^{-1/2} \eta \right\|^2_{0,\Omega_K} + 2 \kappa \left| \eta \right|^2_{1,\Omega_K} + \left\| a_n^{1/2} \eta \right\|^2_{0,\Gamma_K} + \frac{\kappa h_K}{C_{pen}} \left\| \nabla \eta \cdot \bm{n} \right\|^2_{0,\Gamma_K} \right) \right)^{1/2}. \nonumber
\end{align}
Combining the above expression with \eqref{eq:apriori_2}, we finally obtain the method error bound:
\begin{align}
\| \textup{\textbf{E}}^h \|^2_{SUPG} &\leq \beta^{-1/2}_{SUPG} \sum_{K=1}^{n_{el}} \left( 2 \left\| \tau_K^{-1/2} \eta \right\|^2_{0,\Omega_K} + 2 \kappa \left| \eta \right|^2_{1,\Omega_K} + \left\| a_n^{1/2} \eta \right\|^2_{0,\Gamma_K} + \frac{\kappa h_K}{C_{pen}} \left\| \nabla \eta \cdot \bm{n} \right\|^2_{0,\Gamma_K} \right). \label{eq:apriori_10}
\end{align}
Now suppose we have chosen the interpolant $\tilde{u}$ to be a ``best'' interpolant such that the following local interpolation estimates hold for every element $\Omega_K \in \mathcal{M}$:
\begin{align}
\left\| \eta \right\|^2_{0,\Omega_K} &\leq C_{shape} h_K^{2p+2} | u |^2_{p+1,\Omega_K} \nonumber \\
\left| \eta \right|^2_{1,\Omega_K} &\leq C_{shape} h_K^{2p} | u |^2_{p+1,\Omega_K} \nonumber \\
\left\| \eta \right\|^2_{0,\Gamma_K} &\leq C_{shape} h_K^{2p+1} | u |^2_{p+1,\Omega_K} \nonumber \\
\left\| \nabla \eta \cdot \bm{n} \right\|^2_{0,\Gamma_K} &\leq C_{shape} h_K^{2p-1} | u |^2_{p+1,\Omega_K} \nonumber
\end{align}
where $C_{shape} > 0$ is a positive constant independent of the mesh size but possibly dependent on the mesh regularity and coarse-scale polynomial degree $p$ \cite{Ciarlet02}.  It then follows that:
\begin{align}
\left\| \tau_K^{-1/2} \eta \right\|^2_{0,\Omega_K} &= \tau_K^{-1} \left\| \eta \right\|^2_{0,\Omega_K} \nonumber \\
&\leq C_{shape} C_{\tau}^{-1} \max\left\{ \frac{ \| \bm{a} \|_{0,\infty,\Omega_K}}{h_K}, \frac{C_{inv}\kappa}{h_K^2} \right\} h_K^{2p+2} | u |^2_{p+1,\Omega_K} \label{eq:apriori_11} \\
\kappa \left| \eta \right|^2_{1,\Omega_K} &\leq C_{shape} \kappa h_K^{2p} | u |^2_{p+1,\Omega_K} \label{eq:apriori_12} \\
\left\| a_n^{1/2} \eta \right\|^2_{0,\Gamma_K} &\leq \| \bm{a} \|_{0,\infty,\Omega_K} \left\| \eta \right\|^2_{0,\Gamma_K} \nonumber \\
&\leq C_{shape} \| \bm{a} \|_{0,\infty,\Omega_K} h_K^{2p+1} | u |^2_{p+1,\Omega_K} \label{eq:apriori_13} \\
\frac{\kappa h_K}{C_{pen}} \left\| \nabla \eta \cdot \bm{n} \right\|^2_{0,\Gamma_K} &\leq C_{shape} C_{pen}^{-1} \kappa h_K^{2p} | u |^2_{p+1,\Omega_K} \label{eq:apriori_14}
\end{align}
Combining \eqref{eq:apriori_11}-\eqref{eq:apriori_14} with the method error bound \eqref{eq:apriori_10} yields:
\begin{align}
\| \textup{\textbf{E}}^h \|^2_{SUPG} &\leq C_{method} \sum_{K=1}^{n_{el}} \left( \| \bm{a} \|_{0,\infty,\Omega_K} h^{2p+1}_K + \kappa h^{2p}_K \right) | u |^2_{p+1,\Omega_K} \label{eq:apriori_15}
\end{align}
where:
\begin{equation}
C_{method} = \beta_{SUPG}^{-1/2} C_{shape} \left( 2 C_{\tau}^{-1} + C_{pen}^{-1} + 2 \right). \nonumber
\end{equation}
The interpolation error may be bounded in a similar manner, resulting in:
\begin{align}
\| \boldsymbol{\eta} \|^2_{SUPG} &\leq C_{interpolation} \sum_{K=1}^{n_{el}} \left( \| \bm{a} \|_{0,\infty,\Omega_K} h^{2p+1}_K + \kappa h^{2p}_K \right) | u |^2_{p+1,\Omega_K} \label{eq:apriori_16}
\end{align}
where $C_{interpolation} > 0$ is a positive constant independent of the mesh size but possibly dependent on the mesh regularity and coarse-scale polynomial degree $p$.  The desired result follows by combining \eqref{eq:apriori_15} and \eqref{eq:apriori_16} with the decomposition $\textbf{E} = \textbf{E}^h + \boldsymbol{\eta}$.
\qed
\end{proof}

Note remarkably that the error estimate characterized by Theorem 5.5 is completely independent of the subscale polynomial degree.  Consequently, the subscales act purely to stabilize the coarse-scale solution and have virtually no impact on solution accuracy.  This is one of the main messages of the current work: it is possible to stabilize a high-order numerical method with a low-order, but stable, numerical method.

It should also be remarked that while the constants appearing in all of the above estimates are independent of the problem parameters $\bm{a}$ and $\kappa$ and the mesh size $h$, they are possibly dependent on mesh regularity, the coarse-scale polynomial degree $p$, and the subscale polynomial degree $p_f$.  A more delicate analysis is required to obtain dependencies with respect to these variables.  This would be useful, for instance, in the context of a boundary layer application wherein a highly skewed mesh should be employed near solid boundaries.

\begin{figure}[b!]
  \begin{center}
      \subfigure[L2-norm of error in coarse scales]{\includegraphics[width=8cm]{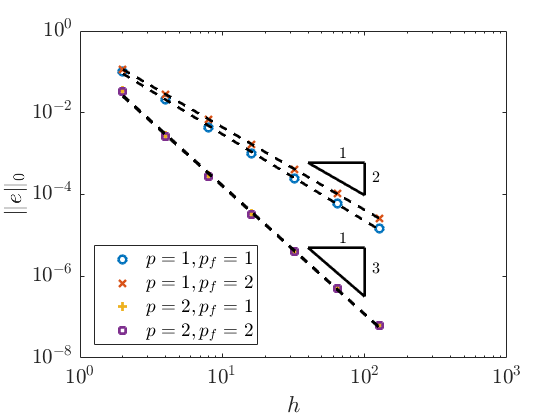}}
      \subfigure[L2-norm of fine scales]{\includegraphics[width=8cm]{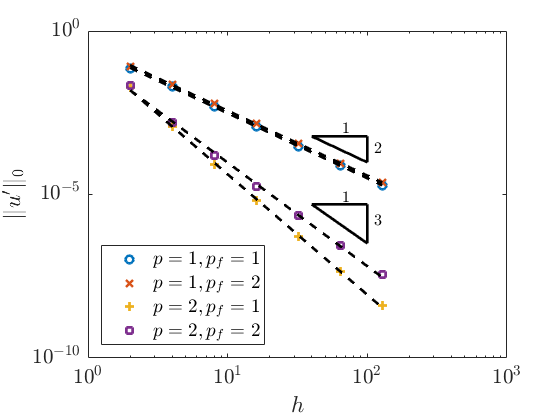}}
    \caption{Convergence for steady manufactured solution for varying polynomial degrees.}
    \label{fig:convergence}
  \end{center}
\end{figure}

\section{Numerical Results}

We finish this paper by applying our methodology to a collection of two-dimensional steady and unsteady transport problems.  All of the following results were obtained by applying the method of discontinuous subscales to isogeometric Non-Uniform Rational B-splines (NURBS) discretizations on uniform grids \cite{Hughes05}.  These discretizations employ $C^{p-1}$-continuous piecewise tensor-product polynomials or rational functions of degree $p$.  For $p = 1$, isogeometric NURBS discretizations correspond to standard bilinear finite elements, while for $p > 1$, isogeometric NURBS discretizations exhibit enhanced continuity as compared to standard tensor-product finite elements.  For more information on NURBS-based isogeometric analysis, refer to \cite{Cottrell09}.  Throughout, we used the following values for $C_{pen}$ and $\kappa_{art}$:
\begin{equation}\label{eq:C_ideal}
C_{pen} = 4\left( p_f + 1 \right)^2
\end{equation}
and:
\begin{equation}\label{eq:kart_ideal}
\kappa_{art} = \frac{h_K \left| \bm{a} \right|}{6 p_f}
\end{equation}
where $p_f$ is the subscale polynomial degree and $h_K$ is the mesh size for a given element $\Omega_K \in \mathcal{M}$.  These are inspired by theoretical estimates for the trace and inverse constants associated with a discontinuous subscale discretization \cite{Harari92,Warburton03}. Note that these values are simpler than the ones proposed in the previous section, but we have found that they still yield stable and accurate results.

\subsection{Steady manufactured solution}

We begin by considering a simple manufactured solution $u\left(x,y\right) = \sin \left(\pi x \right) \sin \left( \pi y \right)$ to the steady scalar transport problem.   This solution is realized on the domain $\Omega = \left[0,1\right] \times \left[0,1\right]$ by setting the velocity to $\bm{a} = \left( \sqrt{2}/2, \sqrt{2}/2 \right)$, setting the forcing to:
\begin{equation}
f \left(x,y\right) = \frac{\pi \sqrt{2}}{2} \left( \cos \left( \pi x \right) \sin \left( \pi y \right) + \sin \left( \pi x \right) \cos \left( \pi y \right) \right) + \pi^2 \kappa \left( 2 \sin \left( \pi x \right) \sin \left( \pi y \right) \right),
\end{equation}
and applying homogeneous Dirichlet boundary conditions along the entire domain boundary.  To ensure the problem is advection-dominated, the diffusion is set to $\kappa = 10^{-6}$.  The problem is then characterized by the P\'{e}clet number $Pe = \frac{| \bm{a} | L}{\kappa} = 10^{6}$ where $L$ is the length of the domain.  Numerical solutions are calculated on grids of $2 \times 2$, $4 \times 4$, $8 \times 8$, $16 \times 16$, $32 \times 32$, $64 \times 64$, and $128 \times 128$ elements for coarse-scale polynomial degrees $p = 1, 2$ and subscale polynomial degrees $p_f = 1, 2$.  The $L^2$-norms of the error in the coarse-scale solution and the subscale solution are presented in Fig. \ref{fig:convergence}.  We observe that both the coarse-scale and subscale solutions converge optimally under mesh refinement.

\subsection{Steady advection skew to the mesh}

We next consider the classical two-dimensional advection skew to mesh problem.  This problem is graphically depicted in Fig. \ref{fig:skew_setup}.  The P\'{e}clet number for the problem is $Pe = 10^6$, so the problem is advection-dominated.  Throughout this subsection, we select $\theta = 45^{\circ}$.

\begin{figure}[b!]
\begin{center}
\includegraphics[width=10cm]{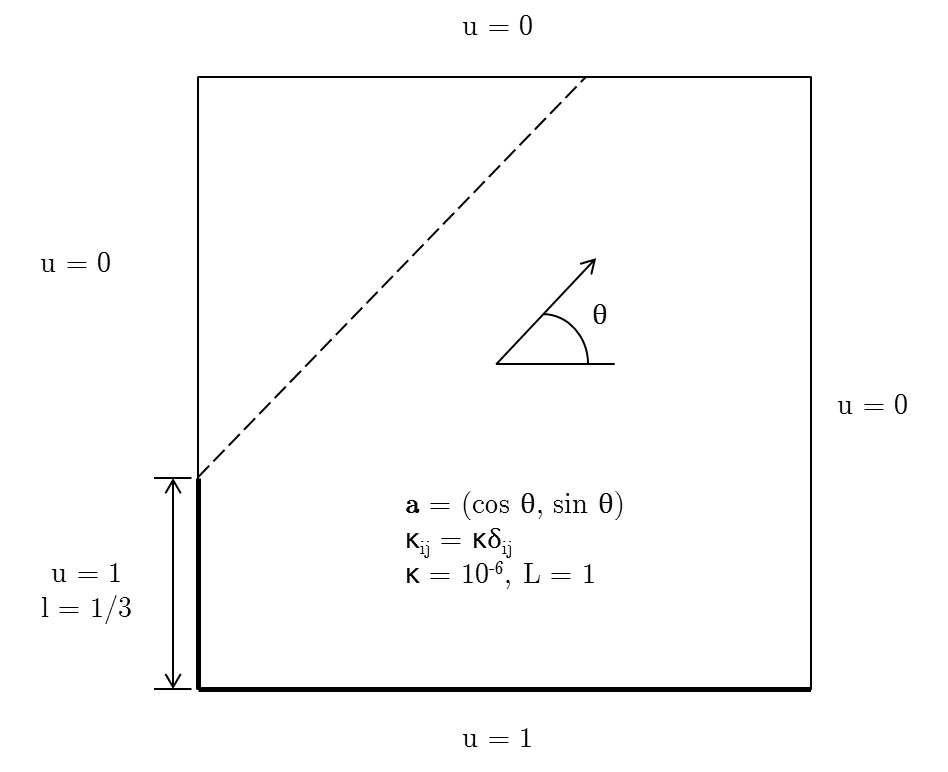}
\caption{Problem setup for the two-dimensional advection skew to the mesh problem.}
\label{fig:skew_setup}
\end{center}
\end{figure}

First, we explore the effect of mesh refinement.  Solutions on uniform NURBS meshes with $32 \times 32$, $64 \times 64$, and $128 \times 128$ elements for both $p = 1$ and $p = 2$ are computed and compared.  For each of the cases, the subscale polynomial degree is chosen to be $p_f = 1$.
A representative solution for $p = 2$, $p_f = 1$, and a $64 \times 64$ mesh is shown in Fig. \ref{fig:p2_pf1_skew}.  The effect of mesh refinement is illustrated by comparison of the obtained solutions along the slice $y = 0.7$ in Fig. \ref{fig:skew_slice}.  The results are as expected.  Refinement of the mesh allows for a more accurate representation of the boundary and internal layers.  The results are polluted by the presence of oscillations near the boundary and internal layers, but for $p = 1$, these oscillations are limited to a region of one or two elements away from the layers.  For $p = 2$, the oscillations do infiltrate further into the domain, but it is our experience that these oscillations are not due to method instability but rather the fact that we are trying to fit the sharp gradients present in the boundary and internal layers.  As isogeometric NURBS discretizations exhibit a high level of continuity, any oscillations due to overfitting such gradients are expected to extend into the domain in analogy with Gibbs' phenomena.  Nonetheless, the oscillations decay in magnitude away from the layers and are still limited to a region of a fixed number of elements away from the layers.  As we will later see, these oscillations can be greatly suppressed through the use of weak enforcement of Dirichlet boundary conditions.

\begin{figure}[t]
  \begin{center}
      \subfigure[Isometric view]{\includegraphics[width=8cm]{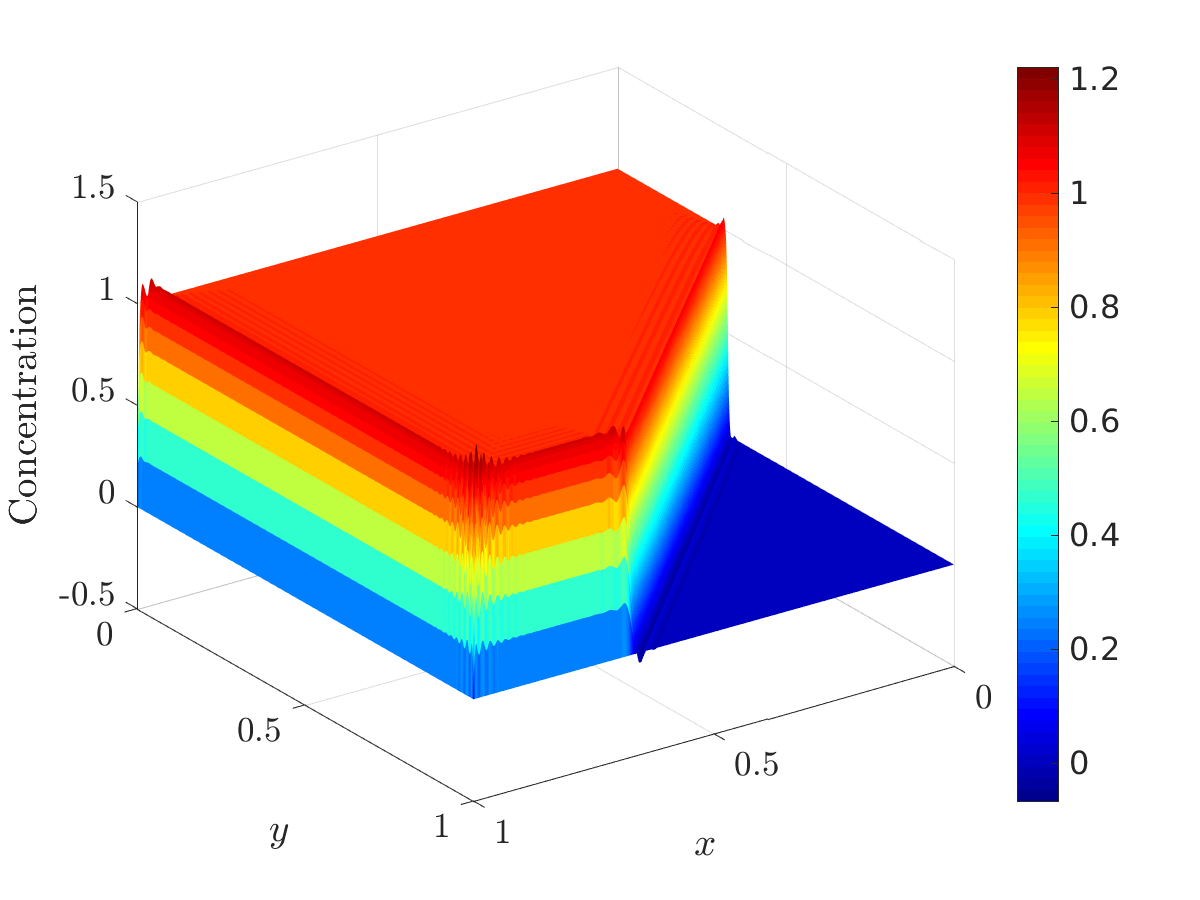}}
      \subfigure[Overhead view]{\includegraphics[width=8cm]{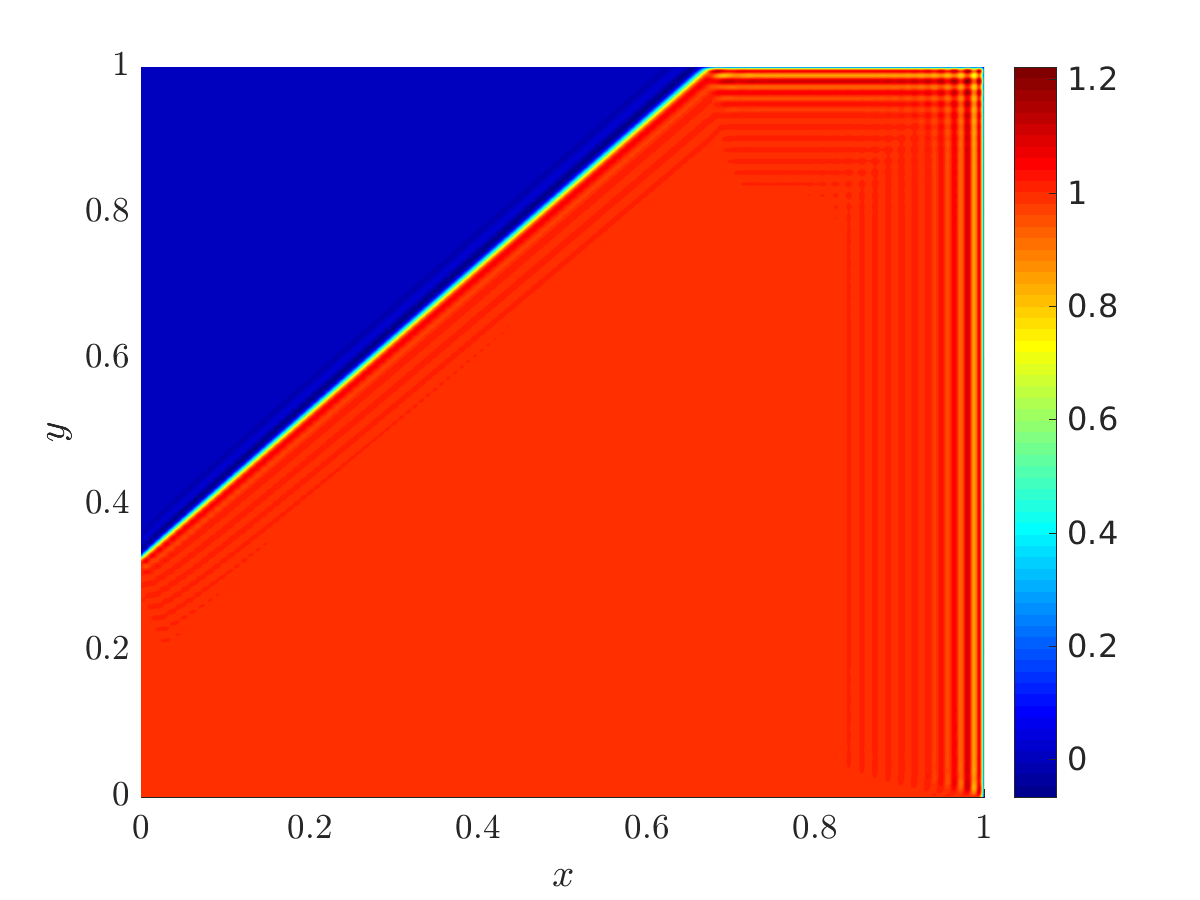}}
      \subfigure[Isometric view]{\includegraphics[width=8cm]{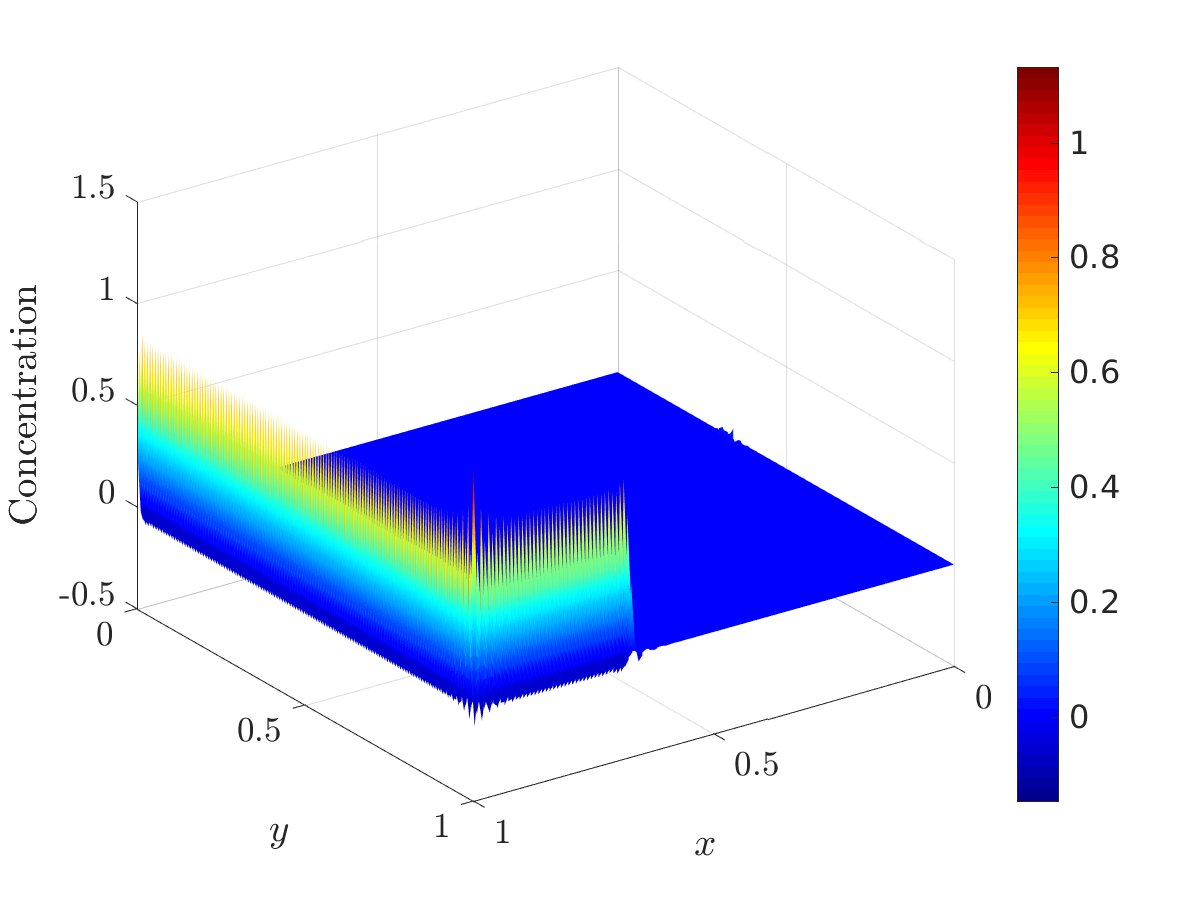}}
      \subfigure[Overhead view]{\includegraphics[width=8cm]{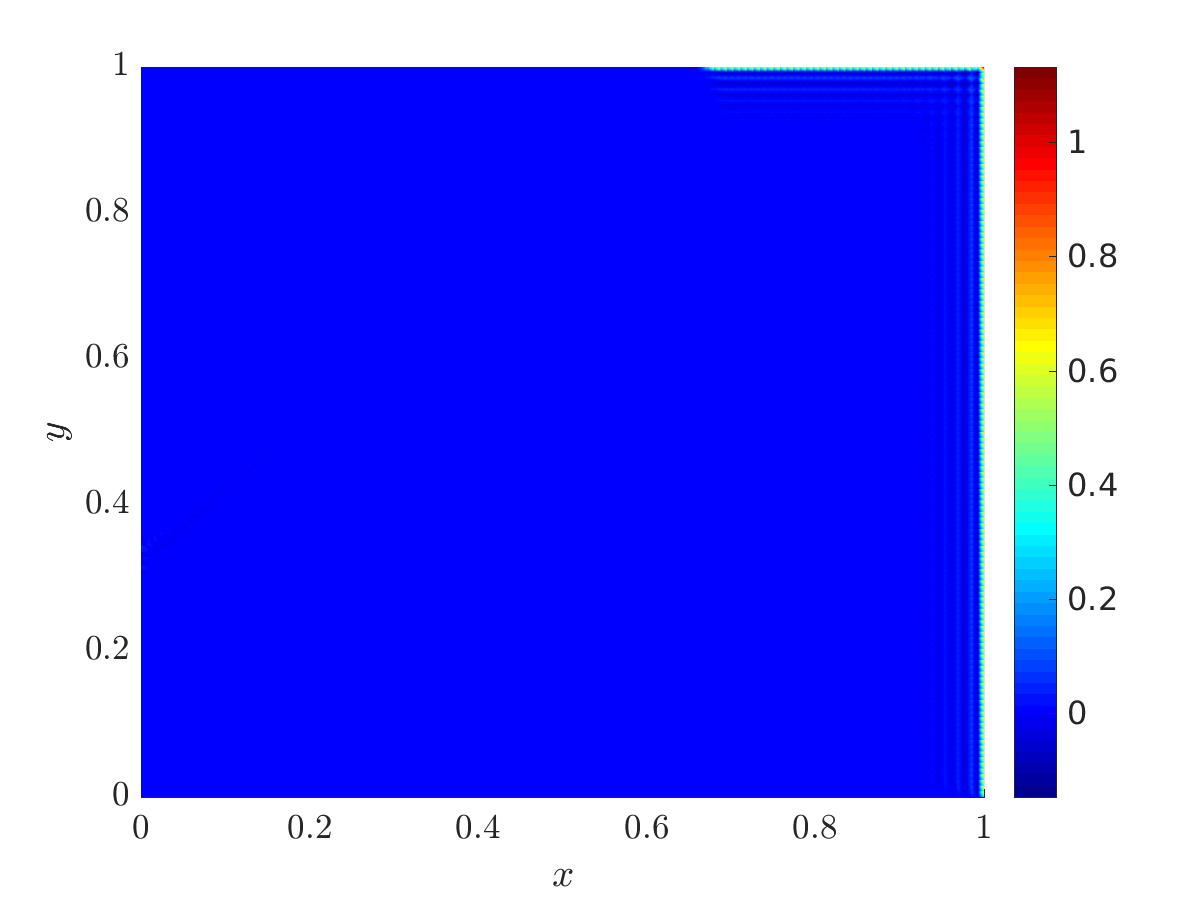}}
    \caption{Coarse-scale (top) and subscale (bottom) solutions for the advection skew to the mesh problem for $p = 2$, $p_f = 1$, and $h = 1/64$.}
    \label{fig:p2_pf1_skew}
  \end{center}
\end{figure}

\begin{figure}[ht]
\begin{center}
      \subfigure[Coarse-scale solution for $p = 1$, $p_f = 1$, and varying mesh size.]{\includegraphics[width=8cm]{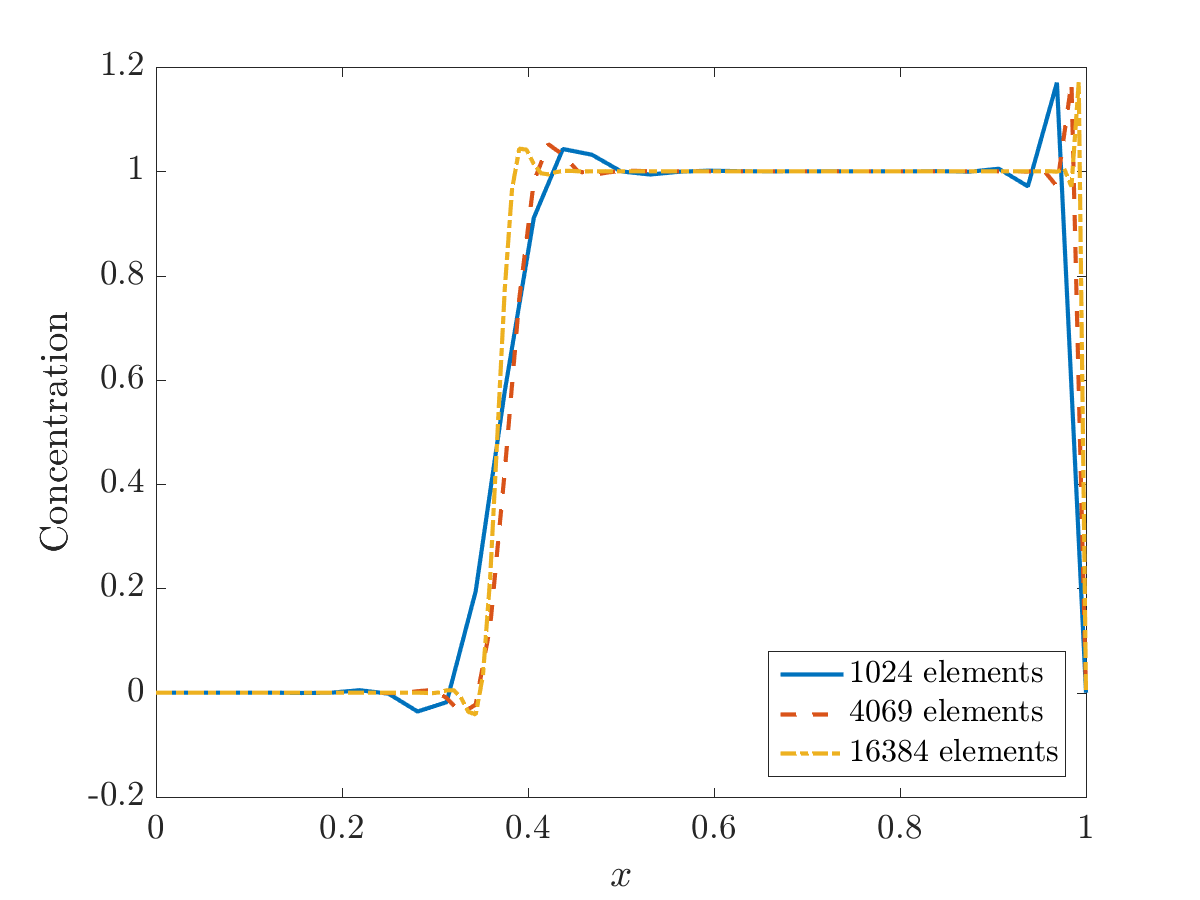}}
      \subfigure[Coarse-scale solution for $p = 2$, $p_f = 1$, and varying mesh size.]{\includegraphics[width=8cm]{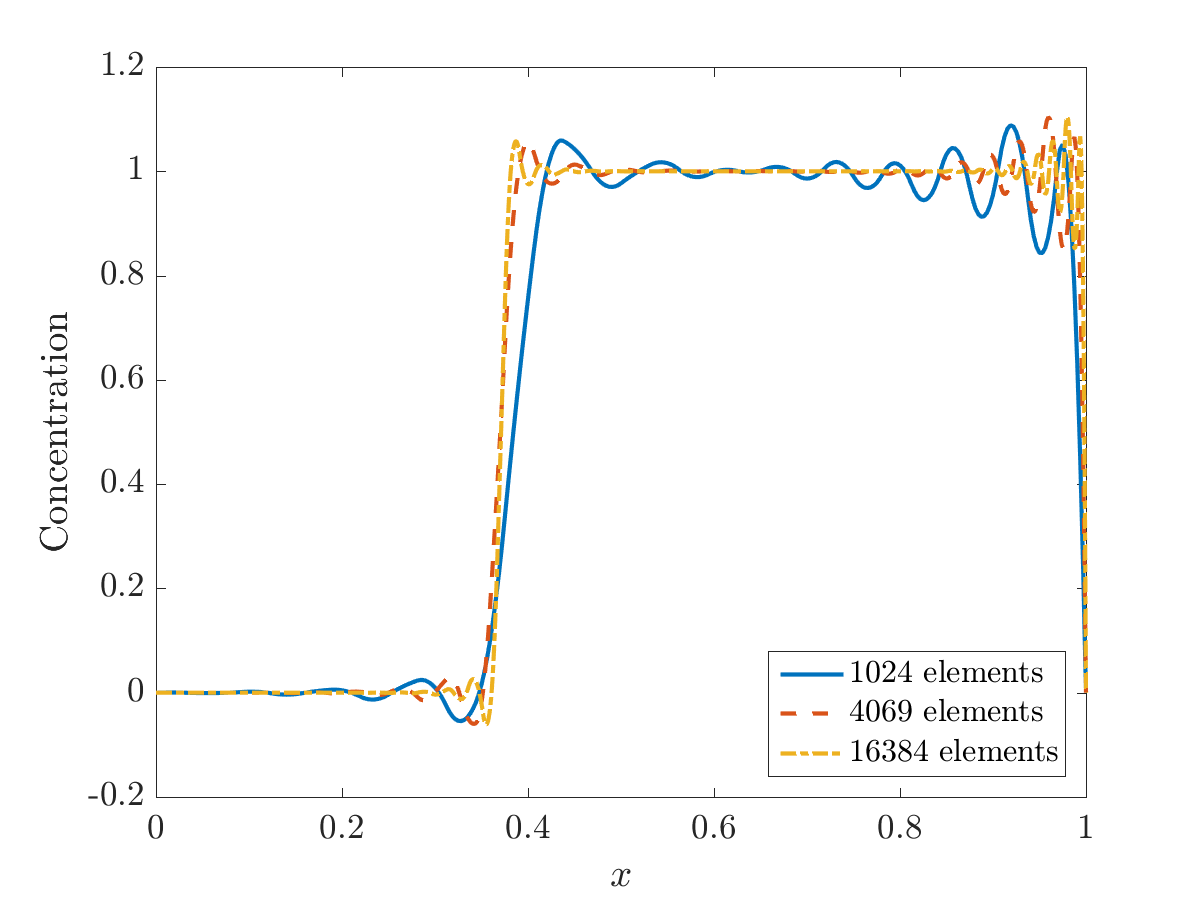}}
\caption{Coarse-scale solution along $y = 0.7$ for varying mesh sizes and coarse-scale polynomial degrees.}
\label{fig:skew_slice}
\end{center}
\end{figure}

We now explore the effect of degree elevation.  First, we examine the effect of coarse-scale degree elevation.  Solutions for $p = 1, 2, 3$ with $p_f = 1$ on a uniform NURBS mesh with $64 \times 64$ elements are computed and compared.  The effect of coarse-scale degree elevation is illustrated by comparison of the obtained solutions along the slice $y = 0.7$ in Fig. \ref{fig:skew_slice_degree_elevation}.  Note that each of the coarse-scale solutions are able to capture the internal and boundary layers, and the coarse-scale solutions associated with higher polynomial degrees exhibit sharper representations of the layers.  However, the coarse-scale solutions associated with higher polynomial degrees also exhibit oscillations in the regions near the internal and boundary layers due to overfitting and Gibbs' phenomena.  Next, we explore the effect of subscale degree elevation.  Solutions for $p = 2$ with $p_f = 1, 2, 3$  are computed on a uniform NURBS mesh with $64 \times 64$ elements and compared.  The effect of fine-scale degree elevation is illustrated by comparison of the obtained solutions along the slice $y = 0.7$ in Fig. \ref{fig:skew_slice_degree_elevation}.  From the figure, it is seen that the choice of subscale polynomial degree does not greatly affect the coarse-scale solution.  Consequently, it is recommended that one choose the lowest possible subscale polynomial degree in order to stabilize the higher-order coarse-scale solution at minimal computational cost.

\begin{figure}[ht]
\begin{center}
      \subfigure[Coarse-scale solution for $p_f = 1$, $h = 1/64$, and varying coarse-scale polynomial degrees.]{\includegraphics[width=8cm]{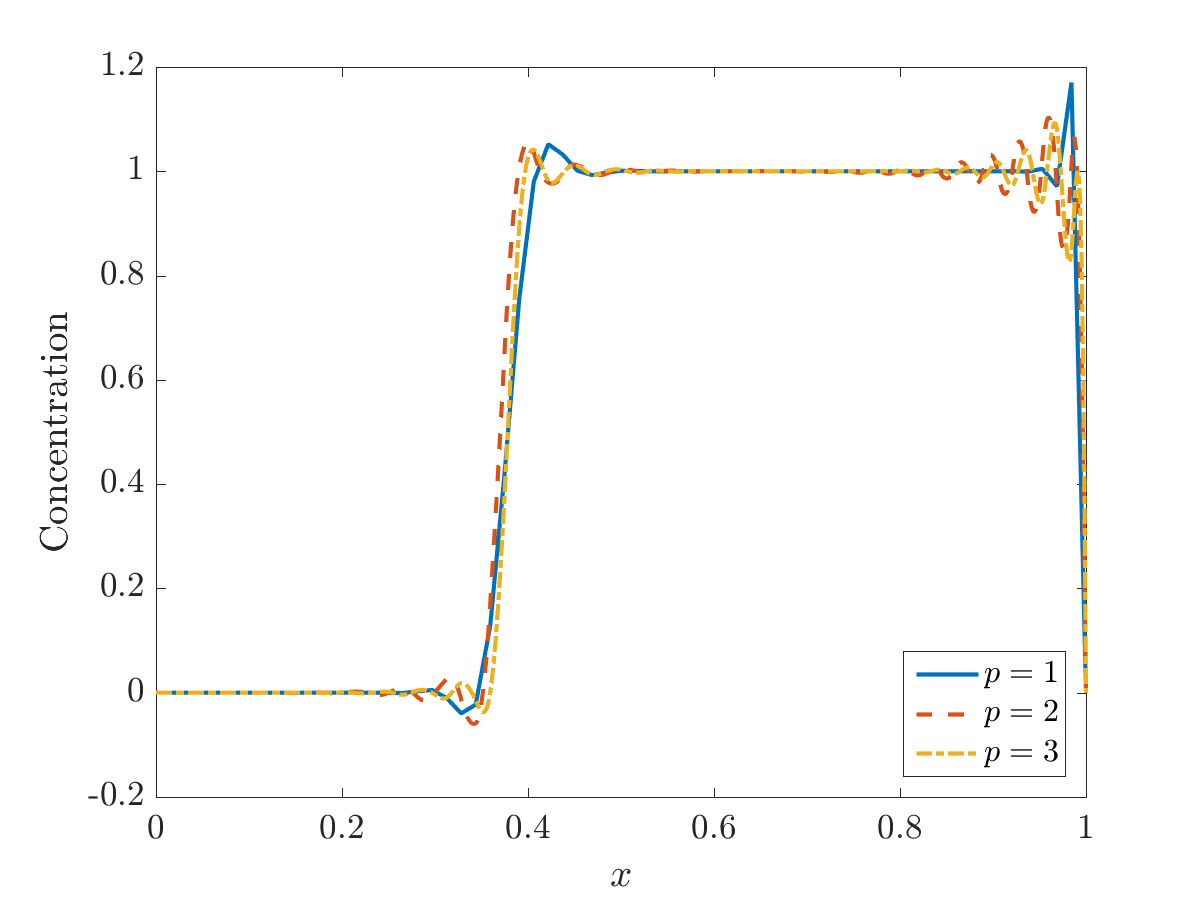}}
      \subfigure[Coarse-scale solution for $p = 2$, $h = 1/64$, and varying subscale polynomial degrees.]{\includegraphics[width=8cm]{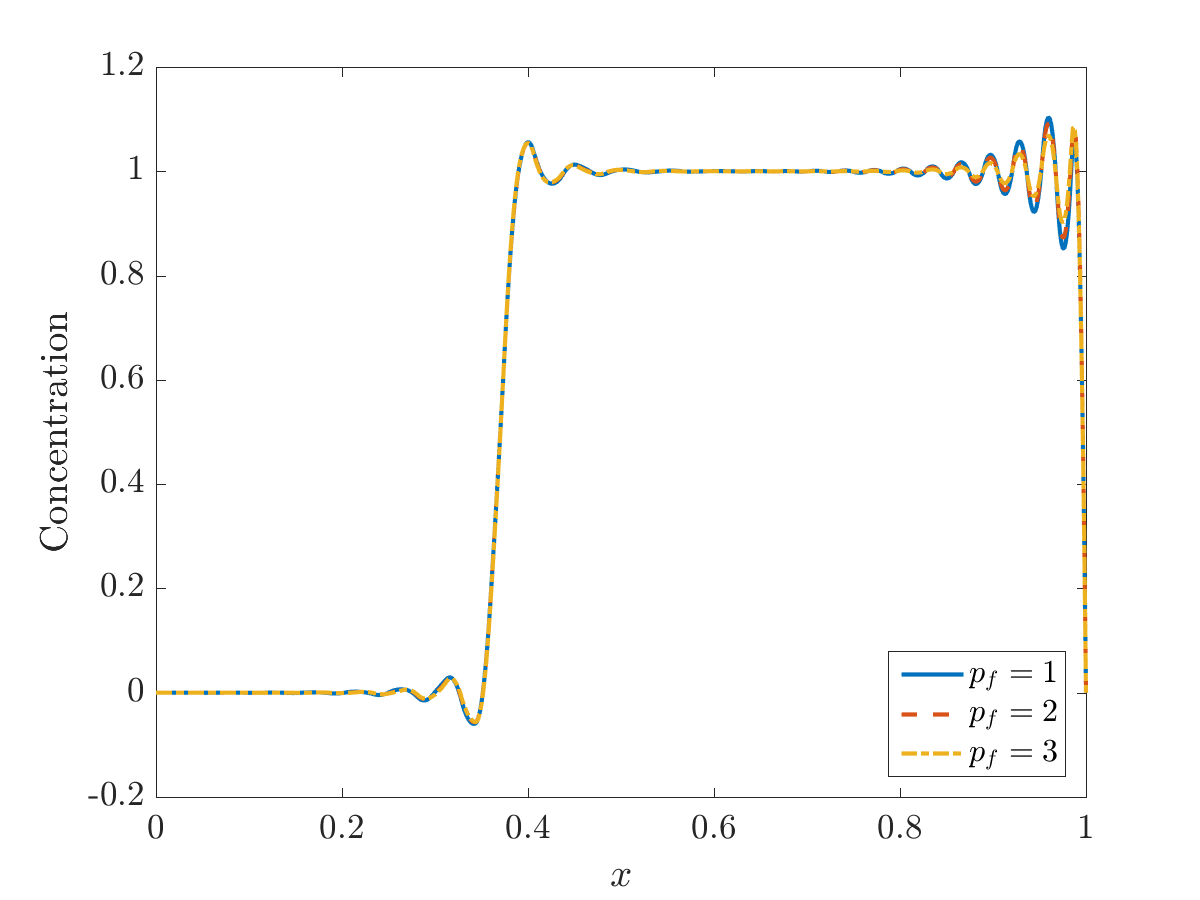}}
\caption{Coarse-scale solution along $y = 0.7$ for varying coarse-scale and subscale polynomial degrees.}
\label{fig:skew_slice_degree_elevation}
\end{center}
\end{figure}

Heretofore, we have enforced the Dirichlet boundary conditions on the coarse-scale solution field in a strong manner.  However, it has been noted in recent work that greatly enhanced results may be obtained by instead imposing these in a weak manner using a combination of upwinding and Nitsche's method \cite{Bazilevs07a,Bazilevs07c}.  The resulting coarse-scale governing equation takes the form:\\

\noindent \textbf{Coarse-Scale Governing Equation with Weak Boundary Condition Treatment:}
\begin{equation}
\begin{aligned}
\int_\Omega \frac{\partial \bar{u}}{\partial t} \bar{v} -  \int_\Omega \left(\bar{u} + u'\right) \bm{a} \cdot \nabla \bar{v} +  \int_\Omega \kappa \nabla \bar{u} \cdot \nabla \bar{v} + \int_{\Gamma_N} a_n \bar{u} \bar{v} \\ 
- \int_{\Gamma_D} \kappa \nabla \bar{u} \cdot \bm{n} \bar{v} - \int_{\Gamma_D} \bar{u} \kappa \nabla \bar{v} \cdot \bm{n} + \sum_{K = 1}^{n_{el}} \int_{\Gamma_K \cap \Gamma_D} \frac{C_{Nitsche} \kappa}{h_K} \bar{u} \bar{v} + \int_{\Gamma_D} a^+_n \bar{u} \bar{v} \\
= \int_\Omega f \bar{v} + \int_{\Gamma_N} h \bar{v} - \int_{\Gamma_D} g \kappa \nabla \bar{v} \cdot \bm{n} + \sum_{K = 1}^{n_{el}} \int_{\Gamma_K \cap \Gamma_D} \frac{C_{Nitsche} \kappa}{h_K} g \bar{v} - \int_{\Gamma_D} a^-_n g \bar{v} \\
\textup{ for all } \bar{v} \in \bar{\mathcal{V}}
\end{aligned}
\end{equation}

\noindent In the above equation, $C_{Nitsche}$ is a penalty constant that must be chosen sufficiently large for method stability, $a^+_n = \left(\bm{a} \cdot \bm{n}\right)^+ = \max\left\{ \bm{a} \cdot \bm{n}, 0 \right\}$, and $a^-_n = \left(\bm{a} \cdot \bm{n}\right)^- = \min\left\{ \bm{a} \cdot \bm{n}, 0 \right\}$.  Herein, we choose $C_{Nitsche} = 4p$.  It should be mentioned that the discontinuous subscale model remains untouched if one elects to weakly enforce Dirichlet boundary conditions on the coarse-scale solution.

To assess the effect of weak boundary condition enforcement, we have computed solutions using both using strongly-enforced and weakly-enforced Dirichlet boundary conditions with $p = 2$ and $p_f = 1$  on a uniform NURBS mesh with $64 \times 64$ elements.  The effect of boundary condition enforcement is illustrated by comparison of the obtained solutions along the slice $y = 0.7$ in Fig. \ref{fig:skew_slice_BC_enforcement}.  Note from the figure that there no longer remain any oscillations near the boundary layer if one employs a weak boundary condition enforcement.  Moreover, while there remain oscillations near the internal layer, these oscillations are small in magnitude and limited to a region of a small number of elements away from the layer.  Consequently, it is strongly advised that one weakly enforces Dirichlet boundary conditions along portions of the boundary where a layer is expected.

\begin{figure}[t]
\begin{center}
\includegraphics[width=10cm]{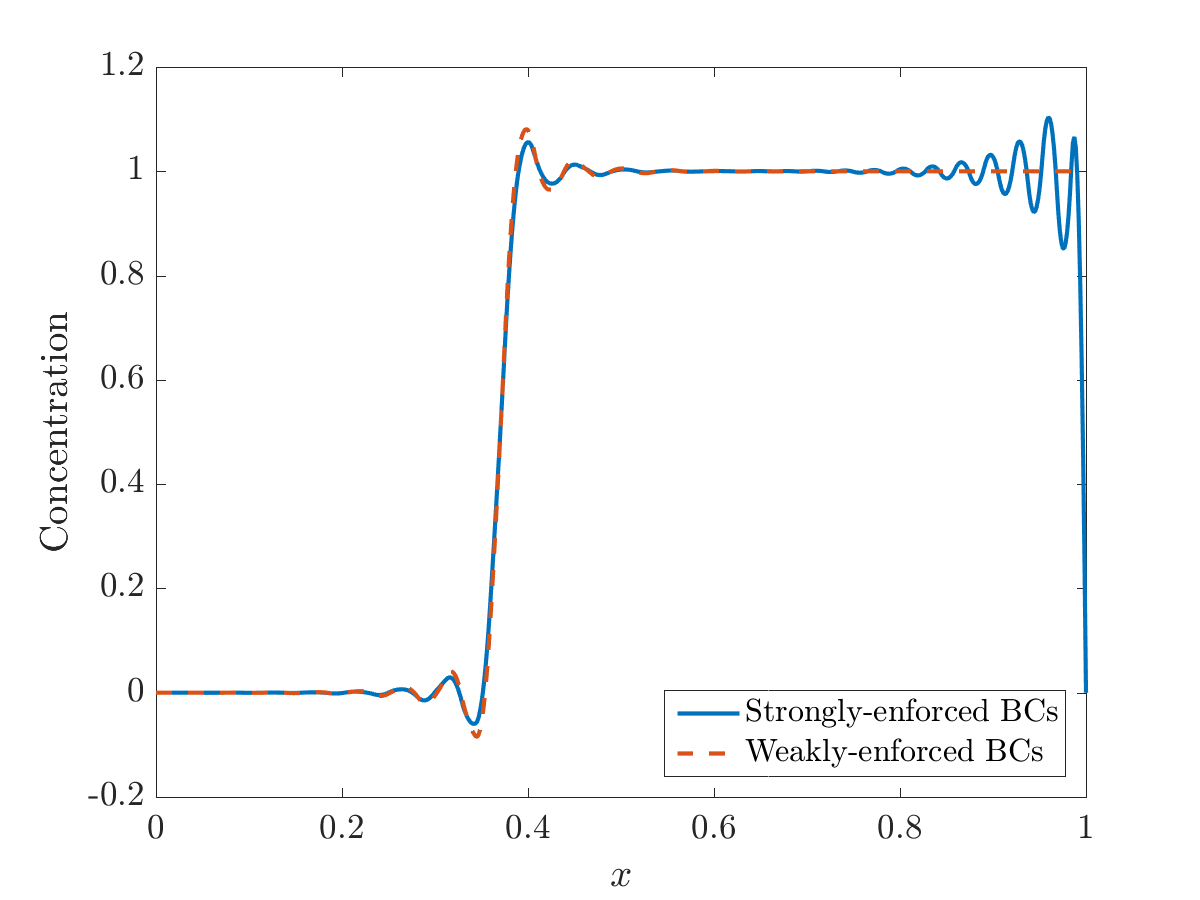}
\caption{Coarse-scale solution along $y = 0.7$ for varying boundary-condition enforcement with $p = 2$, $p_f = 1$, and $h = 1/64$.}
\label{fig:skew_slice_BC_enforcement}
\end{center}
\end{figure}

\subsection{Steady advection in a quarter-annulus}

We next consider a problem posed on a non-square geometry, namely the advection of a Gaussian curve in a quarter annulus.  This problem is graphically depicted in Fig. \ref{fig:annulus_setup}.  Here, $r_i$ is the inner radius of the annulus and $r_o$ is the outer radius of the annulus and they are chosen to be $r_i = 1$ and $r_o = 2$.  The flow field for this problem is chosen to be $\bm{a} = \left( y, -x \right)$ and the diffusivity is chosen as $\kappa = 1 \times 10^6$ such that the problem is advection-dominated.  Dirichlet boundary conditions are strongly enforced on the lower boundary of the annulus with:
\begin{equation}\label{eq:annulus_bc}
g\left(x,y=0\right) = \frac{1}{\sigma \sqrt{2 \pi}} \exp \left(\frac{(-x-\mu)^2}{2 \sigma^2}\right),
\end{equation}
where $\sigma = 0.05$ and $\mu = -1.5$.  Traction-free Neumann boundary conditions are prescribed on the other three boundaries.  A typical solution corresponding to $p = 2$, $p_f = 1$, and a NURBS mesh of $64 \times 64$ elements is displayed in Fig. \ref{fig:annulus_isometric}.  From the figure, it is clear that the method is able to very accurately capture the solution free of oscillations on the curved geometry.

\begin{figure}[t]
\begin{center}
\includegraphics[width=8cm]{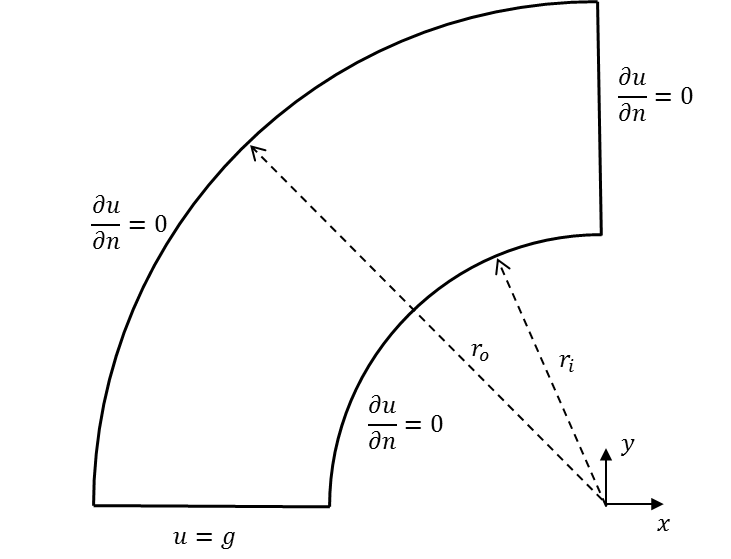}
\caption{Problem setup for the advection in a quarter-annulus problem.}
\label{fig:annulus_setup}
\end{center}
\end{figure}

\begin{figure}[t]
  \begin{center}
      \subfigure[Isometric view]{\includegraphics[width=8cm]{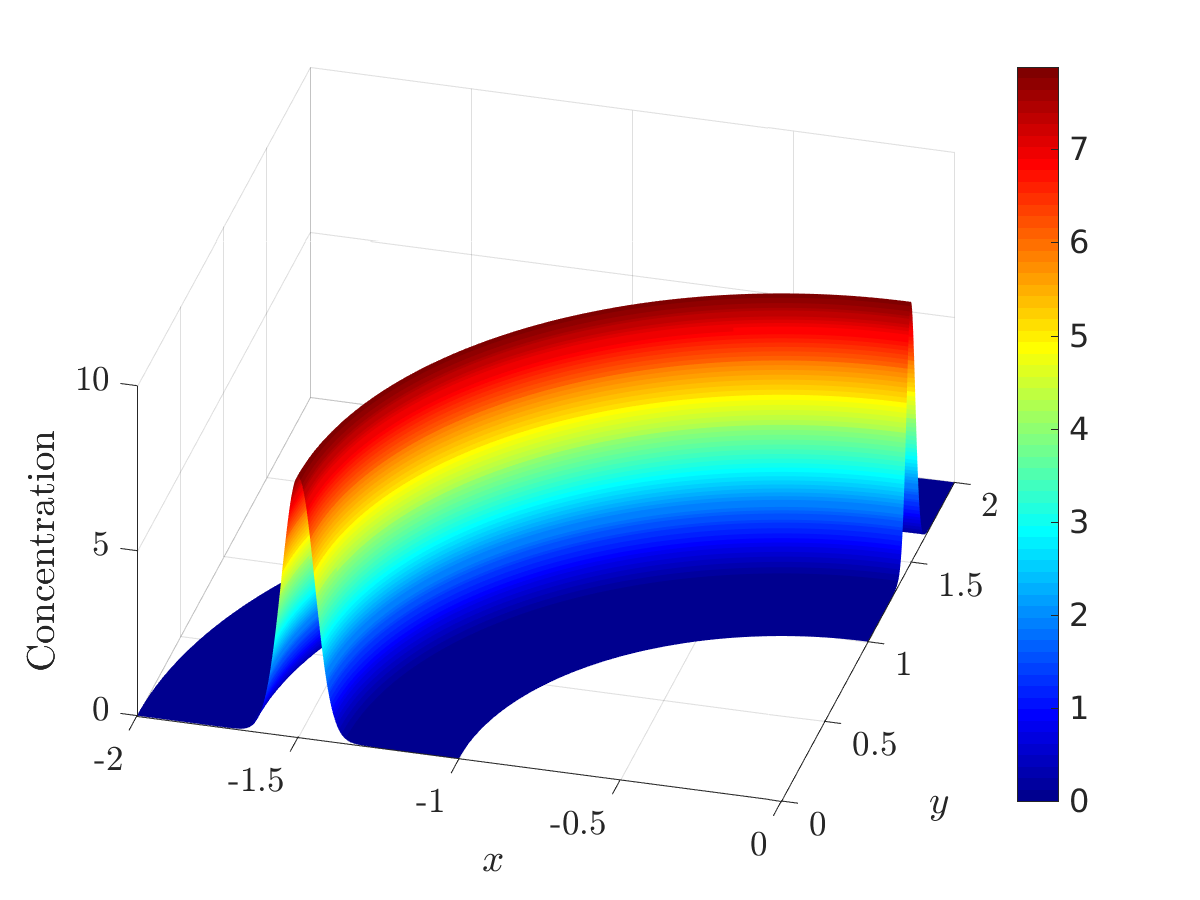}}
      \subfigure[Overhead view]{\includegraphics[width=8cm]{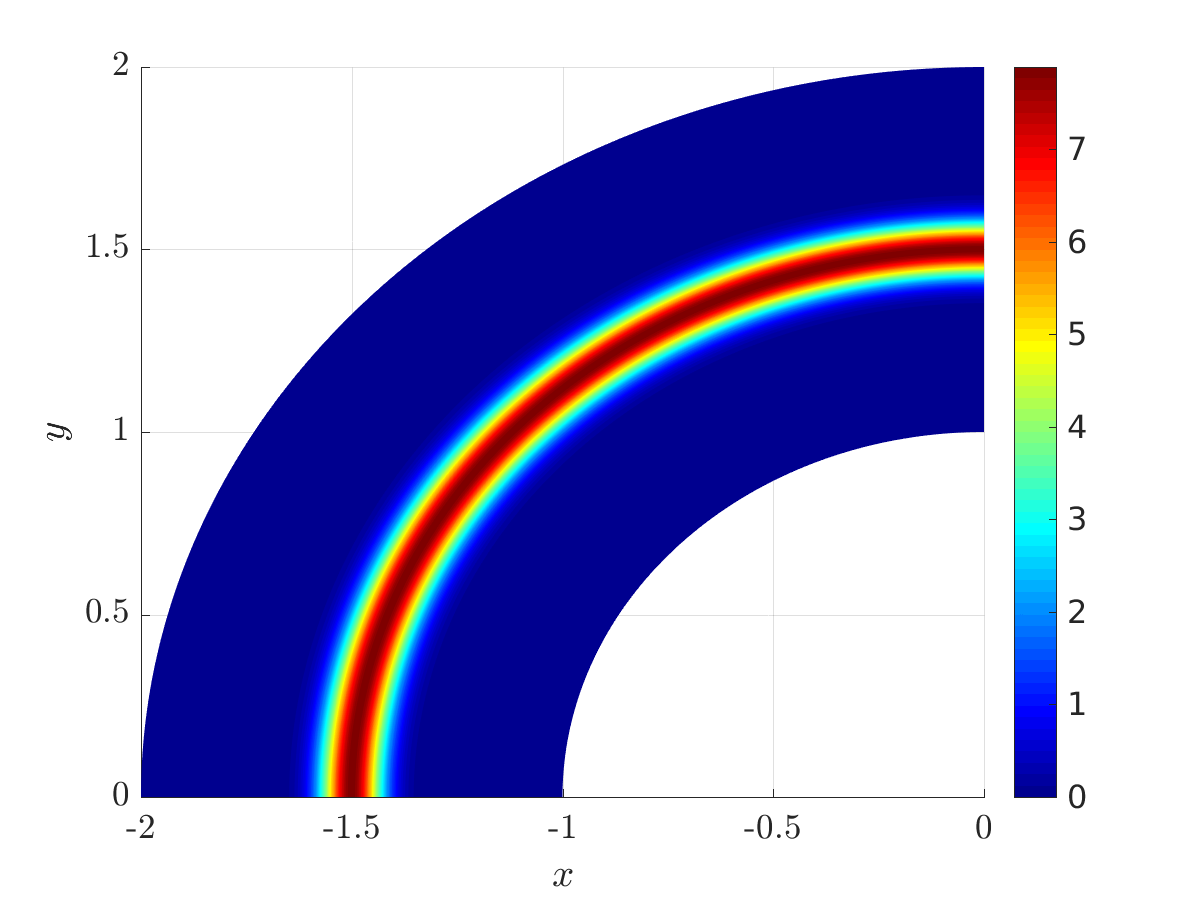}}
      \subfigure[Isometric view]{\includegraphics[width=8cm]{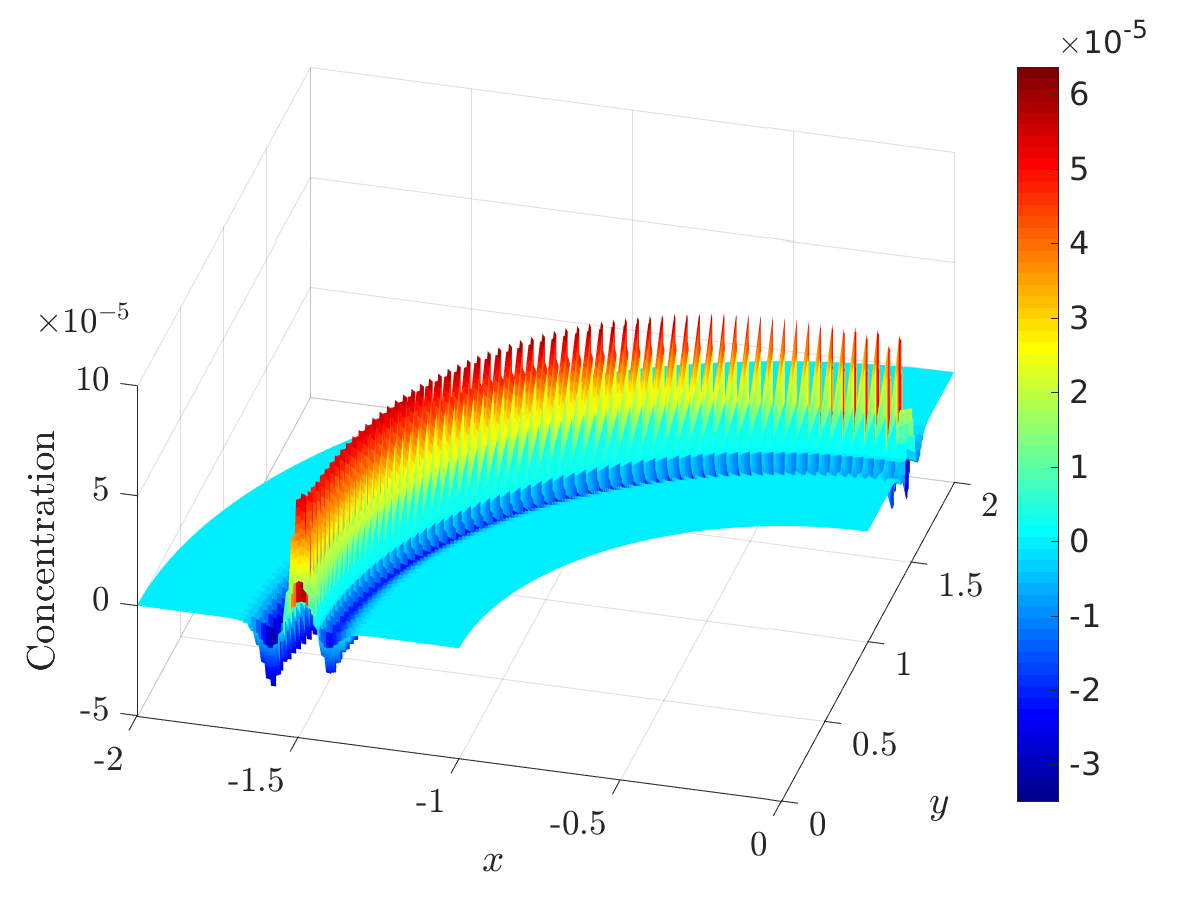}}
      \subfigure[Overhead view]{\includegraphics[width=8cm]{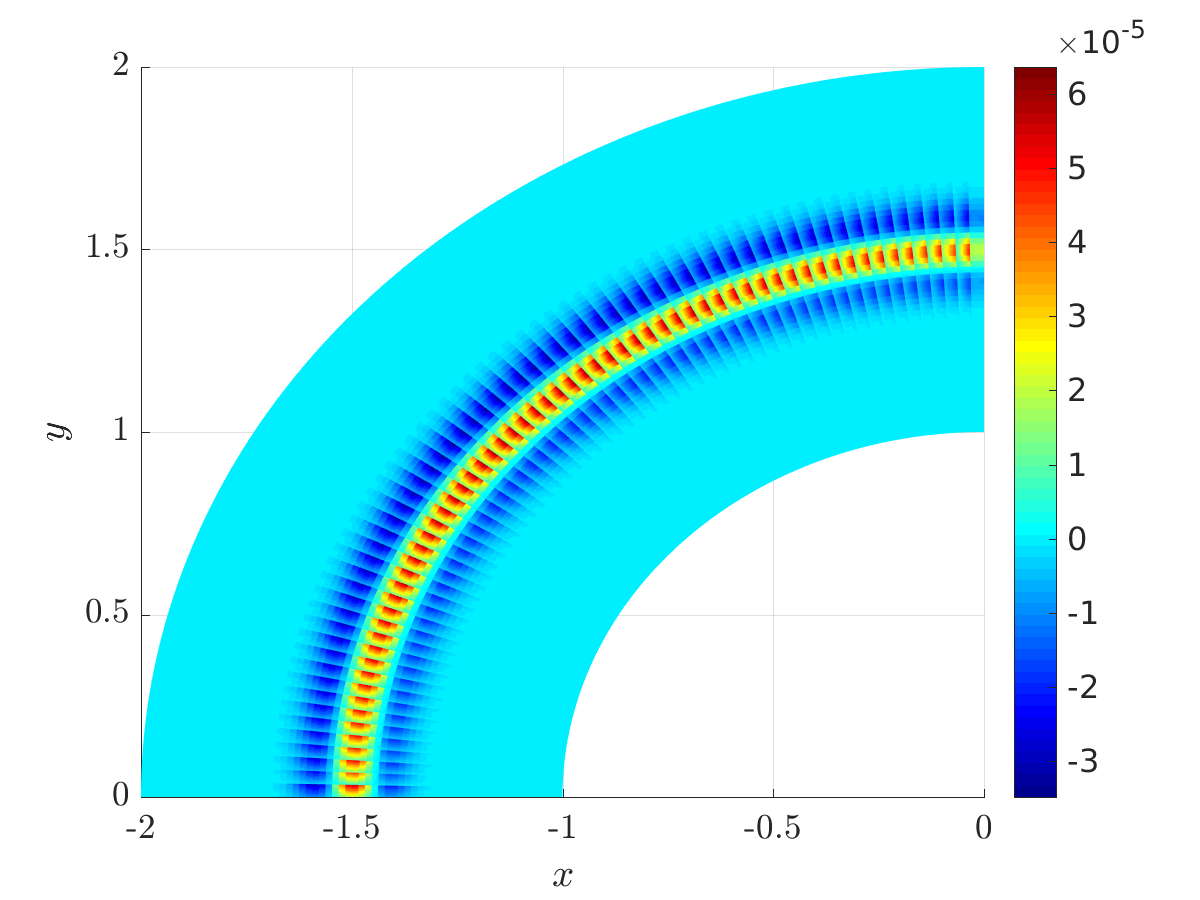}}
    \caption{Coarse-scale (top) and subscale (bottom) solutions for the advection in a quarter-annulus problem for $p = 2$, $p_f = 1$, and a NURBS mesh of $64 \times 64$ elements.}
    \label{fig:annulus_isometric}
  \end{center}
\end{figure}

\subsection{Unsteady advection of a Gaussian hill}

We now consider an unsteady scalar transport problem, namely the advection of a Gaussian hill in a rotating flow field.  For this problem, the domain is $\Omega = [0, 1] \times [0, 1]$ and the flow field is given as $\bm{a} = \left( y- 0.5, x - 0.5 \right)$.  The initial scalar field is given as:
\begin{equation}\label{eq:gaussian_ic}
u\left(x,y,0\right) = \exp\left(-\frac{1}{2 \sigma} \left( \left(x - 0.5 \right)^2 + \left(y - 0.25 \right)^2 \right) \right),
\end{equation}
where $\sigma = 0.065$, and Dirichlet boundary conditions are strongly enforced along the entire boundary of the domain.  Numerical solutions are calculated on a $64 \times 64$ NURBS mesh with coarse-scale polynomial degree $p = 2$ and subscale polynomial degree $p_f = 1$.  Both the dynamic and quasi-static discontinuous subscale models were employed to stabilize the solution.  Time-integration is carried out with the generalized-alpha method using a time step of $\Delta t = 0.005$ and $\rho_{\infty} = 1$ such that $\alpha_m = 0.5$, $\alpha_f = 0.5$, and $\gamma = 0.5$.  The small time step is chosen such that the only source of error is spatial error.

Establishing the initial condition for this problem takes some care, and the approach differs between the dynamic and quasi-static models.  For the quasi-static model, we do not require a time-history of the discontinuous subscale solution.  Thus, only an initial condition for the coarse-scale solution field is required, and this is established through an $L^2$-projection of the exact initial condition.  For the dynamic model, a time-history of the discontinuous subscale solution is required.  Consequently, after an initial condition for the coarse-scale solution field is obtained, an initial condition for the discontinuous subscale solution is established through an $L^2$-projection of the difference between the exact initial condition and the coarse-scale initial condition.

\subsubsection{Results for the Quasi-static Discontinuous Subscale Model}

We first display results obtained using the quasi-static discontinuous subscale model.  In Fig. \ref{fig:gaussian_centerline}, the concentration along the centerline of the hill (found at $y = 0.25$) is plotted for the initial solution and the solution after 1, 5, and 10 full revolutions ($t = 6.28, 31.40, 62.80$, respectively).  It is clear that the method is able to preserve the hill with very high accuracy even after several rotations.  After one rotation, the peak is reduced to $0.997$, representing a numerical dispersion of the peak of 0.3\%.  After ten rotations, the peak is reduced to $0.969$, representing a numerical dispersion of the peak of 3.1\%.  There is also some slight skewing of the solution in the downwind direction.

\subsubsection{Results for the Dynamic Discontinuous Subscale Model}

We next display results obtained using the dynamic discontinuous subscale model.
In Fig. \ref{fig:gaussian_centerline}, the concentration along the centerline of the hill (found at $y = 0.25$) is plotted for the initial solution and the solution after 1, 5, and 10 full revolutions ($t = 6.28, 31.40, 62.80$, respectively).  Like the quasi-static model, it is clear that the method is superb in preserving the hill.  After one rotation, the peak is reduced to $0.996$, representing a numerical dispersion of the peak of 0.4\%.  After ten rotations, the peak is reduced to $0.965$, representing a numerical dispersion of the peak of 3.5\%.  Again, like the quasi-static model, there is some slight skewing of the solution in the downwind direction.  Curiously, the magnitude of the subscale solution is three orders less for the dynamic model than the quasi-static model, but the two models produce very similar results both quantitatively and qualitatively.

\begin{figure}[t]
  \begin{center}
      \subfigure[Coarse-scale solution for the quasi-static model]{\includegraphics[width=8cm]{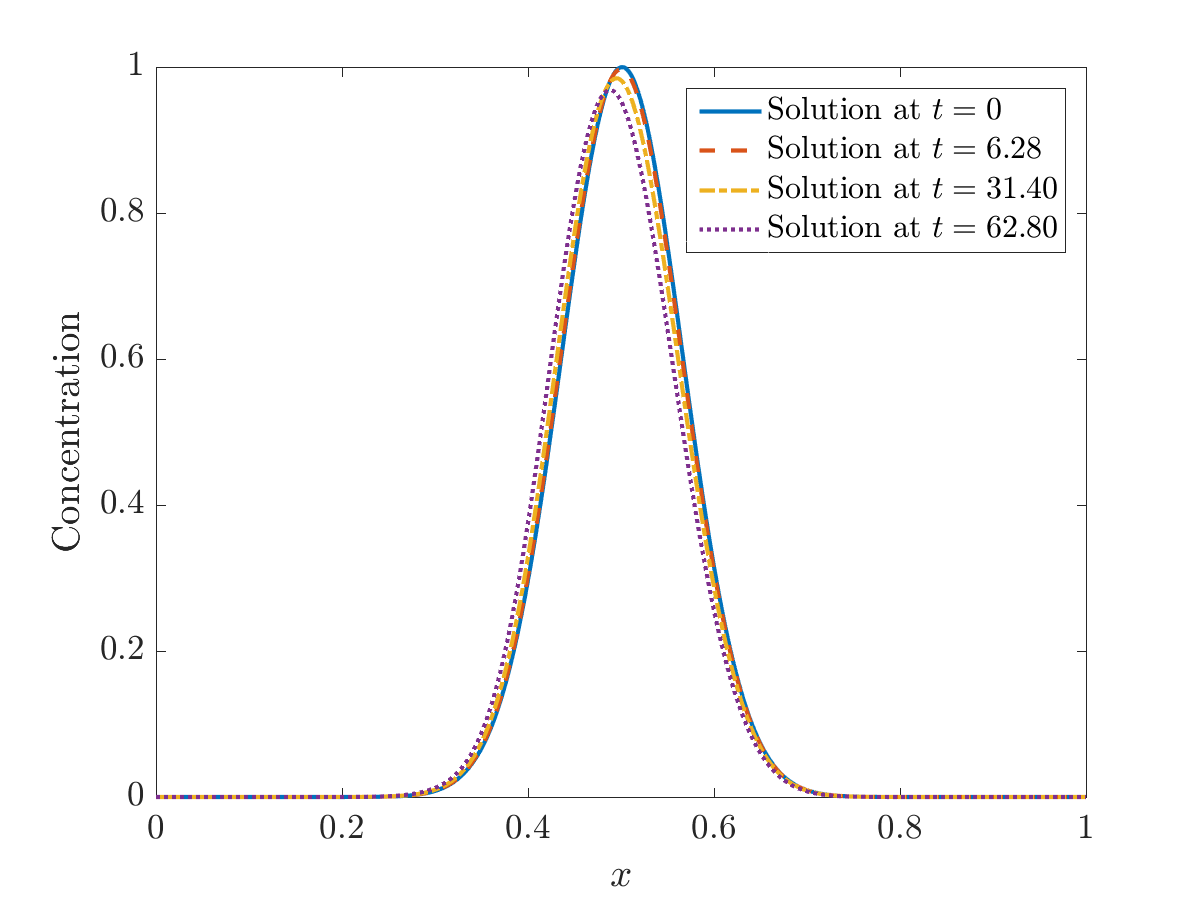}}
      \subfigure[Coarse-scale solution for the dynamic model]{\includegraphics[width=8cm]{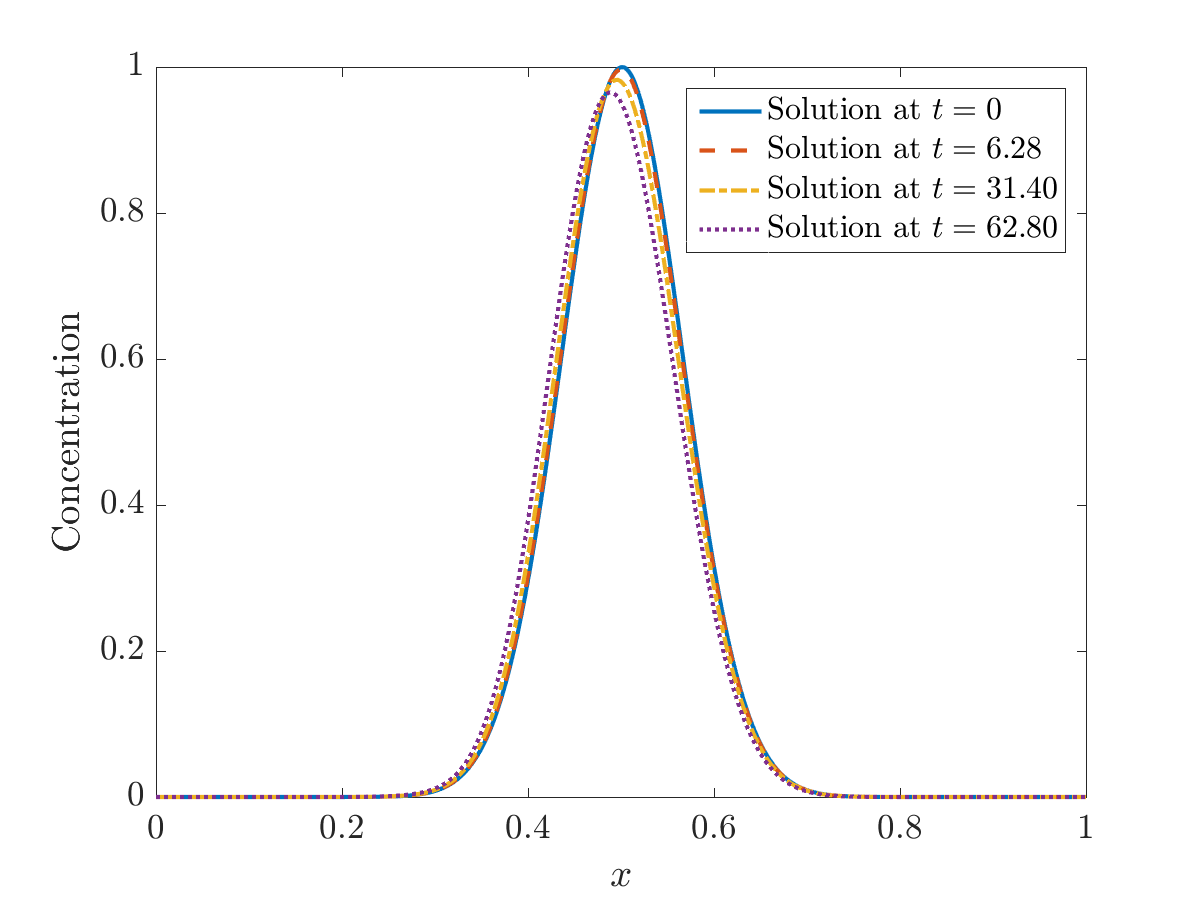}}
	\caption{Coarse-scale solution for the unsteady advection of a Gaussian hill problem along $y = 0.25$ for $p = 2$, $p_f = 1$, and $h = 1/64$.}
	\label{fig:gaussian_centerline}
  \end{center}
\end{figure}

\subsection{Unsteady advancing front}

We finish by returning back to the advection skew to the mesh problem but in an unsteady context.  Namely, we initialize the scalar field to be zero everywhere except at the Dirichlet boundary where the initial condition meets the specified boundary condition.  We then let the solution evolve according to the problem specifications in Fig. \ref{fig:skew_setup}.  Since we expect a boundary layer to form for this problem, we elect to enforce the Dirichlet conditions in a weak fashion.  For all our numerical results, a uniform NURBS mesh with $64 \times 64$ square elements was employed with 
with a coarse-scale polynomial degree of $p = 2$ and a subscale polynomial degree of $p_f = 1$.  Time-integration is carried out using the generalized-alpha method using a time step of $\Delta t = 0.01$ and $\rho_{\infty} = 1$ such that $\alpha_m = 0.5$, $\alpha_f = 0.5$, and $\gamma = 0.5$.  We compute and compare solutions using both the dynamic and quasi-static discontinuous subscale models.

\subsubsection{Results for the Quasi-static Discontinuous Subscale Model}

Solutions for the advancing front problem solved with the quasi-static discontinuous subscale model at representative time steps of $t = 0, 0.51, 2.01$ are shown in Figs. \ref{fig:advancingfront_coarse} and \ref{fig:advancingfront_fine}.  The coarse-scale solution converges to the steady-state solution as expected.  The coarse-scale solution is quite accurate in the eyeball norm, though at each time step, minor oscillations are present near the internal layer similar to what was observed in the steady skew to the mesh problem.  Additionally, the magnitude of the oscillations in the fine-scale solution at the final time-step correspond to roughly the same magnitude of the oscillations observed in the steady problem.  This is expected, since we expect the steady state subscale solution to coincide with the subscale solution in the steady problem.

\subsubsection{Results for the Dynamic Discontinuous Subscale Model}

Solutions for the advancing front problem solved with the dynamic discontinuous subscale model at representative time steps of $t = 0, 0.51, 2.01$ are shown in Figs. \ref{fig:advancingfront_coarse} and \ref{fig:advancingfront_fine}.  Again, the coarse-scale solution is quite accurate and converges to the steady-state solution as expected, and minor oscillations are present near the internal layer similar to what was observed in the steady skew to the mesh problem.  Surprisingly, the magnitude of the subscale solution is much smaller than the subscale solution computed using the quasi-static discontinuous subscale model at intermediate times, yet the obtained numerical results for the coarse-scale solution field are qualitatively and quantitatively similar.  The coarse-scale solution field for the dynamic model does exhibit fewer oscillations than the coarse-scale field for the quasi-static model, particularly away from the boundary layer (e.g., near the spatial location $(x,y) = (1,1)$ at time $t = 2.01$).

\begin{figure}[t]
  \begin{center}
      \subfigure[Quasi-static model result at $t=0$]{\includegraphics[width=8cm]{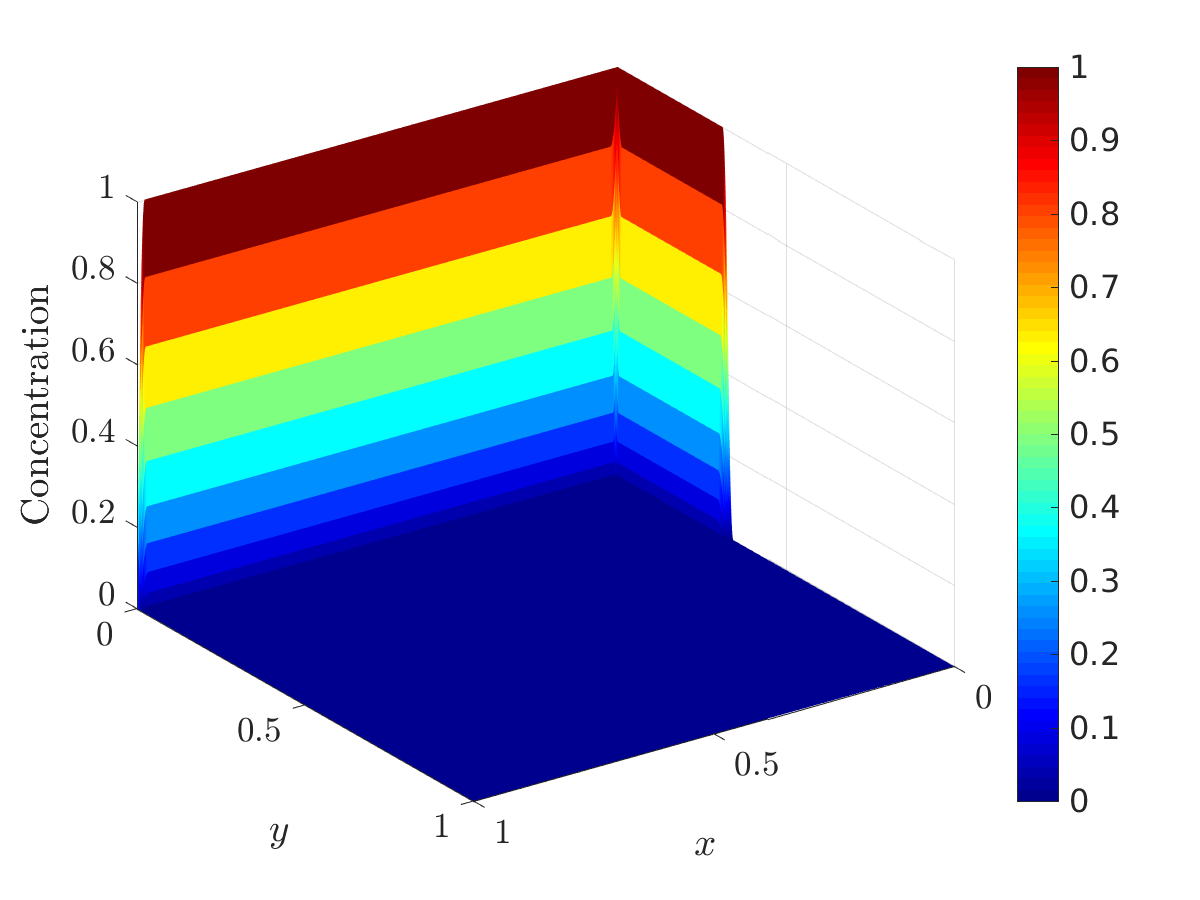}}
      \subfigure[Dynamic model result at $t=0$]{\includegraphics[width=8cm]{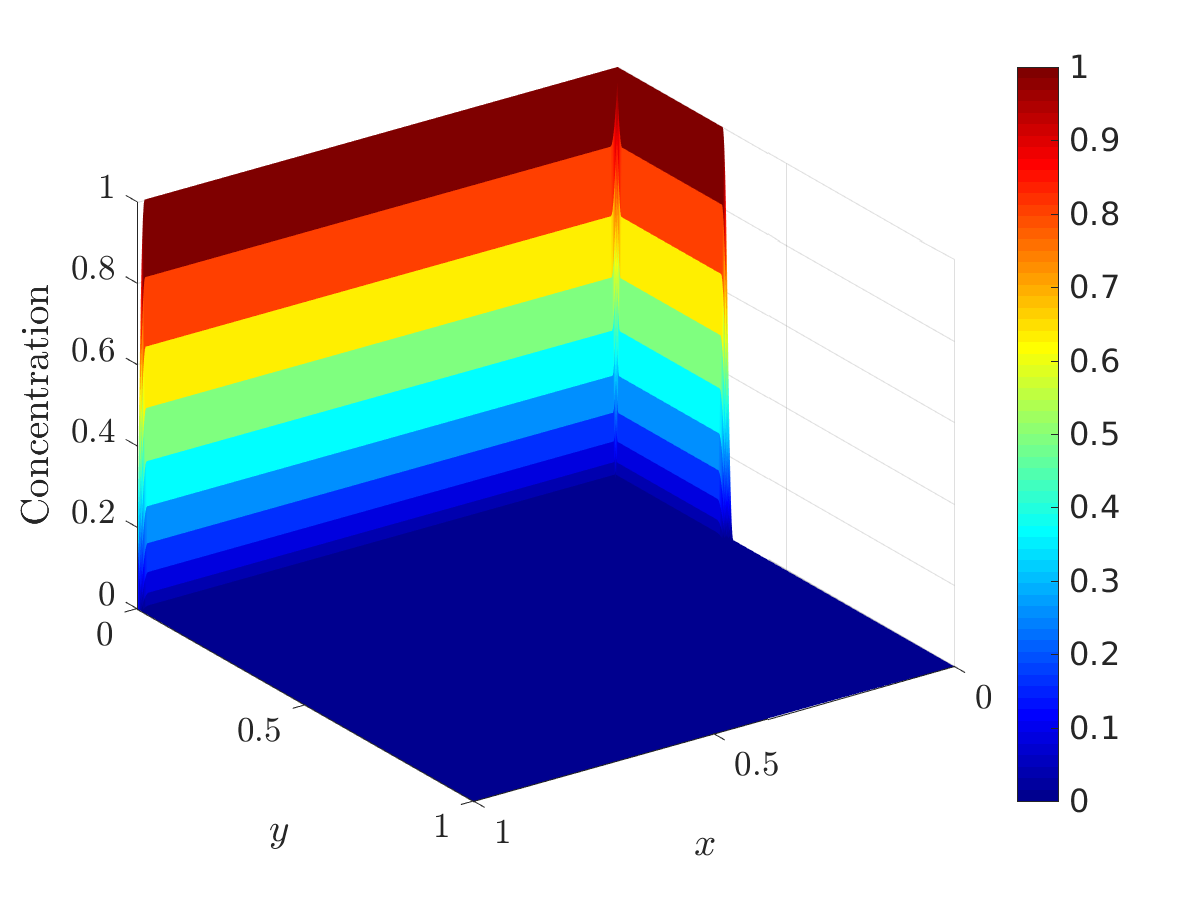}}
      \subfigure[Quasi-static model result at $t=0.51$]{\includegraphics[width=8cm]{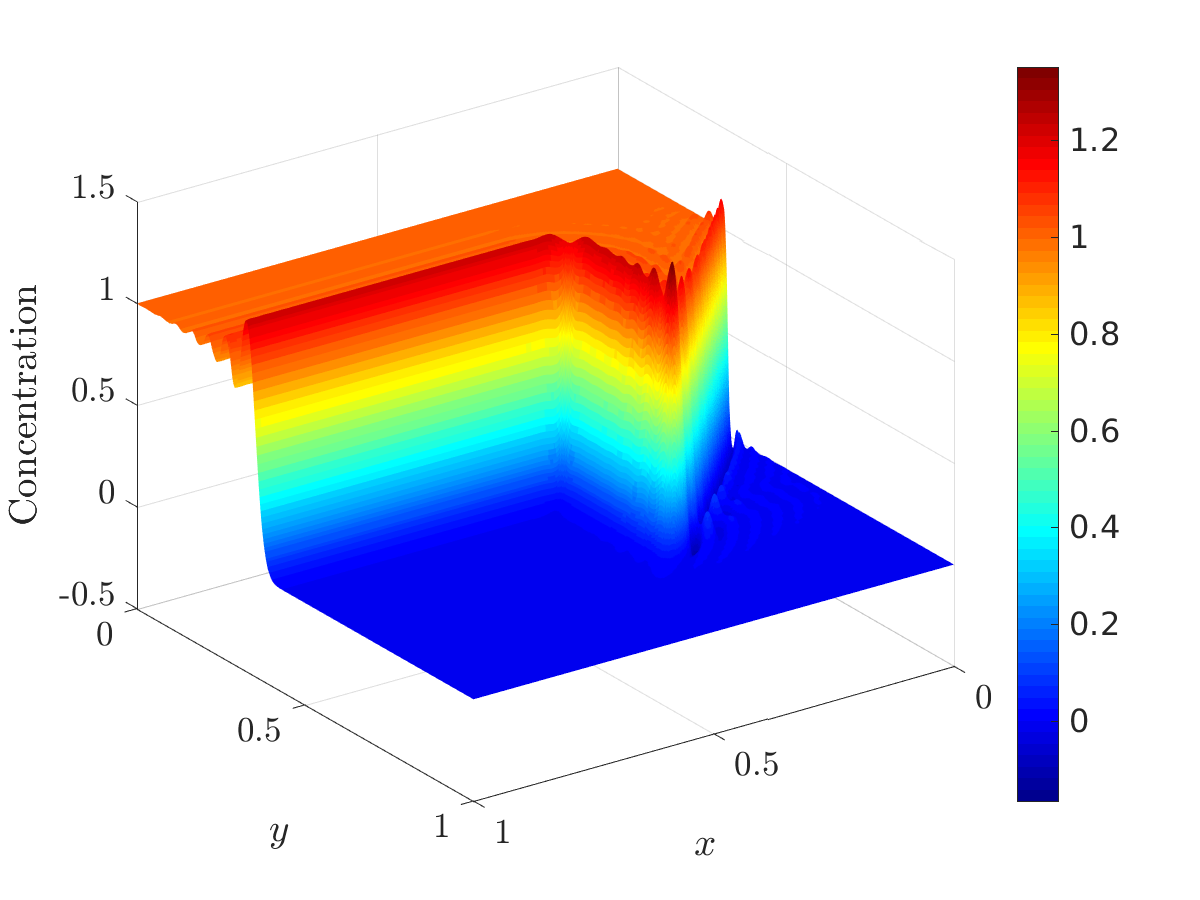}}
      \subfigure[Dynamic model result at $t=0.51$]{\includegraphics[width=8cm]{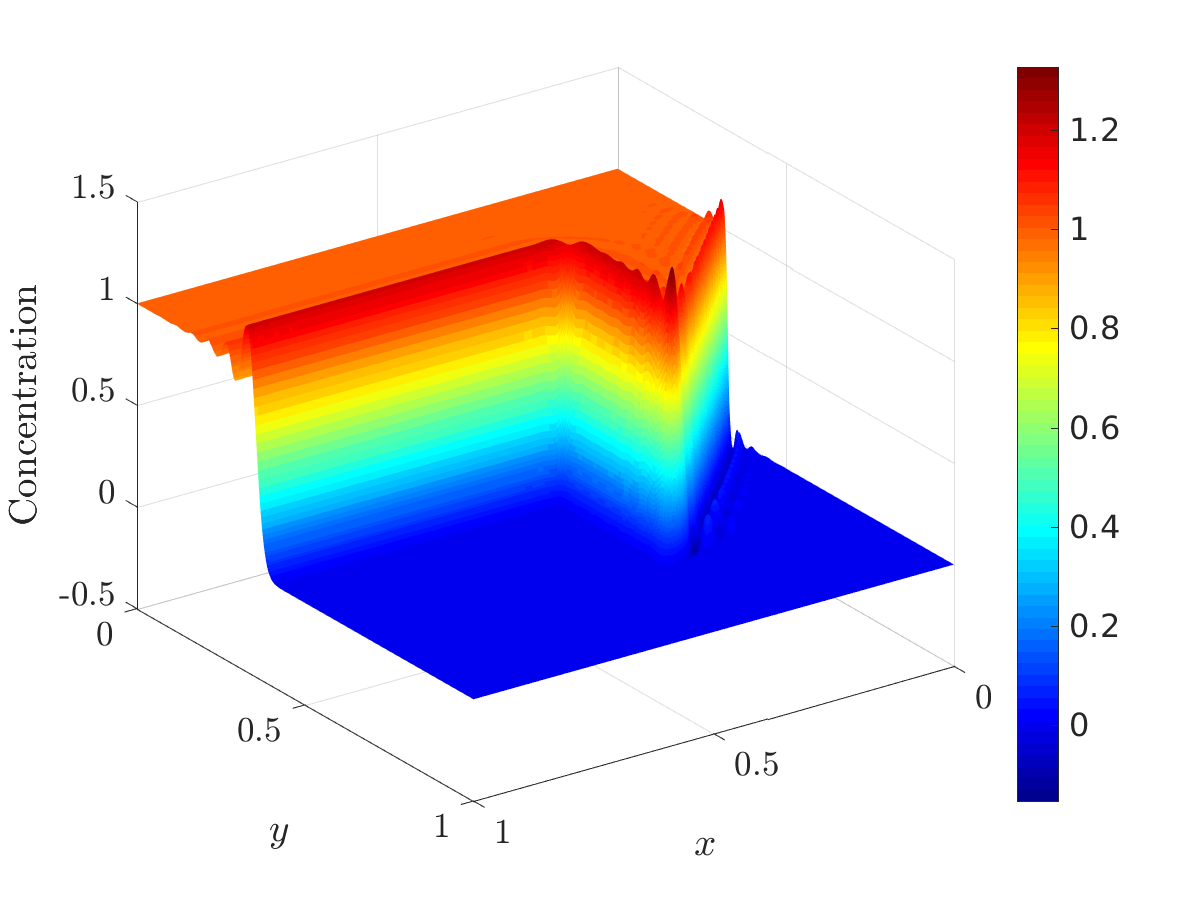}}
      \subfigure[Quasi-static model result at $t=2.01$]{\includegraphics[width=8cm]{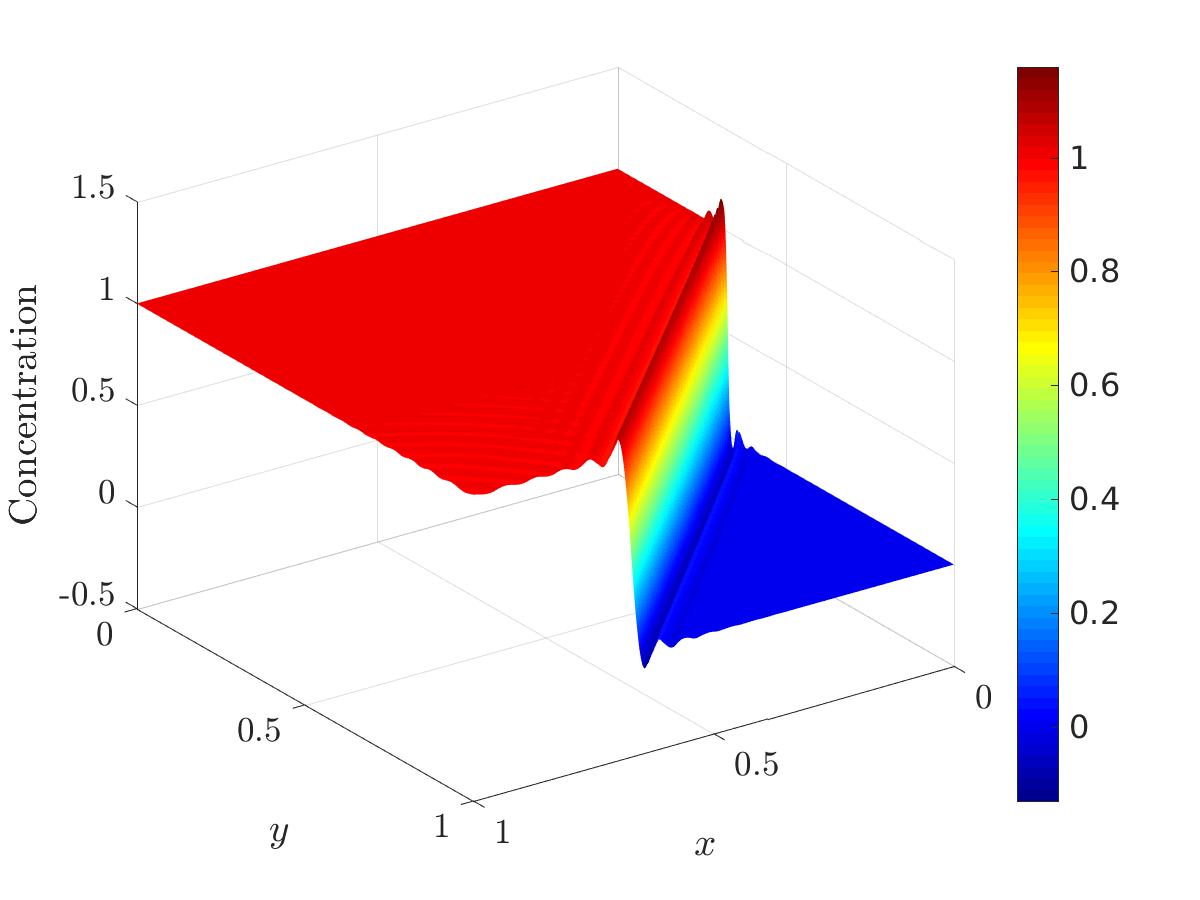}}
      \subfigure[Dynamic model result at $t=2.01$]{\includegraphics[width=8cm]{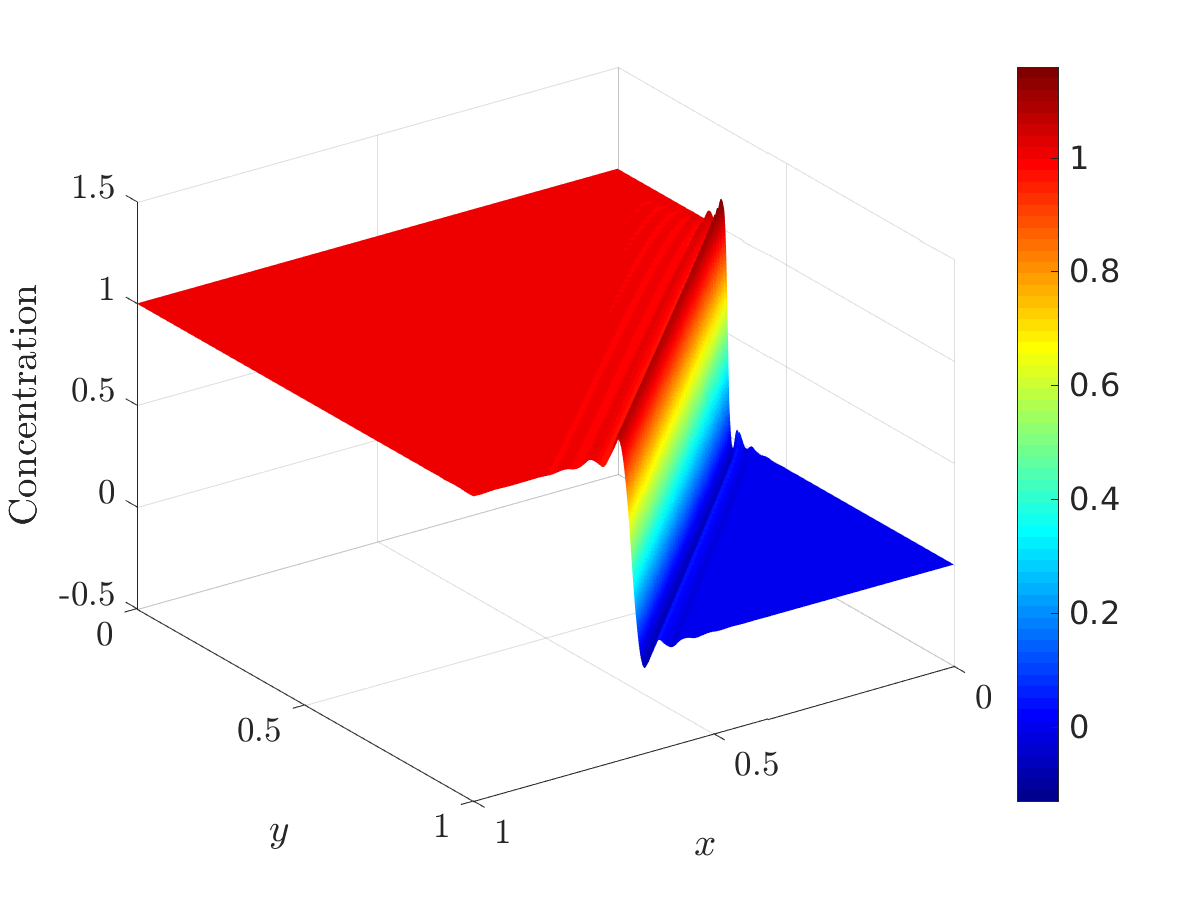}}
	\caption{Coarse-scale solutions for the unsteady advancing front problem for $p = 2$, $p_f = 1$, and $h = 1/64$ at $t = 0, 0.51, 2.01$.}
	\label{fig:advancingfront_coarse}
  \end{center}
\end{figure}

\begin{figure}[t]
  \begin{center}
      \subfigure[Quasi-static model result at $t=0$]{\includegraphics[width=8cm]{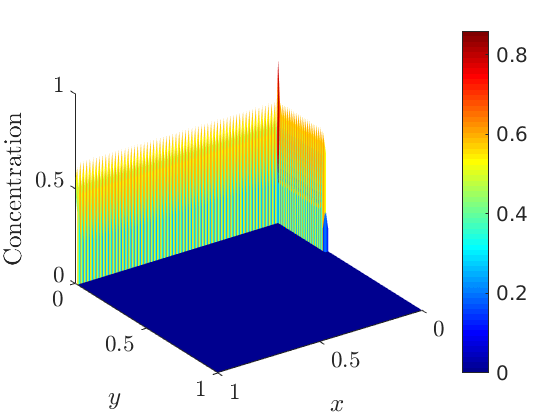}}
      \subfigure[Dynamic model result at $t=0$]{\includegraphics[width=8cm]{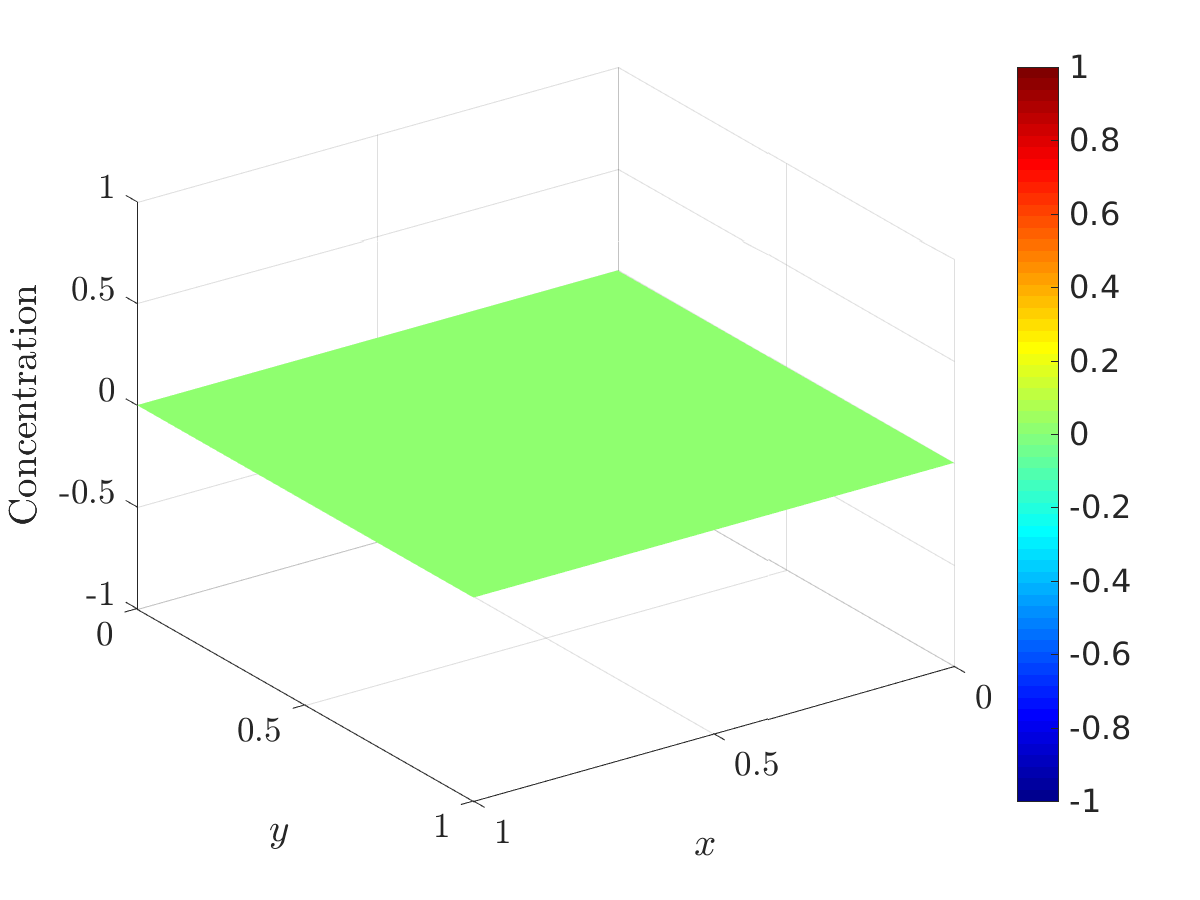}}
      \subfigure[Quasi-static model result at $t=0.51$]{\includegraphics[width=8cm]{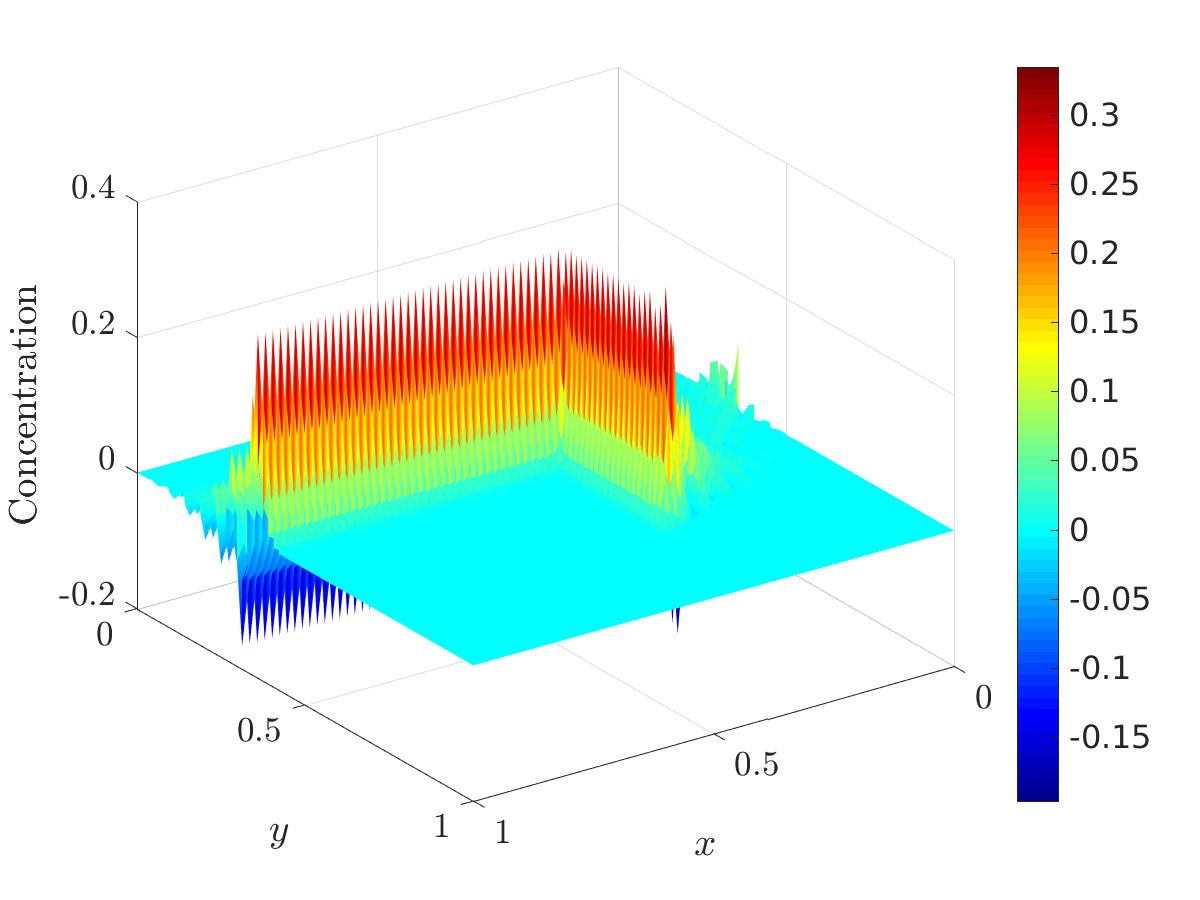}}
      \subfigure[Dynamic model result at $t=0.51$]{\includegraphics[width=8cm]{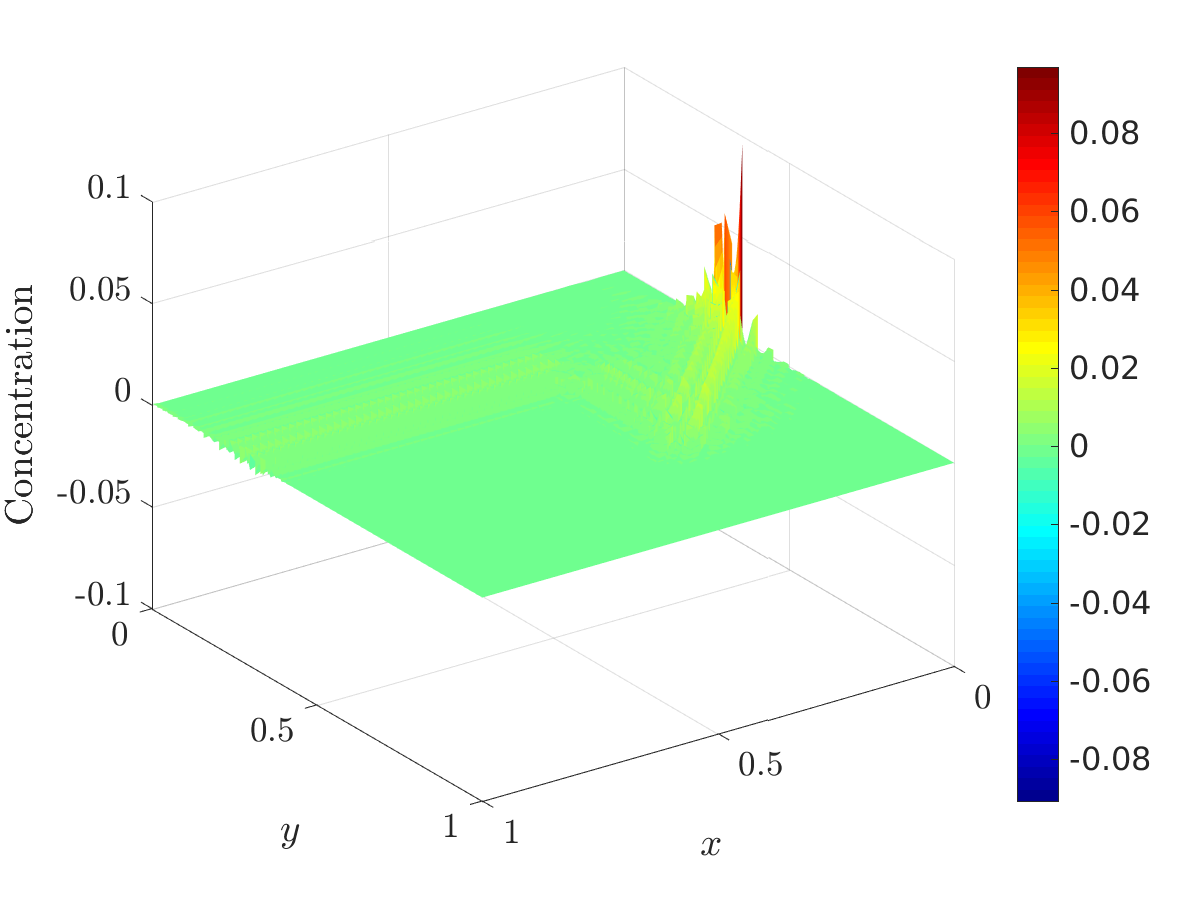}}
      \subfigure[Quasi-static model result at $t=2.01$]{\includegraphics[width=8cm]{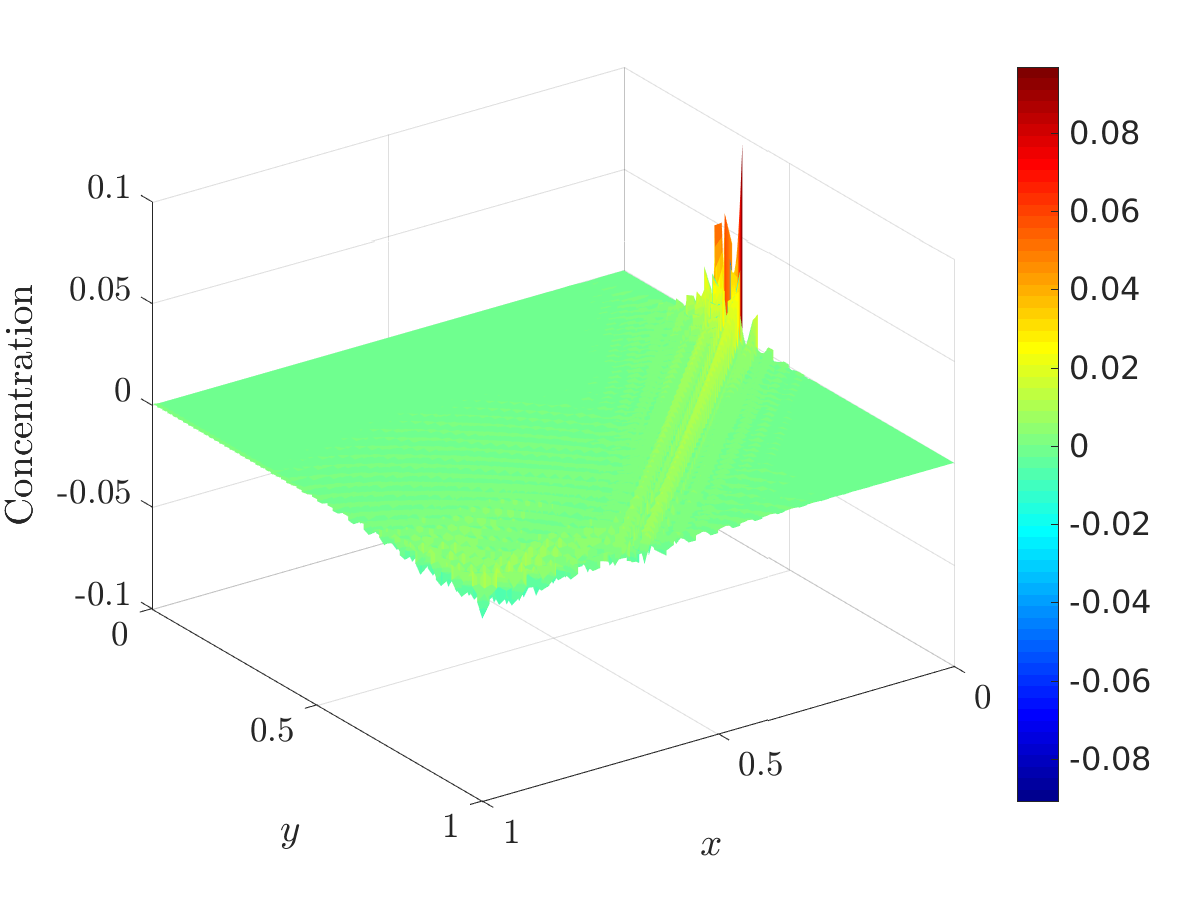}}
      \subfigure[Dynamic model result at $t=2.01$]{\includegraphics[width=8cm]{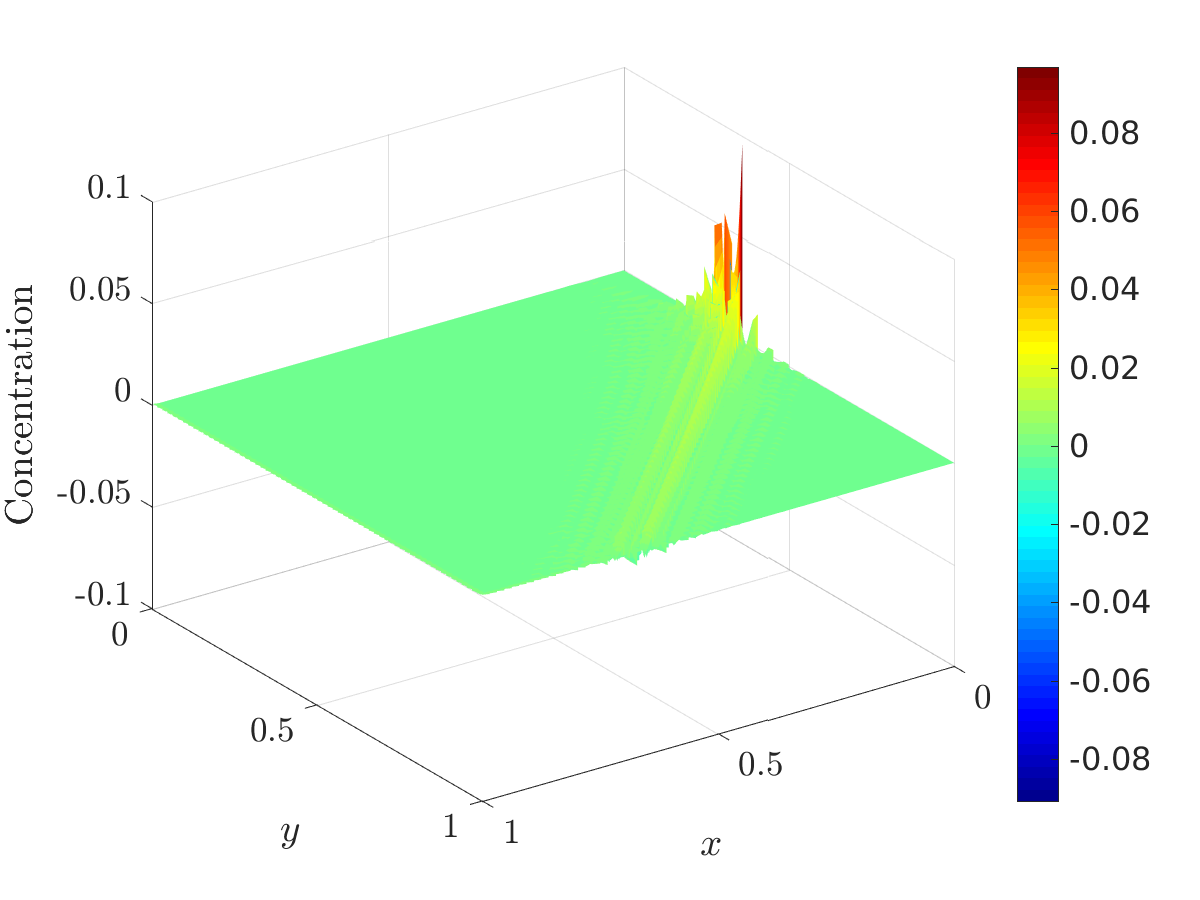}}
	\caption{Subscale solutions for the unsteady advancing front problem for $p = 2$, $p_f = 1$, and $h = 1/64$ at $t = 0, 0.51, 2.01$.}
	\label{fig:advancingfront_fine}
  \end{center}
\end{figure}

\section{Conclusions}

In this paper, we examined a variational multiscale method, which we refer to as the method of discontinuous subscales, that is based on approximating the residual-free bubbles using a discontinuous Galerkin formulation.  We established stability and convergence results for the methodology and demonstrated its applicability to scalar transport problems through a collection of numerical examples.  We demonstrated that the method is accurate, displaying optimal convergence rates with respect to mesh refinement, and stable in the advective-limit.  We also demonstrated that, somewhat surprisingly, lowest-order discontinuous subscale approximations are sufficient to stabilize high-order coarse-scale approximations in the context of isogeometric analysis.

There are two main avenues of research that we plan to pursue in future work.  The first avenue is the design of discontinuous subscale approaches which yield coarse-scale solutions satisfying positivity and monotonicity constraints.  This can be achieved through the use of discontinuity capturing operators \cite{John07,John08} or by directly embedding the constraints within the variational multiscale framework \cite{Evans09}.  The second avenue of research we plan to pursue is the design of discontinuous subscale approaches for LES-type turbulence modeling. While lowest-order discontinuous subscale approximations were found to stabilize coarse-scale approximations in the context of scalar transport, we expect that such approximations will not be sufficient to yield accurate representations of the subgrid stress tensor for turbulent incompressible flows \cite{Wang10}.  We anticipate that improved LES turbulence models may be attained through adaptive refinement of the discontinuous subscale approximation space.

\section*{Acknowledgements}

This material is based upon work supported by the Air Force Office of Scientific Research under Grant No. FA9550-14-1-0113.  The authors would also like to acknowledge early conversations with Thomas J.R. Hughes and J. Austin Cottrell which motivated the subject of this paper.

\bibliographystyle{abbrv-diss}
\bibliography{references}

\end{document}